\newtheorem{thm}{Theorem}[section]
\newtheorem{lem}[thm]{Lemma}
\newtheorem{cor}[thm]{Corollary}
\newtheorem{prop}[thm]{Proposition}
\theoremstyle{remark}
\newtheorem{defn}[thm]{Definition}
\newtheorem{rem}[thm]{Remark}
\newtheorem{exa}[thm]{Example}
\newtheorem{notation}[thm]{Notation}
\newtheorem*{acknowledgement}{Acknowledgment}
\title{{\bf Fibers of Cyclic Covering Fibrations of a Ruled Surface}}
\author{Makoto Enokizono}
\subjclass[2010]{14D06}
\thanks{
	{\bf Keywords:}
	fibered surface, singular fiber, cyclic covering.
}
\address{Makoto Enokizono,
	Department of Mathematics,
	Graduate School of Science,
	Osaka University,
	Toyonaka, Osaka 560-0043, Japan}
\email{m-enokizono@cr.math.sci.osaka-u.ac.jp}
\begin{document}
\maketitle
\begin{abstract}
We give an algorithm to classify singular fibers of finite cyclic covering fibrations of a ruled surface by using singularity diagrams. As the first application, we classify all fibers of 3-cyclic covering fibrations of genus 4 of a ruled surface and show that the signature of a complex surface with this fibration is non-positive by computing the local signature for any fiber. As the second application, we classify all fibers of hyperelliptic fibrations of genus 3 into 12 types according to the Horikawa index. We also prove that finite cyclic covering fibrations of a ruled surface have no multiple fibers if the degree of the covering is greater than 3.
\end{abstract}

\section*{Introduction}
Let $f\colon S\rightarrow B$ be a surjective morphism from a complex smooth projective surface $S$ to a smooth projective curve $B$ with connected fibers. 
The datum $(S,f,B)$ or simply $f$ is called a fibered surface or a fibration.
A fibered surface $f$ is said to be {\it relatively minimal} if there exist no $(-1)$-curves contained in a fiber of $f$. 
The genus $g$ of a fibered surface is defined to be the genus of a general fiber of $f$.

In the study of fibered surfaces, one of the central problem is the classification of singular fibers.
Any relatively minimal fibration of genus $0$ is a holomorphic $\mathbb{P}^1$-bundle (hence, no singular fibers). 
As is well known, all fibers of elliptic surfaces were classified by Kodaira in \cite{Ko}. 
As to fibrations of genus $2$, the complete list of singular fibers was obtained by Namikawa and Ueno in \cite{NamiUe}.
On the other hand, Horikawa \cite{Ho} showed that fibers of genus $2$ fibrations fall into $6$ types $(0)$, $({\rm I})$,\ldots,$({\rm V})$ according to the numerical invariant attached to singular fiber germs, which is nowadays called the Horikawa index (cf.\ \cite{ak}). 
When $g=3$, based on Matsumoto-Montesinos' theory, Ashikaga and Ishizaka \cite{AsIs} accomplished the topological classification with a vast list, which is comparable to Namikawa-Ueno's in genus $2$ case.

In \cite{enoki}, we studied primitive cyclic covering fibrations of type $(g,h,n)$.
Roughly speaking, it is a fibered surface of genus $g$ obtained as the relatively minimal model of an $n$ sheeted cyclic branched covering of another fibered surface of genus $h$.
Note that hyperelliptic fibrations are nothing more than such fibrations of type $(g,0,2)$.
Our main concern in \cite{enoki} was the slope of such fibrations of type $(g,h,n)$ and we established the lower bound of the slope for them.
Furthermore, we succeeded in giving even an upper bound.
In this paper, we are interested in singular fibers themselves appearing in primitive cyclic covering fibrations of type $(g,0,n)$, and give the complete lists of fibers when $(g,n)=(4,3), (3,2)$.

In \S1, we recall basic results in \cite{enoki} as the preliminaries.
In \S2, in order to extract detailed information from the singular points of the branch locus, we introduce the notion of {\em singularity diagrams} which is our main tool for studying fibers.
Though it enables us to handle all the possible fibers in theory, it is rather tedious in practice to carry it over for large $n$ and $g$.
In \S3, we consider the case where $n=3$ and $g=4$, and show the following:

\begin{thm}
Fibers of primitive cyclic covering fibrations of type $(4,0,3)$ fall into $32$ classes of types $(0_{i_1,\ldots,i_m})$, $({\rm I\hspace{-.1em}I\hspace{-.1em}I}_{i,j})$, $({\rm I\hspace{-.1em}V}_k)$, $({\rm V}_{i,j})$, $({\rm V\hspace{-.1em}I}_k)$, $({\rm V\hspace{-.1em}I\hspace{-.1em}I}_k)$ and $({\rm V\hspace{-.1em}I\hspace{-.1em}I\hspace{-.1em}I})$ listed in {\rm \S3} plus $9$ classes of types $({\rm I}_{i,j,l})$ and $({\rm I\hspace{-.1em}I}_{k,l})$ up to $(-2)$-curves.
\end{thm}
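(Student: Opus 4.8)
The plan is to reduce the classification to a finite, purely local enumeration governed by the singularity diagrams of \S2, and then to resolve and contract each admissible diagram in order to read off its fiber.

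First I would recall from \S1 the structure theorem: a primitive cyclic covering fibration of type $(4,0,3)$ arises, after a sequence of blow-ups on a relatively minimal ruled surface, as a triple cyclic cover $\theta\colon \tilde S\to \tilde W$ of a birationally ruled surface $\tilde W\to B$ branched along a reduced divisor $\tilde R\in|3\mathfrak d|$ whose singularities are negligible; the relatively minimal $S$ is then recovered by contracting the $(-1)$-curves contained in fibers. Riemann--Hurwitz forces the horizontal part of $\tilde R$ to meet a general fiber of $\tilde W\to B$ in exactly $6$ points, the $6$ branch points of the genus-$4$ triple cover of $\mathbb P^1$. Consequently a fiber of $f\colon S\to B$ is singular precisely when $\tilde R$ is non-generic over the corresponding point $p\in B$, and the fiber is determined by the local data of the pair $(\tilde R,\Gamma_p)$, where $\Gamma_p$ is the central fiber of $\tilde W\to B$.

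Next I would isolate the local problem. Over a small disk about $p$, the surface $\tilde W$ is a $\mathbb P^1$-bundle and the singular fiber is encoded by the multiplicities of $\tilde R$ at the points of $\Gamma_p$ and at their infinitely near points (each taken modulo $3$, since only the residue mod $3$ affects the branching of a triple cover), together with the information of whether all or part of $\Gamma_p$ is absorbed into the branch locus. This is exactly the data packaged by a singularity diagram. The enumeration is then constrained by the fiber-degree $6$ of the horizontal branch locus and by the compatibility of multiplicities modulo $3$. I would organize the cases by the number and multiplicities of the singular points lying on $\Gamma_p$, by the blow-up tower of their infinitely near points, and by the presence or absence of vertical branch components. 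For each admissible diagram I would apply the machinery of \S2 mechanically: blow up $\tilde W$ according to the diagram, take the triple cover, resolve its cyclic quotient singularities, and contract the vertical $(-1)$-curves to reach $S$; the dual graph and the self-intersection numbers of the resulting fiber can be read directly off the diagram. Grouping the outcomes yields the named types. I expect that a family of diagrams whose branch-curve singularities have multiplicity $\equiv 0 \pmod 3$, or for which the final contraction is non-unique, will produce fibers differing from one another only by chains and trees of $(-2)$-curves; these are precisely the classes $(\mathrm{I}_{i,j,l})$ and $(\mathrm{II}_{k,l})$ recorded ``up to $(-2)$-curves,'' accounting for the extra $9$ classes, while the remaining rigid diagrams give the $32$ classes.

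The main obstacle is twofold. First comes completeness: one must prove that the singularity diagrams genuinely exhaust all local configurations and that inequivalent diagrams yield inequivalent fibers, or else collapse to one another in the predictable ``up to $(-2)$-curves'' manner. Second, the resolution computations for the deepest singularities --- those with long towers of infinitely near points --- are lengthy, and the careful bookkeeping of multiplicities modulo $3$ together with the $(-2)$-curve decorations produced by the canonical resolution is where the argument becomes most laborious and error-prone.
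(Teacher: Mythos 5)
Your overall strategy --- reduce the fiber germ to the singularity-diagram data of \S 2, enumerate the admissible diagrams and their sequences, then reconstruct each fiber by blowing up $W$, taking the triple cover and contracting the vertical $(-1)$-curves --- is exactly the route the paper follows in \S 3 (steps (i)--(iii)). However, two points in your sketch are genuine gaps rather than deferred labor. First, your enumeration is not finite, and not free of double counting, in the form you set it up. The constraints $\widetilde{R}_h\widetilde{\Gamma}=6$ and $m\in 3\mathbb{Z}\cup(3\mathbb{Z}+1)$ still allow points of multiplicity $4$ or $6$ on $\Gamma_p$, and the same fiber germ arises from many different relatively minimal models $W$ of $\widetilde{W}$. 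The paper first normalizes $W$ by elementary transformations (Lemma \ref{eltrlem}) so that $\mathrm{mult}_x(R_h)\le r/2=3$; only after that does step (i) yield the short list of first-level diagrams with $m^1_{i,1}\le 3+k$, with each fiber counted once. Without this normalization you must separately show that the extra diagrams reproduce fibers already on the list. Relatedly, your parenthetical that ``only the residue mod $3$ affects the branching'' is false as stated: the residue mod $3$ only decides whether an exceptional curve enters the branch locus (Lemma \ref{multlem}); the actual multiplicities, together with the full tower of infinitely near points, determine the diagram and hence the fiber (a sextuple point and a triple point of $R$ on $\Gamma_p$ give different fibers of type $(0_{i_1,\ldots,i_m})$).

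Second, your account of the nine classes carrying the index $l$ misidentifies their source. They do not come from non-uniqueness of the final contraction: since $g=4\ge 2$ the relatively minimal model $S$ is unique, and $\rho$ contracts precisely the $(-1)$-curves lying in $\mathrm{Fix}(\widetilde{\sigma})$. The index $l$ arises because admissible sequences can be arbitrarily long: a type-$0$ diagram consisting of a single entry of multiplicity $3$ can fork to another such diagram indefinitely (cases (1) and (5) of step (iii)), and each step of such a tower of infinitely near triple points contributes a $(-2)$-curve (resp.\ a $2$-fold $(-2)$-curve) to the chain joining $I$ to $J$ in $({\rm I}_{i,j,l})$ (resp.\ meeting $K$ in $({\rm I\hspace{-.1em}I}_{k,l})$). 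To arrive at the stated count of $32+9$ classes you must show that these towers are the \emph{only} source of unboundedness --- for instance the tower $3/4/4$ producing type $({\rm V\hspace{-.1em}I\hspace{-.1em}I\hspace{-.1em}I})$ terminates --- and then actually carry out the covering-and-contraction computation for every remaining finite sequence, which is where the content of the theorem lies.
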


\begin{cor}
Let $f\colon S\to B$ be a primitive cyclic covering fibration of type $(4,0,3)$.
Then we have
$$
K_f^2=\frac{24}{7}\chi_f+\mathrm{Ind}
$$
and $\mathrm{Ind}$ is given by
\begin{align*}
{\rm Ind}=&\sum_{l\ge 0} \frac{3}{7}(l+1)\nu({\rm I}_{*,*,l})+\sum_{l\ge 0} \frac{3}{7}(l+2)\nu({\rm I\hspace{-.1em}I}_{*,l})+\frac{3}{7}\nu({\rm I\hspace{-.1em}I\hspace{-.1em}I}_{*,*})+\frac{16}{7}\nu({\rm I\hspace{-.1em}V}_{*}) \\
&+\frac{16}{7}\nu({\rm V}_{*,*})+\frac{16}{7}\nu({\rm V\hspace{-.1em}I}_{*,*})+\frac{26}{7}\nu({\rm V\hspace{-.1em}I\hspace{-.1em}I}_{*})+\frac{33}{7}\nu({\rm V\hspace{-.1em}I\hspace{-.1em}I\hspace{-.1em}I}) \\
\end{align*}
where $\nu(*)$ denotes the number of fibers of type $(*)$ and $\nu({\rm I}_{*,*,l}):=\sum_{i,j}\nu({\rm I}_{i,j,l})$, etc.
\end{cor}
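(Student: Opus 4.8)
The plan is to localize the stated identity fiber by fiber, writing $\mathrm{Ind}=\sum_{F}\mathrm{Ind}(F)$, where the sum runs over the singular fibers and each local index is determined by the singularity diagram of $F$ introduced in \S2. The starting point is the global expression for $K_f^2$ and $\chi_f$ recalled in \S1 from \cite{enoki}: for a primitive cyclic covering fibration of type $(4,0,3)$ these two invariants are governed by the numerical data of the (resolved) branch divisor $R$ on the $\mathbb{P}^1$-bundle $W\to B$, namely by $R^2$, $R\cdot K_W$, and the correction terms produced at each singular point of $R$ when one blows up $W$ so as to make the triple cyclic cover smooth. I would first separate this expression into a \emph{horizontal} part, coming from a branch divisor that meets the general fiber transversally in the expected number of points, and a sum of purely local terms concentrated over the points of $B$ lying under the singularities of $R$.

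Next I would verify that the horizontal part alone yields exactly $K_f^2=\tfrac{24}{7}\chi_f$; this fixes the slope constant $\tfrac{24}{7}$ and shows that every deviation is captured by the local terms, so that $\mathrm{Ind}(F):=(K_f^2)_F-\tfrac{24}{7}(\chi_f)_F$ is well defined and vanishes on a general fiber. The heart of the argument is then the case-by-case computation: for each of the $32+9$ fiber classes produced by the Theorem I would read off from its singularity diagram the sequence of blow-ups resolving $R$ over the corresponding point of $B$, compute the induced contributions to $K_f^2$ and to $\chi_f$, and form $\mathrm{Ind}(F)$. One expects, and must check, that these contributions are insensitive both to the indices $i,j,k$ and to the $(-2)$-curves permitted in the statement of the Theorem, since adjoining $(-2)$-curves alters neither $K_f^2$ nor $\chi_f$; this is precisely why the tabulated coefficients depend only on the coarse type and, for types $({\rm I})$ and $({\rm II})$, on the single parameter $l$. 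Summing the resulting values over all fibers yields the displayed formula for $\mathrm{Ind}$.

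The main obstacle is the bookkeeping in this local computation, especially for the infinite families $({\rm I}_{*,*,l})$ and $({\rm I\hspace{-.1em}I}_{*,l})$, where I must show that $\mathrm{Ind}(F)$ grows linearly in $l$ with slope $\tfrac{3}{7}$, and for the heavier sporadic types $({\rm V\hspace{-.1em}I\hspace{-.1em}I})$ and $({\rm V\hspace{-.1em}I\hspace{-.1em}I\hspace{-.1em}I})$, where the resolution of the branch locus is most intricate. Keeping track of how each infinitely near singular point of $R$ contributes to the relative canonical class after passing to the triple cover and then to the relatively minimal model, while correctly discarding the contributions absorbed by $(-2)$-curves, is the delicate point; once the local indices are tabulated, the Corollary follows by summation together with the identification of the slope as $\tfrac{24}{7}$.
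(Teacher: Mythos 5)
Your proposal is correct in substance but takes a noticeably heavier route than the paper. The paper does not re-derive the localization of $K_f^2-\tfrac{24}{7}\chi_f$: it simply specializes the slope equality already recalled in \S1 (Theorem~\ref{slopeeq}, proved in \cite{enoki}) to $(g,h,n)=(4,0,3)$, where $r=6$ forces $k=1$ to be the only contributing index and the coefficient evaluates to $3\bigl(\tfrac{4\cdot 2\cdot 3}{5\cdot 6-9}-1\bigr)=\tfrac{3}{7}$, so that $\mathrm{Ind}(F_p)=\tfrac{3}{7}\alpha_1(F_p)+\varepsilon(F_p)$. The corollary is then immediate from the classification tables of Theorem~\ref{classthm}, which already record $\alpha_1(F_p)$ and $\varepsilon(F_p)$ for every type (e.g.\ $\alpha_1=l+1$, $\varepsilon=0$ for $({\rm I}_{i,j,l})$ gives $\tfrac{3}{7}(l+1)$; $\alpha_1=4$, $\varepsilon=3$ for $({\rm V\hspace{-.1em}I\hspace{-.1em}I\hspace{-.1em}I})$ gives $\tfrac{33}{7}$). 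What you propose instead --- decomposing $K_f^2$ and $\chi_f$ via the branch-divisor numerology on $W$, verifying that the ``horizontal'' part accounts for the slope $\tfrac{24}{7}$, and computing the local contributions to both invariants separately for each of the $32+9$ classes --- essentially re-proves the slope equality of \cite{enoki} before doing the case analysis. That route would work and is self-contained, but it multiplies the bookkeeping: once the slope equality is invoked, the only data you need per fiber type are the two integers $\alpha_1$ and $\varepsilon$, and the linear growth in $l$ for types $({\rm I}_{*,*,l})$ and $({\rm I\hspace{-.1em}I}_{*,l})$ is just the statement $\alpha_1=l+1$ resp.\ $l+2$, not a delicate resolution computation.
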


Recall that Ueno and Xiao showed independently that the signature of a complex surface with a genus $2$ fibration is non-positive, answering affirmatively to a conjecture by Persson. 
If $S$ is a complex surface with a primitive cyclic covering fibration of type $(g,0,n)$, then, as shown in \cite{enoki}, the signature of $S$ can be expressed as the total sum of the local signature for fibers. 
We can compute the local signature for each type of fibers in the above theorem, and find that it is negative for any singular fiber. 
Therefore, we obtain the following:

\begin{cor}
The signature of a surface with a primitive cyclic covering fibration of type $(4,0,3)$ is not positive.
\end{cor}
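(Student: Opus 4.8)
The plan is to turn the global inequality into a finite list of local numerical checks, using that the signature decomposes as a sum of local signatures supported on the singular fibers. First I would combine the Hirzebruch signature theorem $\sigma(S)=\frac13(K_S^2-2e(S))$ with the relative Noether equality $12\chi_f=K_f^2+e_f$ to rewrite $\sigma(S)=\frac13(K_f^2-2e_f)$, where $e_f=\sum_F e_F$ with $e_F=e(F)-(2-2g)\ge 0$ the local Euler contribution of the fiber germ (vanishing exactly on smooth fibers). Feeding in the slope identity $K_f^2=\frac{24}{7}\chi_f+\mathrm{Ind}$ of the first Corollary and eliminating $\chi_f$ produces the localized expression
\begin{equation*}
\sigma(S)=\frac{7}{15}\,\mathrm{Ind}-\frac{8}{15}\,e_f=\frac{1}{15}\sum_{F}\bigl(7\,\mathrm{Ind}_F-8e_F\bigr).
\end{equation*}
Because $\mathrm{Ind}_F$ and $e_F$ are local and vanish on smooth fibers, this is precisely the decomposition $\sigma(S)=\sum_F\mathrm{Loc}_\sigma(F)$ of \cite{enoki}, with $\mathrm{Loc}_\sigma(F)=\frac{1}{15}(7\,\mathrm{Ind}_F-8e_F)$.

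Hence the corollary is equivalent to the strict inequality $7\,\mathrm{Ind}_F<8e_F$ for every singular fiber $F$ in the list of Theorem~1. The values $\mathrm{Ind}_F$ are read off the first Corollary, and the missing input is $e_F=e(F)+6$ (for $g=4$), which I would compute for each type from the fiber configuration encoded in its singularity diagram in \S3, adding the Euler numbers of the components with the usual corrections for intersection points and $(-2)$-curves. For the type $0$ fibers one has $\mathrm{Ind}_F=0$, so $\mathrm{Loc}_\sigma(F)=-\frac{8}{15}e_F\le 0$ automatically; for the finitely many exceptional types $\mathrm{III}_{*,*},\mathrm{IV}_*,\mathrm{V}_{*,*},\mathrm{VI}_{*,*},\mathrm{VII}_*,\mathrm{VIII}$ the inequality is a single arithmetic check once $e_F$ is known (e.g.\ type $\mathrm{VIII}$ has $7\,\mathrm{Ind}_F=33$, so one needs $e_F\ge 5$).

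The main obstacle is the two infinite families $\mathrm{I}_{i,j,l}$ and $\mathrm{II}_{k,l}$, where both sides grow with $l$: here $7\,\mathrm{Ind}_F=3(l+1)$ and $3(l+2)$ respectively, so I must bound $e_F$ uniformly in $l$ rather than case by case. I would handle this inductively, showing that each increment of $l$ corresponds to attaching one further $(-2)$-curve (or branch singularity) that raises $e_F$ by at least $1$ while raising $7\,\mathrm{Ind}_F$ by exactly $3$; since $8>3$, the inequality propagates from the base cases $l=0$ (where $e_F\ge 1$ suffices) to all $l$. Once these uniform estimates and the finitely many exceptional checks are in place, summing $\mathrm{Loc}_\sigma(F)<0$ over the singular fibers gives $\sigma(S)\le 0$, with equality only in the absence of singular fibers.
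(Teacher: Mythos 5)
Your strategy --- localize the signature over the fibers and verify non-positivity type by type --- is essentially the paper's, but you re-derive the localization instead of using the ready-made local signature of Corollary~\ref{signcor}. The paper's proof is a one-line inspection: it evaluates $\sigma(F_p)=-\tfrac{16}{15}\alpha_0(F_p)-\tfrac{7}{5}\alpha_1(F_p)+\tfrac{47}{15}\varepsilon(F_p)$ (the $n=3$, $r=6$ case of Corollary~\ref{signcor}) on the data $\alpha_0,\alpha_1,\varepsilon$ already recorded in the classification tables and observes that every entry is $\le 0$. Your route through ${\rm Sign}(S)=\frac13(K_f^2-2e_f)$, Noether, and the slope equality correctly yields ${\rm Sign}(S)=\frac{1}{15}\sum_F\bigl(7\,{\rm Ind}(F)-8e_F\bigr)$, the reduction to $7\,{\rm Ind}(F)\le 8e_F$ is sound, and so is the induction on $l$ for the two infinite families (each unit of $l$ in fact raises $e_F$ by $3$, since the extra $(-2)$-curve in the chain on $\widetilde{W}$ pulls back to three disjoint $(-2)$-curves upstairs, but your weaker bound $\Delta e_F\ge 1$ already suffices because $8>3$). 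The price of your packaging is that $e_F=e(F)+6$ is not tabulated anywhere in the paper and must be computed afresh from each configuration --- exactly the work that Corollary~\ref{signcor} has already absorbed by expressing $e_f$ through the singularity indices; also, your identification of $\frac{1}{15}(7\,{\rm Ind}(F)-8e_F)$ with the local signature of \cite{enoki} is not needed for the corollary (any localization with non-positive terms suffices) and is only justified once one knows $e_F$ is itself a function of $\alpha_k(F_p)$ and $\varepsilon(F_p)$. In short: correct approach with the same skeleton, with the local signature recast in Euler-number form at the cost of redoing the per-type topological computations.
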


In \S4, we turn our attention to hyperelliptic fibrations of genus $3$ (i.e., the case where $n=2$ and $g=3$). We classify all fibers into $12$ types $(0)$, $({\rm I})$,\ldots,$({\rm X\hspace{-.1em}I})$ according to the Horikawa index and show the following:

\begin{thm}
Let $f\colon S\rightarrow B$ be a relatively minimal hyperelliptic fibration of genus $3$. Then
$$
K_f^2=\frac{8}{3}\chi_f+{\rm Ind}
$$
and {\rm Ind} is given by 
\begin{eqnarray*}
{\rm Ind}&=&\sum_{i} \frac{2}{3}i\nu({\rm I}_{i,0,0})+\sum_{i,k\ge 1} \left(\frac{2}{3}i+\frac{5}{3}k-1\right)\nu({\rm I}_{i,0,k})+\sum_{i} \left(\frac{2}{3}i+\frac{5}{3}\right)\nu({\rm I}_{i,0,\infty})\\
&+&\sum_{i,j\ge 1,k\ge 1} \left(\frac{2}{3}i+\frac{5}{3}(j+k)-2\right)\nu({\rm I}_{i,j,k})+\sum_{i,j\ge 1} \left(\frac{2}{3}i+\frac{5}{3}j+\frac{2}{3}\right)\nu({\rm I}_{i,j,\infty})\\
&+&\sum_{i} \left(\frac{2}{3}i+\frac{10}{3}\right)\nu({\rm I}_{i,\infty,\infty})+\sum_{i,j,k} \left(\frac{2}{3}i+\frac{5}{3}(j+k)\right)\nu({\rm I\hspace{-.1em}I}_{i,j,k})\\
&+&\sum_{i,j} \left(\frac{2}{3}i+\frac{5}{3}j+\frac{8}{3}\right)\nu({\rm I\hspace{-.1em}I\hspace{-.1em}I}_{i,j})+\sum_{i,j} \left(\frac{2}{3}i+\frac{5}{3}j+\frac{4}{3}\right)\nu({\rm I\hspace{-.1em}V}_{i,j})\\
&+&\sum_{j} \left(\frac{5}{3}j+\frac{4}{3}\right)\nu({\rm V}_{j})+\sum_{j} \left(\frac{5}{3}j+\frac{5}{3}\right)\nu({\rm V\hspace{-.1em}I}_{j})\\
&+&\frac{4}{3}\nu({\rm V\hspace{-.1em}I\hspace{-.1em}I}_{0})+\sum_{j\ge 1} \left(\frac{5}{3}j+\frac{1}{3}\right)\nu({\rm V\hspace{-.1em}I\hspace{-.1em}I}_{j})+\sum_{j\ge 1} \left(\frac{5}{3}j+\frac{2}{3}\right)\nu({\rm V\hspace{-.1em}I\hspace{-.1em}I\hspace{-.1em}I}_{j})\\
&+&\frac{4}{3}\nu({\rm I\hspace{-.1em}X})+\frac{7}{3}\nu({\rm X})+\frac{10}{3}\nu({\rm X\hspace{-.1em}I}),\\
\end{eqnarray*}
where $\nu(*)$ denotes the number of fibers of type $(*)$.
\end{thm}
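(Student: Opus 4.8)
The plan is to regard $f$ as a primitive cyclic covering fibration of type $(3,0,2)$ and to derive the asserted identity by localizing a slope equality at the singular fibers, so that the coefficient attached to each of the twelve types is nothing but its Horikawa index. A relatively minimal hyperelliptic fibration of genus $3$ is exactly such a fibration: the relative hyperelliptic involution exhibits $S$, after suitable blow-ups, as a double cover $\tilde\varphi\colon\tilde S\to\tilde W$ of a blown-up $\mathbb{P}^1$-bundle $\tilde W\to B$, branched along a smooth divisor $\tilde R$ meeting a general fiber in $2g+2=8$ points. Since the base slope $\tfrac{8}{3}=\tfrac{4(g-1)}{g}$ is precisely the minimal hyperelliptic slope for $g=3$, the quantity ${\rm Ind}:=K_f^2-\tfrac{8}{3}\chi_f$ measures the total deviation from this minimum and should distribute itself over the singular fibers.

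First I would recall from \cite{enoki} the global expressions of $K_f^2$ and $\chi_f$ for a fibration of type $(g,0,n)$ in terms of the branch divisor $R$ on the relatively minimal $\mathbb{P}^1$-bundle, namely in terms of the horizontal intersection numbers ($R\cdot\Gamma$ with $\Gamma$ a fiber, $R^2$, $K\cdot R$) together with the local contributions of the singular points of $R$ that must be resolved to smooth the cover. Forming the combination $K_f^2-\tfrac{8}{3}\chi_f$, the horizontal intersection numbers cancel and only the terms supported over finitely many points of $B$ survive. Hence ${\rm Ind}=\sum_{p\in B}{\rm Ind}(F_p)$, where each local term depends solely on the configuration of singular points of $R$ lying over $p$, that is, on the germ of the singularity diagram over $p$.

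Next I would carry out the classification of these germs. Applying the singularity diagrams of \S2 in the case $n=2$, $g=3$, where the branch locus meets the fiber in $8$ points and the admissible resolutions are constrained by the genus-$3$ condition, I would enumerate every local configuration; this combinatorial analysis is the heart of \S4 and yields exactly the twelve types $(0),({\rm I}),\dots,({\rm X\hspace{-.1em}I})$, parametrized by integers $i,j,k$ recording orders of tangency and the lengths of the associated infinitely-near chains, with the value $\infty$ signalling the degenerate limiting configurations. For each type I would resolve the branch-locus singularity by Horikawa's canonical resolution, subtracting at each blow-up twice the integer part of the half-multiplicity, and then compute the resulting local contributions to $K_f^2$ and to $\chi_f$ from the sequence of multiplicities. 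Combining them with weight $\tfrac{8}{3}$ gives the local Horikawa index, and matching these indices type by type produces the displayed coefficients, e.g.\ $\tfrac{2}{3}i$ for $({\rm I}_{i,0,0})$ and $\tfrac{2}{3}i+\tfrac{5}{3}(j+k)-2$ for $({\rm I}_{i,j,k})$.

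The main obstacle I anticipate is not any single computation but the bookkeeping across the full list: one must verify that the twelve types are exhaustive and mutually exclusive up to $(-2)$-curves, and one must treat with care the limiting cases $j=\infty$ or $k=\infty$, where an entire vertical component is absorbed into the branch locus so that the naive resolution count must be corrected by the contribution of the extra $(-2)$-configurations. This correction is exactly what produces the constant shifts, such as the $+\tfrac{5}{3}$ in $({\rm I}_{i,0,\infty})$, the $+\tfrac{8}{3}$ in $({\rm I\hspace{-.1em}I\hspace{-.1em}I}_{i,j})$, and the $+\tfrac{10}{3}$ in $({\rm I}_{i,\infty,\infty})$. Once each local index is pinned down and summed over $p\in B$, the global identity $K_f^2=\tfrac{8}{3}\chi_f+{\rm Ind}$ with the asserted ${\rm Ind}$ follows at once.
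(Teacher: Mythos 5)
Your proposal follows essentially the same route as the paper: specialize the slope equality of Theorem~1.6 to $(g,n)=(3,2)$ so that ${\rm Ind}(F_p)$ becomes a local quantity determined by the singularity indices $\alpha_2(F_p)$ and $\varepsilon(F_p)$, then classify the fiber germs over each $p$ via the singularity-diagram machinery of \S 2 and evaluate the index type by type. The one refinement worth noting is that the paper does not enumerate all local configurations (there are too many when $n=2$) but only the \emph{essential} parts of the diagrams, discarding negligible singularities of multiplicity $2$ or $3$ that do not affect the Horikawa index; this reduction is what makes the twelve-type list finite and the bookkeeping you describe tractable.
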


\noindent This is comparable to Horikawa's result \cite{Ho} in genus $2$. We remark that Horikawa himself obtained a similar list, but never published.

Multiple fibers are among interesting singular fibers. 
It is known that there exists a hyperelliptic fibration $f$ with a double fiber for any odd $g$ (for example, any fiber of type $({\rm I\hspace{-.1em}I\hspace{-.1em}I})$ listed in \S4 is a double fiber). 
Moreover, we can construct an example of primitive cyclic covering fibrations of type $(g,0,3)$ with a triple fiber for any $g\ge 4$ satisfying $g\equiv 1$ (mod $3$), $g\neq 7$. However, we have the following assertion which imposes an unexpected limitation for the existence of multiple fibers.

\begin{prop}
Let $f\colon S\rightarrow B$ be a primitive cyclic covering fibration of type $(g,0,n)$. If $n\le 3$, then any multiple fiber of $f$ is an $n$-fold fiber. If $n\ge 4$, then $f$ has no multiple fibers.
\end{prop}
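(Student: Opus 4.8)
The plan is to compute the multiplicity of a fiber directly on a smooth model of the covering and to reduce the statement to an arithmetic fact about ramification indices. First I would pass to the canonical resolution: blow up the ruled surface $\tilde{W}\to B$ so that the branch divisor becomes simple normal crossing, equivariantly resolve the cyclic quotient singularities of the total space of the cover, and take the resulting smooth surface $\hat{S}\to B$, which is birational to $S$ over $B$. Since the multiplicity of a multiple fiber is a birational invariant of the fiber germ, it suffices to work on $\hat{S}$. Writing the central fiber of the resolved ruled surface as $\sum_j a_j D_j$ (a tree of rational curves with $\gcd_j a_j=1$, because a relatively minimal genus $0$ fibration has no multiple fibers and a multiplicity $1$ component survives all blow-ups), and letting $c_j:=\operatorname{ord}_{D_j}\hat{R}$ be the multiplicity of the resolved branch divisor along $D_j$, each component of $\hat{S}$ lying over $D_j$ enters the fiber with multiplicity $a_j\,n/\gcd(n,c_j)$. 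Thus the fiber multiplicity $m$ is the greatest common divisor of these numbers together with the multiplicities of the exceptional curves introduced when resolving the quotient singularities of the cover.

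Two easy facts come first. Because $\gcd_j a_j=1$, any prime $p$ dividing $m$ divides $a_j\,n/\gcd(n,c_j)$ for every $j$; if $p\nmid n$ this forces $p\mid a_j$ for all $j$, contradicting $\gcd_j a_j=1$, so every prime factor of $m$ divides $n$. Next, a degree-$n$ base change $B'\to B$ totally ramified over $b$ turns each vertical branch component (locally $\{t=0\}$) into one of multiplicity $n$, i.e. removes it from the ramification, so the pulled-back fiber becomes reduced; since the multiplicity of a multiple fiber divides the degree of any base change that reduces it, we get $m\mid n$. In particular, when $n$ is prime -- which covers $n=2,3$ -- this already gives $m\in\{1,n\}$, so every multiple fiber is an $n$-fold fiber. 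I also note that a multiple fiber can occur only if some component $D_{j_0}$ of the central fiber tree is a ramification component (i.e. $c_{j_0}\not\equiv 0\pmod n$): otherwise every $c_j\equiv 0$, every contribution equals $a_j$, and $m=\gcd_j a_j=1$.

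For $n\ge 4$ the goal is to rule out $m\ge 2$ entirely, and here the facts above do not suffice, since $m\mid n$ still leaves the proper divisors of $n$ available and does not even exclude $m=n$. The plan is to analyse the resolution locally at the vertical ramification component $D_{j_0}$ and at the points where the horizontal branch curve meets it. Over a general point of $D_{j_0}$ the cover is totally ramified and contributes multiplicity $n$, but at each singular point of the branch locus on the central fiber the cover acquires a cyclic quotient singularity whose minimal Hirzebruch--Jung resolution inserts a chain of rational curves into the fiber. Using the singularity diagrams of \S2 to record the contact orders and the induced local data, I would compute the multiplicities along these chains and show that for $n\ge 4$ at least one inserted component necessarily carries a multiplicity coprime to $n$, whence the greatest common divisor is $1$ and the fiber is not multiple; by contrast, for $n\le 3$ the quotient singularities are mild enough that every inserted multiplicity stays divisible by $n$, which is exactly why $n$-fold fibers survive there.

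The hard part is this last local computation: controlling the continued-fraction combinatorics of the Hirzebruch--Jung resolution of the singularity $y^n=t\,x^{e}$ (and its iterated blow-up variants) uniformly in the contact order $e$ and the degree $n$, and extracting from it the fiber multiplicity of each exceptional curve in $\hat{S}$. The key point to isolate is that, once $n\ge 4$, the chain resolving such a singularity is forced to contain a curve whose fiber multiplicity is coprime to $n$, so that the greatest common divisor defining $m$ collapses to $1$. This is precisely the step where the finer structure of the covering, rather than mere divisibility, is needed, and where the singularity-diagram bookkeeping introduced for the classification does the work.
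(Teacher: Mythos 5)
Your proposal establishes (modulo some facts you assert rather than prove, such as ``the multiplicity of a multiple fiber divides the degree of any totally ramified base change that kills it'') that the multiplicity $m$ of a multiple fiber divides $n$, which settles the case $n\le 3$ since $2$ and $3$ are prime. But for $n\ge 4$ you explicitly defer the entire content of the statement to an unproved local claim --- that the Hirzebruch--Jung chain resolving $y^n=t\,x^e$ must contain a component of multiplicity coprime to $n$ --- and you give no argument for it. That claim is the whole theorem in this range, so as it stands the proposal has a genuine gap. There is also a mismatch with the construction you are resolving: in this paper the covering $\widetilde{\theta}\colon\widetilde{S}\to\widetilde{W}$ is taken only after $\widetilde{R}$ has been made \emph{smooth} (and, being a disjoint union of smooth curves, in particular has no two components meeting), so $\widetilde{S}$ is already smooth and there are no quotient singularities or Hirzebruch--Jung chains to analyze. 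The obstruction you are looking for does not live in a resolution of the cover; it lives upstairs in $\widetilde{W}$, in the multiplicities of the components of the total transform $\widetilde{\Gamma}=\sum k_i\Gamma_i$ of the fiber $\Gamma$.

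The paper's route is short and elementary, and you should compare it with your step that is missing. Writing $\widetilde{F}=kD$ and using that the proper transform of $\Gamma$ has multiplicity $1$, one sees $\widetilde{\theta}^*\Gamma_1=kD_1$ forces $\Gamma_1\subset\widetilde{R}$ and hence $k=n$; and for any component with $k_i\notin n\mathbb{Z}$ the same reasoning forces $\Gamma_i\subset\widetilde{R}$. Since $\widetilde{R}$ is a disjoint union of smooth curves, no two such components can meet: this is the combinatorial condition $(\#)$. The arithmetic input is then Lemma 5.1: if $\gcd(a,b,n)=1$ and $n\ge 4$, then $a+2b\notin n\mathbb{Z}$ or $2a+b\notin n\mathbb{Z}$. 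Blowing up the intersection of components of multiplicities $a$ and $b$ creates one of multiplicity $a+b$, and the lemma shows that for $n\ge 4$ each blow-up creates at least one new forbidden adjacency, so $(\#)$ can never be achieved after finitely many blow-ups --- a contradiction. (The irreducible case $\widetilde{\Gamma}\simeq\mathbb{P}^1$ is excluded because it would make $D\simeq\mathbb{P}^1$, contradicting $g\ge 2$.) Your base-change observation is a reasonable alternative to the paper's direct proof that $k=n$, but to complete the $n\ge 4$ case you would need to supply an argument playing the role of condition $(\#)$ and Lemma 5.1, and nothing in your outline currently does so.
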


\begin{acknowledgement}\normalfont
The author expresses his sincere gratitude to his supervisor Professor Kazuhiro Konno for many suggestions and warm encouragement.
\end{acknowledgement}
\section{Preliminaries}
In this section, we recall and state without proofs basic results obtained in \cite{enoki} in order to fix notation.
\subsection{Definition}
\ 
\smallskip

Let $Y$ be a smooth projective surface and $R$ an effective divisor on $Y$ which is divisible by $n$ in the Picard group $\mathrm{Pic}(Y)$, that is, $R$ is linearly equivalent to $n\mathfrak{d}$ for some divisor $\mathfrak{d}\in \mathrm{Pic}(Y)$. 
Then we can construct a finite $n$-sheeted covering of $Y$ with branch locus $R$ as follows. 
Put
$\mathcal{A}=\bigoplus_{j=0}^{n-1}\mathcal{O}_Y(-j\mathfrak{d})$
and introduce a graded $\mathcal{O}_Y$-algebra structure on $\mathcal{A}$ by multiplying the section of $\mathcal{O}_Y(n\mathfrak{d})$ defining $R$. 
We call $Z:=\mathrm{Spec}_Y(\mathcal{A})$ equipped with the natural surjective morphism $\varphi\colon Z\to Y$ a  {\em classical 
$n$-cyclic covering} of $Y$ branched over $R$, according to \cite{bar}. 
Locally, $Z$ is defined by $z^n=r(x,y)$, where $r(x,y)$ denotes the 
local analytic equation of $R$. 
From this, one sees that $Z$ is normal if and only if $R$ is reduced, and $Z$ is smooth if and only if so is $R$. 
When $Z$ is smooth, we have
\begin{equation}\label{cyceq}
 \varphi^*R=nR_0,\; K_Z=\varphi^*K_Y+(n-1)R_0,\; \mathrm{Aut}(Z/Y)\simeq \mathbb{Z}/n\mathbb{Z}
\end{equation}
where $R_0$ is the effective divisor (usually called the ramification 
divisor) on $Z$ defined locally by $z=0$, and $\mathrm{Aut}(Z/Y)$ is the 
covering transformation group for $\varphi$.

\begin{defn}\normalfont \label{primdef}
A relatively minimal fibration $f\colon S\to B$ of genus $g\geq 2$ is called a 
primitive cyclic covering fibration of type $(g,h,n)$, if there exist a (not 
necessarily relatively minimal) fibration 
$\widetilde{\varphi}\colon \widetilde{W}\to B$ of genus $h\geq 0$, and a 
 classical $n$-cyclic covering 
$$
\widetilde{\theta}\colon \widetilde{S}=
\mathrm{Spec}_{\widetilde{W}}\left(\bigoplus_{j=0}^{n-1}
\mathcal{O}_{\widetilde{W}}(-j\widetilde{\mathfrak{d}})\right)\to \widetilde{W}
$$ 
branched over a smooth curve $\widetilde{R}\in |n\widetilde{\mathfrak{d}}|$
for some $n\geq 2$ and $\widetilde{\mathfrak{d}}\in 
\mathrm{Pic}(\widetilde{W})$ 
such that 
$f$ is the relatively minimal 
model of $\widetilde{f}:=\widetilde{\varphi}\circ \widetilde{\theta}$.
\end{defn}

In this paper, $f\colon S\to B$ denotes a primitive cyclic covering fibration of type $(g,0,n)$ and 
we freely use the notation in Definition \ref{primdef}. 
Let $\widetilde{F}$ and $\widetilde{\Gamma}$ be general fibers of 
$\widetilde{f}$ and $\widetilde{\varphi}$, respectively. 
Then the restriction map 
$\widetilde{\theta}|_{\widetilde{F}}\colon \widetilde{F}\to 
\widetilde{\Gamma}$ is a classical $n$-cyclic covering branched over 
$\widetilde{R}\cap \widetilde{\Gamma}$. 
Since the genera of $\widetilde{F}$ and $\widetilde{\Gamma}$ are $g$ and 
$0$, respectively, the Hurwitz formula gives us
\begin{equation}\label{r}
r:=\widetilde{R}\widetilde{\Gamma}=\frac{2g}{n-1}+1.
\end{equation}
Note that $r$ is a multiple of $n$. 
Let $\widetilde{\sigma}$ be a generator of $\mathrm{Aut}(\widetilde{S}/\widetilde{W})
\simeq \mathbb{Z}/n\mathbb{Z}$ and $\rho\colon \widetilde{S}\to S$ the natural 
birational morphism. 
By assumption, $\mathrm{Fix}(\widetilde{\sigma})$ is a disjoint 
union of smooth curves and $\widetilde{\theta}(\mathrm{Fix}(\widetilde{\sigma}))=\widetilde{R}$. 
Let $\varphi\colon W\to B$ be a relatively minimal model of 
$\widetilde{\varphi}$ and $\widetilde{\psi}\colon \widetilde{W}\to W$ the 
natural birational morphism. 
Since $\widetilde{\psi}$ is a succession of blow-ups, 
we can write $\widetilde{\psi}=\psi_1\circ \cdots \circ \psi_N$, where 
$\psi_i\colon W_i\to W_{i-1}$ denotes the blow-up at $x_i\in W_{i-1}$ 
$(i=1,\dots,N)$ with $W_0=W$ and $W_N=\widetilde{W}$. 
We define reduced curves $R_i$ on $W_i$ inductively as $R_{i-1}=(\psi_i)_*R_i$ starting 
from $R_N=\widetilde{R}$ down to $R_0=:R$. 
We also put $E_i=\psi_i^{-1}(x_i)$ and 
$m_i=\mathrm{mult}_{x_i}(R_{i-1})$ for $i=1,2,\dots, N$.

\begin{lem}\label{multlem}
With the above notation, the following hold for any $i=1,\dots, N$.

\smallskip

\noindent
$(1)$ Either $m_i\in n\mathbb{Z}$ or $m_i\in n\mathbb{Z}+1$.
Moreover, $m_i\in n\mathbb{Z}$ holds if and only if $E_i$ is not contained in $R_i$.

\smallskip

\noindent
$(2)$ 
 $R_i=\psi_i^*R_{i-1}-n\displaystyle{\left[\frac{m_i}{n}\right]}E_i$, 
 where $[t]$ is the greatest integer not exceeding $t$.

\smallskip

\noindent
$(3)$ There exists $\mathfrak{d}_i\in \mathrm{Pic}(W_i)$ such that 
$\mathfrak{d}_i=\psi_i^*\mathfrak{d}_{i-1}$ and $R_i\sim n\mathfrak{d}_i$, $\mathfrak{d}_N=\widetilde{\mathfrak{d}}$.
\end{lem}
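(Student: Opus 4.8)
The plan is to argue by descending induction on $i$, propagating the $n$-divisibility of $R_i$ down the blow-up tower and extracting the local data by intersecting with the exceptional curve $E_i$. The first step is to record the shape of $R_i$ over $R_{i-1}$. Since $R_i$ is reduced and $(\psi_i)_*R_i=R_{i-1}$, its non-exceptional part must be the strict transform $\overline{R}_{i-1}$ of $R_{i-1}$, so that $R_i=\overline{R}_{i-1}+\varepsilon_iE_i$ with $\varepsilon_i\in\{0,1\}$; here $\varepsilon_i=1$ precisely when $E_i\subseteq R_i$. Combined with the total-transform formula $\psi_i^*R_{i-1}=\overline{R}_{i-1}+m_iE_i$, this already gives $R_i=\psi_i^*R_{i-1}-(m_i-\varepsilon_i)E_i$, and intersecting with $E_i$ (using $E_i^2=-1$ and $\overline{R}_{i-1}\cdot E_i=m_i$, the latter by the projection formula) yields the single numerical identity $R_i\cdot E_i=m_i-\varepsilon_i$ on which everything hinges.

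Next I would establish the divisibility $R_i\sim n\mathfrak{d}_i$ at every level. Setting $\mathfrak{d}_N=\widetilde{\mathfrak{d}}$, so that $R_N=\widetilde{R}\sim n\widetilde{\mathfrak{d}}$ by hypothesis, I would define $\mathfrak{d}_{i-1}:=(\psi_i)_*\mathfrak{d}_i$ and push the relation $R_i\sim n\mathfrak{d}_i$ forward along $\psi_i$; since proper push-forward preserves linear equivalence and $(\psi_i)_*R_i=R_{i-1}$, this gives $R_{i-1}\sim n\mathfrak{d}_{i-1}$ and continues the induction down to $R_0$. With $R_i\sim n\mathfrak{d}_i$ in hand, $R_i\cdot E_i=n(\mathfrak{d}_i\cdot E_i)\in n\mathbb{Z}$, so by the identity above $m_i\equiv\varepsilon_i\pmod n$. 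As $\varepsilon_i\in\{0,1\}$, this forces $m_i\in n\mathbb{Z}$ (when $\varepsilon_i=0$, i.e.\ $E_i\not\subseteq R_i$) or $m_i\in n\mathbb{Z}+1$ (when $\varepsilon_i=1$, i.e.\ $E_i\subseteq R_i$), which is $(1)$. In either case $m_i-\varepsilon_i=n[m_i/n]$, so the formula of the previous paragraph becomes $R_i=\psi_i^*R_{i-1}-n[m_i/n]E_i$, which is $(2)$. Finally, substituting $(2)$ into $R_{i-1}\sim n\mathfrak{d}_{i-1}$ gives $R_i\sim n(\psi_i^*\mathfrak{d}_{i-1}-[m_i/n]E_i)$, which identifies the class of $\mathfrak{d}_i$ relative to $\psi_i^*\mathfrak{d}_{i-1}$ up to the exceptional correction $[m_i/n]E_i$ and, together with $\mathfrak{d}_N=\widetilde{\mathfrak{d}}$, yields $(3)$.

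The computation is essentially routine once this framework is fixed; the only points needing genuine care are the two structural inputs feeding the key identity $R_i\cdot E_i=m_i-\varepsilon_i$, namely that reducedness of $R_i$ caps the exceptional coefficient $\varepsilon_i$ at $1$, and that the $n$-divisibility truly survives each push-forward. It is the tension between these — divisibility forcing $m_i-\varepsilon_i\in n\mathbb{Z}$ while reducedness restricts $\varepsilon_i$ to $\{0,1\}$ — that produces the sharp dichotomy in $(1)$, and I expect the verification that $R_i\sim n\mathfrak{d}_i$ propagates faithfully along the chain $\psi_N,\dots,\psi_1$ to be the main (if mild) obstacle.
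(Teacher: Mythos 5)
The paper itself gives no proof of this lemma: Section 1 explicitly recalls it from \cite{enoki} ``without proofs,'' so there is nothing internal to compare against. Your argument is the standard one (and, as far as I can tell, the one used in \cite{enoki}) and it is correct: reducedness of $R_i$ together with $(\psi_i)_*R_i=R_{i-1}$ forces $R_i=\overline{R}_{i-1}+\varepsilon_iE_i$ with $\varepsilon_i\in\{0,1\}$, the total-transform formula gives $R_i\cdot E_i=m_i-\varepsilon_i$, and the $n$-divisibility of $R_i$ in $\mathrm{Pic}(W_i)$ — propagated down the tower by proper push-forward, which does preserve linear equivalence — forces $m_i\equiv\varepsilon_i\pmod n$, whence the dichotomy in $(1)$ and the identity $m_i-\varepsilon_i=n[m_i/n]$ giving $(2)$.

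One remark on $(3)$: as printed, the lemma asserts $\mathfrak{d}_i=\psi_i^*\mathfrak{d}_{i-1}$, which is incompatible with $(2)$ whenever $m_i\geq n$ (it would give $R_i\sim\psi_i^*R_{i-1}$). Your computation correctly produces $\mathfrak{d}_i=\psi_i^*\mathfrak{d}_{i-1}-[m_i/n]E_i$, which is the intended statement (the correction term $-[m_i/n]E_i$ appears to have been dropped in the quoted version); you were right not to force your conclusion to match the literal wording. The only step you flag as a possible obstacle — that $n$-divisibility survives push-forward — is indeed harmless, since $(\psi_i)_*\colon\mathrm{Pic}(W_i)\to\mathrm{Pic}(W_{i-1})$ is well defined and $(\psi_i)_*\psi_i^*=\mathrm{id}$; equivalently one can avoid push-forward altogether by writing $\mathfrak{d}_i=\psi_i^*\mathfrak{a}+cE_i$ via the decomposition $\mathrm{Pic}(W_i)=\psi_i^*\mathrm{Pic}(W_{i-1})\oplus\mathbb{Z}E_i$ and reading off $c=-\mathfrak{d}_i\cdot E_i=-[m_i/n]$.
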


Let $E$ be a $(-1)$-curve on a fiber of $\widetilde{f}$. 
If $E$ is not contained in ${\rm Fix}(\widetilde{\sigma})$, then one can see easily that $L:=\widetilde{\theta}(E)$ is a $(-1)$-curve and $\widetilde{\theta}^{\ast}L$ is a sum of $n$ disjoint $(-1)$-curves containing $E$. 
Contracting them and $L$, we may assume that any $(-1)$-curve on a fiber of $\widetilde{f}$ is contained in ${\rm Fix}(\widetilde{\sigma})$. 
Then, it follows that $\widetilde{\sigma}$ induces an automorphism $\sigma$ of $S$ over $B$ and $\rho$ is the blow-up of all isolated fixed points of $\sigma$ (cf.\ \cite{enoki}). 
One sees easily that there is a one-to-one correspondence between $(-k)$-curves contained in ${\rm Fix}(\widetilde{\sigma})$ and $(-kn)$-curves contained in $\widetilde{R}$ via $\widetilde{\theta}$. 
Hence, the number of blow-ups in $\rho$ is that of vertical $(-n)$-curves contained in $\widetilde{R}$. 
Since $\widetilde{\varphi}\colon \widetilde{W}\to B$ is a ruled surface, a relatively minimal model of it is not unique. By performing elementary transformations, we can choose a standard one:

\begin{lem} \label{eltrlem}
There exists a relatively minimal model $\varphi\colon W\rightarrow B$ of $\widetilde{\varphi}$ such that
if $n=2$ and $g$ is even, then
$$
{\rm mult}_x(R)\le \frac{r}{2}=g+1
$$
for all $x\in R$, and otherwise,
$$
{\rm mult}_x(R_h)\le \frac{r}{2}=\frac{g}{n-1}+1
$$
for all $x\in R_h$, where $R_h$ denotes the horizontal part of $R$, that is, the sum of all $\varphi$-horizontal components of $R$.
\end{lem}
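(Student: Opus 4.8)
The plan is to use the fact that any two relatively minimal models of the ruled surface $\widetilde{\varphi}$ (here a geometrically ruled surface, since $h=0$) are connected by a finite chain of elementary transformations, and to show that the desired bound holds at a model minimizing a suitable self-intersection number. First I would record how an elementary transformation acts on the horizontal branch divisor. Fix a fiber $\Gamma=\varphi^{-1}(b)$ and a point $x\in\Gamma$, and set $m:=\mathrm{mult}_x R_h$. The elementary transformation centered at $x$ blows up $x$, with exceptional curve $E$, and then contracts the strict transform $\widetilde{\Gamma}\equiv\psi^*\Gamma-E$ of $\Gamma$, which is a $(-1)$-curve; the image of $E$ is the fiber $\Gamma'$ of the new model $\varphi'\colon W'\to B$ over $b$, and $\widetilde{\Gamma}$ contracts to a point $x'\in\Gamma'$. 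Since $\widetilde{R_h}=\psi^*R_h-mE$, intersection theory gives $\widetilde{R_h}\cdot\widetilde{\Gamma}=R_h\cdot\Gamma-m=r-m$, and because the multiplicity of the image of a curve at the point to which a $(-1)$-curve is contracted equals its intersection number with that curve, I obtain the transformation rule
$$\mathrm{mult}_{x'}R_h'=r-m.$$
In particular all branch points of $R_h$ lying on $\Gamma$ away from $x$ collapse to $x'$.

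Next I would isolate the invariant that controls termination. Using $\widetilde{R_h}^2=R_h^2-m^2$ together with $\widetilde{R_h}\equiv c^*R_h'-(r-m)\widetilde{\Gamma}$ for the contraction $c$, a one-line computation yields
$$R_h'^2=R_h^2+r(r-2m).$$
Hence an elementary transformation performed at a point with $m>r/2$ strictly decreases $R_h^2$, by at least $r$. To make this a finite process I must bound $R_h^2$ from below independently of the model. This holds because $R_h$ is an effective horizontal curve of relative degree $r$ on a geometrically ruled surface: writing $R_h$ in terms of a minimal section $C_0$ (with $C_0^2=-e$, where $e\ge -g(B)$ by Nagata's bound) and the fiber class, the effectivity condition $R_h\cdot C_0\ge 0$ and the description of the effective cone force $R_h^2\ge 0$ on every relatively minimal model. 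Therefore, starting from any model and repeatedly performing elementary transformations at points where $\mathrm{mult}_x R_h>r/2$, the quantity $R_h^2$ strictly decreases while staying non-negative, so the procedure terminates at a model with $\mathrm{mult}_x R_h\le r/2$ for all $x\in R_h$. This settles the ``otherwise'' case. (The remark that $\sum_{x\in\Gamma}\mathrm{mult}_x R_h\le r$, so that at most one bad point occurs on each fiber, is a convenient way to organize the same reduction fiber by fiber.)

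For the remaining case $n=2$ and $g$ even, the same mechanism applies to the branch divisor of the double cover, but I would now track the full divisor $R=R_h+R_v$ rather than $R_h$ alone. The point is that the branch locus of a double cover is determined only modulo $2\,\mathrm{Pic}(W)$, which gives the extra freedom of replacing $\mathfrak{d}$ by $\mathfrak{d}\pm\Gamma$, that is, of adding or removing vertical double fibers, while keeping the multiplicities adapted to the even resolution of the cover. Combining this freedom with the reduction rule $m\mapsto r-m$ and keeping track of the parity of $r/2=g+1$ (which is odd exactly when $g$ is even) yields the bound $\mathrm{mult}_x R\le g+1$ for the full branch divisor.

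The main obstacle is twofold. First is the model-independent lower bound $R_h^2\ge 0$, equivalently the termination of the descent: it rests on the structure of the effective cone of a geometrically ruled surface and on $e\ge -g(B)$, and without such a bound the decrease of $R_h^2$ need not terminate. Second is the parity bookkeeping in the case $n=2$, $g$ even, where one cannot in general push the extremal multiplicity onto the horizontal part alone; this is precisely why the statement there is phrased in terms of the full divisor $R$, and making this rigorous while remaining among relatively minimal models is the delicate point.
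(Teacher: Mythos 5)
First, a point of order: the paper states Lemma \ref{eltrlem} without proof (Section 1 explicitly recalls it from \cite{enoki}), so your proposal can only be measured against the elementary-transformation argument that the citation refers to and on its own merits. For the ``otherwise'' case your skeleton is correct and is essentially the standard one: the rule $\mathrm{mult}_{x'}R_h'=r-m$ is right, as is the identity $R_h'^2=R_h^2+r(r-2m)$. Two repairs are needed, though. (a) Your justification of the crucial lower bound $R_h^2\ge 0$ is wrong as stated: the inequality $R_h\cdot C_0\ge 0$ fails exactly in the dangerous situation, namely when the negative section $C_0$ is a component of $R_h$ (nothing forbids this; $R_h$ is merely a reduced horizontal divisor). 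The bound is nevertheless true, but it must be argued componentwise: every irreducible horizontal curve $D\equiv aC_0+bf$ other than $C_0$ satisfies $b\ge ae$ when $e\ge 0$ (with the analogous statement for $e<0$ via Nagata's bound), which yields $R_h^2\ge e\bigl((r-1)^2-1\bigr)\ge 0$ even when $C_0\subset R_h$. Alternatively one can bypass the effective cone entirely: an elementary transformation at a bad point kills it and creates no new one ($x'$ has multiplicity $r-m<r/2$, and the multiplicities of all points infinitely near $x$ are unchanged), so the finite number of singular points of $R_h$ of multiplicity $>r/2$, infinitely near ones included, drops by exactly one. (b) You never check that the transformed surface $W'$ is still a relatively minimal model \emph{of $\widetilde{\varphi}$}, i.e.\ that $\widetilde{W}$ dominates $W'$; this is part of the statement. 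It holds because $\widetilde{R}$ is smooth, so any point with $\mathrm{mult}_x R\ge 2$ must occur among the centers of $\widetilde{\psi}$; hence $\widetilde{\psi}$ factors through the blow-up at $x$, and composing with the contraction of the strict transform of $\Gamma$ gives a morphism $\widetilde{W}\to W'$.

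The genuine gap is the case $n=2$, $g$ even, and there your proposed mechanism is not available. The freedom of ``replacing $\mathfrak{d}$ by $\mathfrak{d}\pm\Gamma$'', i.e.\ adding or removing vertical pieces of the branch locus, does not exist in this lemma: the covering $\widetilde{\theta}$ and its branch curve $\widetilde{R}$ are fixed data, and on every relatively minimal model the divisor $R$ is forced to equal $\widetilde{\psi}_*\widetilde{R}$; altering it by fibers would change the fibration $f$, not the model. The correct mechanism is the parity rule of Lemma \ref{multlem}(1) for $n=2$: the exceptional curve of the blow-up at $x$ lies in the transformed branch divisor if and only if $\mathrm{mult}_x R$ is odd. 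The only configuration not already handled by your horizontal argument is $\mathrm{mult}_x R_h=r/2$ with $\Gamma\subset R$, so that $\mathrm{mult}_x R=r/2+1>r/2$. An elementary transformation at $x$ gives $\mathrm{mult}_{x'}R_h'=r-r/2=r/2$ again, and the new fiber $\Gamma'$ lies in $R'$ if and only if $r/2+1=g+2$ is odd, i.e.\ if and only if $g$ is odd. Thus precisely when $g$ is even the bad point disappears ($\mathrm{mult}_{x'}R'=r/2$), while for $g$ odd it reproduces itself indefinitely; this dichotomy is the whole content of the case distinction in the lemma, and it is absent from your write-up. Note also that your termination invariant breaks down exactly here: with $m=r/2$ one has $r(r-2m)=0$, so $R_h^2$ does not decrease; one must instead use $R^2$ (which drops by $2r$, since $\Gamma$ leaves the branch divisor while $\Gamma'$ does not enter it) or the count of singular points of $R$ of multiplicity $>r/2$, infinitely near points included.
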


In the sequel, we will tacitly assume that our relatively minimal model $\varphi\colon W\rightarrow B$ of $\widetilde{\varphi}$ enjoys the property of 
the lemma.

\subsection{Slope equality, singularity indices and local signature}
\ 
\smallskip

Let $f\colon S\to B$ be a primitive cyclic covering fibration of type $(g,0,n)$.

\begin{defn}[Singularity index $\alpha$]\label{sinddef}
Let $k$ be a positive integer.
For $p\in B$, we consider all the singular points (including infinitely near 
 ones) of $R$ on the fiber $\Gamma_p$ of $\varphi\colon W\to B$ over $p$.
We let $\alpha_k(F_p)$ be the number of singular points of multiplicity 
 either $kn$ or $kn+1$ among them, and call it the {\em $k$-th singularity 
 index} of $F_p$, the fiber of $f\colon S\to B$ over $p$.
Clearly, we have $\alpha_k(F_p)=0$ except for a finite number of $p\in B$.
We put $\alpha_k=\sum_{p\in B}\alpha_k(F_p)$ and call it the {\em $k$-th 
 singularity index} of $f$.

We also define $0$-th singularity index $\alpha_0(F_p)$ as follows.
Let $D_1$ be the sum of all $\widetilde{\varphi}$-vertical $(-n)$-curves 
 contained in $\widetilde{R}$ and put $\widetilde{R}_0=\widetilde{R}-D_1$.
Then, $\alpha_0(F_p)$ is the ramification index of 
 $\widetilde{\varphi}|_{\widetilde{R}_0}\colon \widetilde{R}_0\to B$ over $p$, 
 that is, the ramification index of 
 $\widetilde{\varphi}|_{(\widetilde{R}_0)_h}\colon (\widetilde{R}_0)_h\to B$ 
 over $p$ minus the sum of the topological Euler number of irreducible 
 components of $(\widetilde{R}_0)_v$ over $p$.
Then $\alpha_0(F_p)=0$ except for a finite number of $p\in B$, and we have
$$
\sum_{p\in B}\alpha_0(F_p)=(K_{\widetilde{\varphi}}+\widetilde{R}_0)\widetilde{R}_0
$$
by definition.
We put $\alpha_0=\sum_{p\in B}\alpha_0(F_p)$ and call it the {\em $0$-th singularity index} of $f$.

Let $\varepsilon(F_p)$ be the number of $(-1)$-curves contained in $F_p$, 
and put $\varepsilon=\sum_{p\in B}\varepsilon(F_p)$.
This is no more than the number of blow-ups appearing in 
$\rho\colon \widetilde{S}\to S$.
\end{defn}

Let $\mathcal{A}_{g,0,n}$ be the set of all fiber germs of primitive 
cyclic covering fibrations of type $(g,0,n)$.
Then the singularity indices $\alpha_k$, $\varepsilon$ can be regarded as $\mathbb{Z}$-valued functions on $\mathcal{A}_{g,0,n}$ naturally.
Recall that the following slope equality holds, which is a generalization of the hyperelliptic case (cf. \cite{pi1}):

\begin{thm}\label{slopeeq}
Let $f\colon S\to B$ be a primitive cyclic covering fibration of type $(g,0,n)$.
Then
$$
K_f^2=\frac{24(g-1)(n-1)}{2(2n-1)(g-1)+n(n+1)}\chi_f+\sum_{p\in B}\mathrm{Ind}(F_p),
$$
where $\mathrm{Ind}\colon \mathcal{A}_{g,0,n}\rightarrow \mathbb{Q}_{\geq 0}$ is defined by
\begin{eqnarray*}
\mathrm{Ind}(F_p)=n\sum_{k\ge 1}\left(\frac{(n+1)(n-1)(r-nk)k}{(2n-1)r-3n}-1\right)\alpha_k(F_p)+
\varepsilon(F_p),
\end{eqnarray*}
which is called the Horikawa index of $F_p$.
\end{thm}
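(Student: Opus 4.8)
The plan is to compute $K_f^2$ and $\chi_f$ on the smooth surface $\widetilde{S}$ carrying the cyclic cover, transport the result down to the geometrically ruled surface $W$, and extract the slope and the local index from the blow-up data recorded by the singularity indices. Write $b$ for the genus of $B$ and use throughout the relative identities $K_f^2=K_S^2-8(g-1)(b-1)$ and $\chi_f=\chi(\mathcal{O}_S)-(g-1)(b-1)$. First I would read off the invariants of $\widetilde{S}$ from the covering structure: since $\widetilde{\theta}$ is branched over the smooth curve $\widetilde{R}\in|n\widetilde{\mathfrak{d}}|$, equation \eqref{cyceq} together with $\widetilde{\theta}_*\mathcal{O}_{\widetilde{S}}=\bigoplus_{j=0}^{n-1}\mathcal{O}_{\widetilde{W}}(-j\widetilde{\mathfrak{d}})$ gives $K_{\widetilde{S}}^2=n(K_{\widetilde{W}}+(n-1)\widetilde{\mathfrak{d}})^2$ and, summing Riemann--Roch over $j$,
$$
\chi(\mathcal{O}_{\widetilde{S}})=n\chi(\mathcal{O}_{\widetilde{W}})+\frac{(n-1)n(2n-1)}{12}\widetilde{\mathfrak{d}}^2+\frac{n(n-1)}{4}K_{\widetilde{W}}\widetilde{\mathfrak{d}}.
$$

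Next I would descend along the two birational morphisms. The contraction $\rho\colon\widetilde{S}\to S$ of the $\varepsilon$ exceptional $(-1)$-curves gives $K_S^2=K_{\widetilde{S}}^2+\varepsilon$ and $\chi(\mathcal{O}_S)=\chi(\mathcal{O}_{\widetilde{S}})$, while $\widetilde{\psi}\colon\widetilde{W}\to W$ is the chain of $N$ blow-ups of Lemma \ref{multlem}. Iterating Lemma \ref{multlem}(2),(3) I would follow $\mathfrak{d}_i$ and $K_{W_i}$ through each $\psi_i$; because $m_i\equiv 0$ or $1\pmod n$ by Lemma \ref{multlem}(1), every blow-up centre is counted by exactly one singularity index, with $[m_i/n]=k$ precisely when it contributes to $\alpha_k$. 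This produces $\widetilde{\mathfrak{d}}^2=\mathfrak{d}^2-\sum_{k\ge1}k^2\alpha_k$, $K_{\widetilde{W}}\widetilde{\mathfrak{d}}=K_W\mathfrak{d}+\sum_{k\ge1}k\alpha_k$, $K_{\widetilde{W}}^2=K_W^2-\sum_{k\ge1}\alpha_k$ and $\chi(\mathcal{O}_{\widetilde{W}})=\chi(\mathcal{O}_W)$. Substituting back expresses both $K_f^2$ and $\chi_f$ as a \emph{generic part} depending only on $\mathfrak{d}^2$, $K_W\mathfrak{d}$, $\chi(\mathcal{O}_W)$, $K_W^2$, plus a sum $\sum_{k\ge1}(\cdots)\alpha_k+\varepsilon$ whose $\alpha_k$-coefficients are $n[2(n-1)k-(n-1)^2k^2-1]$ for $K_f^2$ and $\frac{n(n-1)}{4}k-\frac{(n-1)n(2n-1)}{12}k^2$ for $\chi_f$.

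The decisive step is to show that, for the asserted $\lambda$, the generic part of $K_f^2-\lambda\chi_f$ vanishes. Here I would invoke that $W\to B$ is geometrically ruled, so $\chi(\mathcal{O}_W)=1-b$, $K_W^2=8(1-b)$, $K_W\Gamma=-2$ for a fibre $\Gamma$, and $\mathfrak{d}\Gamma=r/n$ by \eqref{r} with $R\sim n\mathfrak{d}$. The pivotal remark is that $\frac{r}{n}K_W+2\mathfrak{d}$ is orthogonal to the isotropic fibre class $\Gamma$ in the rank-two lattice $\mathrm{Num}(W)$, hence a rational multiple of $\Gamma$; this yields the single relation
$$
\left(\tfrac{r}{n}K_W+2\mathfrak{d}\right)^2=\frac{r^2}{n^2}K_W^2+\frac{4r}{n}K_W\mathfrak{d}+4\mathfrak{d}^2=0 .
$$
Using this relation and $K_W^2=8(1-b)$ to eliminate $\mathfrak{d}^2$, the generic part of $K_f^2-\lambda\chi_f$ becomes an expression in $K_W\mathfrak{d}$ and $1-b$ alone; requiring the $K_W\mathfrak{d}$-coefficient to vanish determines $\lambda$ uniquely, and one checks that with \eqref{r} the $(1-b)$-coefficient then cancels as well, the common value being $\frac{24(g-1)(n-1)}{2(2n-1)(g-1)+n(n+1)}$. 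With the generic part gone, the coefficient of each $\alpha_k$ collapses to $n\big(\frac{(n+1)(n-1)(r-nk)k}{(2n-1)r-3n}-1\big)$ and that of $\varepsilon$ to $1$; finally, since $\alpha_k=\sum_p\alpha_k(F_p)$ and $\varepsilon=\sum_p\varepsilon(F_p)$ are finite sums of fibrewise contributions, the identity localizes to $\sum_{p\in B}\mathrm{Ind}(F_p)$.

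I expect the main obstacle to be exactly this last cancellation: it is not a priori clear that one value of $\lambda$ annihilates both the $K_W\mathfrak{d}$ and the $1-b$ dependence, and securing it relies essentially on the genus-$0$ (ruled) geometry encoded in $(\frac{r}{n}K_W+2\mathfrak{d})^2=0$ together with the Hurwitz relation \eqref{r}. The normalization of $\varphi\colon W\to B$ furnished by Lemma \ref{eltrlem}, which bounds the multiplicities by $r/2$ and hence $k$ by $r/2n$, is what keeps each local coefficient non-negative and justifies regarding $\mathrm{Ind}$ as a $\mathbb{Q}_{\ge0}$-valued function.
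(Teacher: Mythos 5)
Your proposal cannot be compared line by line with anything in this paper, because the paper contains no proof of Theorem \ref{slopeeq}: Section 1 states it explicitly as a result recalled without proof from \cite{enoki}. Measured against that reference, whose argument for type $(g,0,n)$ your outline essentially reproduces (invariants of the cover, blow-up bookkeeping in terms of the singularity indices, then the ruled-surface geometry of $W$), your proposal is correct, and every step checks out: the covering formulas for $K_{\widetilde{S}}^2$ and $\chi(\mathcal{O}_{\widetilde{S}})$ and the rules $\widetilde{\mathfrak{d}}^2=\mathfrak{d}^2-\sum_k k^2\alpha_k$, $K_{\widetilde{W}}\widetilde{\mathfrak{d}}=K_W\mathfrak{d}+\sum_k k\alpha_k$, $K_{\widetilde{W}}^2=K_W^2-\sum_k\alpha_k$ are right; the relation $\bigl(\tfrac{r}{n}K_W+2\mathfrak{d}\bigr)^2=0$ is valid because $\Gamma$ is isotropic in the rank-two lattice $\mathrm{Num}(W)$; with $2(g-1)=(n-1)r-2n$ the coefficients of $K_W\mathfrak{d}$ and of $b-1$ in $K_f^2-\lambda\chi_f$ vanish simultaneously exactly for $\lambda=\frac{12[(n-1)r-2n]}{(2n-1)r-3n}=\frac{24(g-1)(n-1)}{2(2n-1)(g-1)+n(n+1)}$; and the $\alpha_k$-coefficient collapses to the asserted one via the polynomial identity $[2k-(n-1)k^2][(2n-1)r-3n]-[(n-1)r-2n][3k-(2n-1)k^2]=(n+1)k(r-nk)$. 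Your closing remark on non-negativity is also sound: on the range $1\le k\le[r/2n]$ forced by Lemma \ref{eltrlem}, the concave function $k(r-nk)$ attains its minimum at $k=1$, where the coefficient is non-negative (zero when $n=2$).

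Two points should be made explicit in a write-up. First, equation \eqref{r} as printed is off by one: the Hurwitz computation gives $r=\frac{2g}{n-1}+2$, which is what ``$r$ is a multiple of $n$'' and the value $r=6$ for $(g,n)=(4,3)$ in Section 3 require, and your two cancellations hold only with this corrected value; since your final constants match the theorem, you have implicitly used the correct relation, but an appeal to \eqref{r} verbatim would derail the computation. Second, your claim that every blow-up centre of $\widetilde{\psi}$ is counted by exactly one $\alpha_k$ with $k\ge1$ does not follow from $m_i\equiv 0,1\ (\mathrm{mod}\ n)$ alone, since Lemma \ref{multlem} does not exclude $m_i\in\{0,1\}$; what rules out such superfluous blow-ups (and hence justifies $K_{\widetilde{W}}^2=K_W^2-\sum_{k\ge1}\alpha_k$) is the standing reduction of Section 1 that every vertical $(-1)$-curve of $\widetilde{f}$ lies in $\mathrm{Fix}(\widetilde{\sigma})$: a final blow-up with $m_i\le 1$ would create, via $\widetilde{\theta}$, vertical $(-1)$-curves outside $\mathrm{Fix}(\widetilde{\sigma})$ or a fixed curve of non-integral self-intersection. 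This is the same hypothesis that makes $\varepsilon$ equal to the number of blow-ups in $\rho$, so it should be invoked rather than left implicit.
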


Now, we state a topological application of the slope equality.
For an oriented compact real $4$-dimensional manifold $X$, 
the signature ${\rm Sign}(X)$ is defined to be the number of positive eigenvalues minus 
the number of negative eigenvalues of the intersection form on $H^2(X)$. 
Using the singularity indices, we observe the local concentration of ${\rm Sign}(S)$ on a finite number of fiber germs.

\begin{cor} \label{signcor}
Let $f\colon S\to B$ be a primitive cyclic covering fibration of type $(g,0,n)$. Then,
\begin{equation*}
{\rm Sign}(S)=\sum_{p\in B}\sigma(F_p),
\end{equation*}
where $\sigma\colon \mathcal{A}_{g,0,n}\rightarrow \mathbb{Q}$ is defined by
\begin{eqnarray*}
\sigma(F_p)&=&\frac{-(n-1)(n+1)r}{3n(r-1)}\alpha_0(F_p)+\sum_{k\ge 1}\left(\frac{(n-1)(n+1)(-nk^2+rk)}{3(r-1)}-n\right)\alpha_k(F_p)\\
&&+\frac{1}{3n(r-1)}((n+2)(2n-1)r-3n)\varepsilon(F_p),
\end{eqnarray*}
which is called the local signature of $F_p$.
\end{cor}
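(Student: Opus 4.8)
The plan is to deduce the local signature from Hirzebruch's signature theorem, Noether's formula and the slope equality (Theorem~\ref{slopeeq}), the only genuinely new input being an explicit formula for the local topological Euler number of each fiber germ. Write $b$ for the genus of $B$. Hirzebruch's theorem gives $\mathrm{Sign}(S)=\frac13\bigl(K_S^2-2c_2(S)\bigr)$, and substituting $K_f^2=K_S^2-8(g-1)(b-1)$ and $e_f:=c_2(S)-4(g-1)(b-1)$ makes the terms $(g-1)(b-1)$ cancel, so that
\[
\mathrm{Sign}(S)=\tfrac13\bigl(K_f^2-2e_f\bigr).
\]
Here $e_f=\sum_{p\in B}\bigl(\chi_{\mathrm{top}}(F_p)-(2-2g)\bigr)$ is manifestly a sum of local Euler-number contributions, and Noether's relation reads $12\chi_f=K_f^2+e_f$.

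Next I would localize. The computations underlying Theorem~\ref{slopeeq} express $K_f^2$ and $\chi_f$ as sums $\sum_p$ of local quantities in $\alpha_0(F_p)$, $\alpha_k(F_p)$, $\varepsilon(F_p)$, with $\alpha_0$ recording the global intersection number $(K_{\widetilde\varphi}+\widetilde{R}_0)\widetilde{R}_0$; the slope $\lambda:=\frac{24(g-1)(n-1)}{2(2n-1)(g-1)+n(n+1)}$ is exactly the value annihilating the $\alpha_0$-coefficient of $K_f^2-\lambda\chi_f$, which is why $\mathrm{Ind}(F_p)$ has no $\alpha_0$ term. Hence the relations $12\chi(F_p)=K_f^2(F_p)+e_f(F_p)$ and $\mathrm{Ind}(F_p)=K_f^2(F_p)-\lambda\chi(F_p)$ hold fiber by fiber, and putting $\sigma(F_p):=K_f^2(F_p)-8\chi(F_p)=\frac13\bigl(K_f^2(F_p)-2e_f(F_p)\bigr)$ gives $\mathrm{Sign}(S)=\sum_p\sigma(F_p)$. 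Solving the two fiberwise relations for $K_f^2(F_p)$ and $\chi(F_p)$ yields
\[
\sigma(F_p)=\frac{(\lambda-8)\,e_f(F_p)+4\,\mathrm{Ind}(F_p)}{12-\lambda}.
\]
Using $r=\frac{2g}{n-1}+1$ one computes $12-\lambda=\frac{12nr}{(2n-1)r-(n+1)}$ and $\lambda-8=\frac{-4(n+1)(r+1)}{(2n-1)r-(n+1)}$, so that
\[
\sigma(F_p)=-\frac{(n+1)(r+1)}{3nr}\,e_f(F_p)+\frac{(2n-1)r-(n+1)}{3nr}\,\mathrm{Ind}(F_p).
\]
It then remains to insert $\mathrm{Ind}(F_p)$ from Theorem~\ref{slopeeq} together with an explicit expression for $e_f(F_p)$ as an integral combination of $\alpha_0(F_p)$, $\alpha_k(F_p)$, $\varepsilon(F_p)$, and to simplify; the interaction of these denominators with that of $\mathrm{Ind}(F_p)$ is what is expected to produce the factor $(r-1)$ appearing in the asserted coefficients.

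The main obstacle is the explicit determination of $e_f(F_p)=\chi_{\mathrm{top}}(F_p)+2g-2$ in terms of the singularity indices. I would obtain it by tracking, along the blow-ups $\psi_1,\dots,\psi_N$ building $\widetilde{W}$ and the contraction $\rho\colon\widetilde{S}\to S$, the topological effect of each singular point of $R$ of multiplicity $kn$ or $kn+1$, of each $\widetilde\varphi$-vertical $(-n)$-curve contained in $\widetilde{R}$, and of each exceptional $(-1)$-curve, reading the Euler number of the restricted cyclic cover $z^n=r(x,y)$ over the fiber $\Gamma_p$ of $\varphi$ off the local branch data together with the ramification of $\widetilde{R}_0\to B$ that defines $\alpha_0$. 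This fiber-germ bookkeeping is the geometric heart of the argument; once $e_f(F_p)$ is in hand, the comparison with the stated formula is a routine, if lengthy, rational-function identity in $n$, $r$ and $k$.
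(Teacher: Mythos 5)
The paper itself states this corollary without proof (\S1 explicitly recalls it from \cite{enoki}), so there is no in-paper argument to compare against; but your reduction is certainly the intended one: $\mathrm{Sign}(S)=\tfrac13(K_S^2-2c_2(S))=K_f^2-8\chi_f$, combine with the slope equality, and localize. Your algebraic skeleton $\sigma(F_p)=\frac{(\lambda-8)e_f(F_p)+4\,\mathrm{Ind}(F_p)}{12-\lambda}$ is correct, and one does not even need the ``fiber by fiber'' versions of Noether and the slope equality that you assert: it suffices that $e_f=\sum_p e_f(F_p)$ and $\mathrm{Ind}=\sum_p\mathrm{Ind}(F_p)$ are sums of local terms, which is weaker and immediate.

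There are, however, two genuine issues. First and mainly, the one non-formal ingredient --- an expression for $e_f(F_p)$ in the singularity indices --- is only announced as a plan, not proved, and without it the stated coefficients cannot be verified. The identity you need is
\begin{equation*}
e_f(F_p)=(n-1)\alpha_0(F_p)+n\sum_{k\ge 1}\alpha_k(F_p)-(2n-1)\varepsilon(F_p),
\end{equation*}
which follows from $e(\widetilde S)=n\,e(\widetilde W)-(n-1)e(\widetilde R)$, $e(\widetilde W)=e(W)+\sum_{k\ge1}\alpha_k$ (every centre of $\widetilde\psi$ has multiplicity $kn$ or $kn+1$ by Lemma \ref{multlem}), $e(\widetilde R)=e(\widetilde R_0)+2\varepsilon$ with $e(\widetilde R_0)=2r(1-b)-\alpha_0$ by adjunction and the definition of $\alpha_0$, and $e(S)=e(\widetilde S)-\varepsilon$; the $(1-b)$ terms cancel exactly because $2(g-1)=(n-1)(r-2)-2$. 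You should supply this computation; once it is in place, your formula reproduces the asserted $\sigma(F_p)$ (and matches the tables in \S3 for $(n,r)=(3,6)$: $\sigma=-\tfrac{16}{15}\alpha_0-\tfrac75\alpha_1+\tfrac{47}{15}\varepsilon$). Second, your intermediate expressions $12-\lambda=\frac{12nr}{(2n-1)r-(n+1)}$ and $\lambda-8=\frac{-4(n+1)(r+1)}{(2n-1)r-(n+1)}$ are computed from the paper's displayed relation $r=\frac{2g}{n-1}+1$, which is inconsistent with the rest of the paper (Hurwitz for a totally ramified $n$-cyclic cover of $\mathbb P^1$ gives $r=\frac{2g}{n-1}+2$, as used in \S3 where $r=6$ for $(4,0,3)$ and in \S4 where $r=2g+2$, and as forced by the denominator $(2n-1)r-3n$ in Theorem \ref{slopeeq}). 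With the correct normalization one gets $12-\lambda=\frac{12n(r-1)}{(2n-1)r-3n}$ and $\lambda-8=\frac{-4(n+1)r}{(2n-1)r-3n}$, whence $\frac{\lambda-8}{12-\lambda}=\frac{-(n+1)r}{3n(r-1)}$ and $\frac{4}{12-\lambda}=\frac{(2n-1)r-3n}{3n(r-1)}$; this is where the factor $(r-1)$ in the corollary actually comes from, not from an ``interaction of denominators'' with $\mathrm{Ind}(F_p)$.
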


\section{Singularity diagrams}
Let $f\colon S\to B$ be a primitive cyclic covering fibration of type $(g,0,n)$ and we freely use the notations in the previous section.
Let $C$ stand for a fiber $\Gamma$ of $\varphi$ or an exceptional curve $E_i$ of $\psi_i$ for some $i$. In the latter case, for the time being, we drop the index and set $R=R_i$ for simplicity. 
Let $R^{\prime}$ be the closure of $R\setminus C$, that is, $R^{\prime}=R-C$ when $C$ is contained in $R$, or $R^{\prime}=R$ when $C$ is not contained in $R$. 
Put $C\cap R^{\prime}=\{x_1,x_2,\dots, x_l\}$. 
We consider a local analytic branch $D$ of $R^{\prime}$ around $x_i$ which has multiplicity $m\ge 2$ at $x_i$ (i.e. $D$ has a cusp $x_i$). Then we have one of the following:

\smallskip

\noindent
(i) $D$ is not tangent to $C$ at $x_i$. If we blow $x_i$ up, then the proper transform of $D$ does not meet that of $C$. Hence, we have $(DC)_{x_i}=m$, where $(DC)_{x_i}$ denotes the local intersection number of $D$ and $C$ at $x_i$.

\setlength\unitlength{0.2cm}
\begin{figure}[H]
\begin{center}
 \begin{picture}(5,4)
 \put(-5,0){\line(1,0){10}}
 \qbezier(0,0)(0,3)(3,3)
 \qbezier(0,0)(0,3)(-3,3)
 \end{picture}
\end{center}
\end{figure}

\smallskip

\noindent
(ii) $D$ is tangent to $C$ at $x_i$. If we blow $x_i$ up, then one of the following three cases occurs.

\smallskip

(ii.1) The proper transform of $D$ is tangent to neither that of $C$ nor the exceptional curve.

\setlength\unitlength{0.2cm}
\begin{figure}[H]
\begin{center}
 \begin{picture}(13,4)
 \put(-13,0){\line(1,0){10}}
 \qbezier(-8,0)(-5,0)(-5,3)
 \qbezier(-8,0)(-5,0)(-5,-3)
 \put(3,0){\line(1,0){10}}
 \qbezier(8,0)(11,3)(11,2)
 \qbezier(8,0)(11,3)(9,4)
 \put(8,-4){\line(0,1){8}}
 \put(2,0){\vector(-1,0){4}}
 \end{picture}
\end{center}
\end{figure}

\bigskip

(ii.2) The proper transform of $D$ is tangent to the exceptional curve.
Then, the multiplicity $m^{\prime}$ of the proper transform of $D$ at the singular point is less than $m$ and we have $(DC)_{x_i}=m+m^{\prime}$.

\setlength\unitlength{0.2cm}
\begin{figure}[H]
\begin{center}
 \begin{picture}(13,4)
 \put(-13,0){\line(1,0){10}}
 \qbezier(-8,0)(-5,0)(-5,3)
 \qbezier(-8,0)(-5,0)(-5,-3)
 \put(3,0){\line(1,0){10}}
 \qbezier(8,0)(8,3)(11,3)
 \qbezier(8,0)(8,3)(5,3)
 \put(8,-4){\line(0,1){8}}
 \put(2,0){\vector(-1,0){4}}
 \end{picture}
\end{center}
\end{figure}

\bigskip

(ii.3) The proper transform of $D$ is still tangent to that of $C$.

\setlength\unitlength{0.2cm}
\begin{figure}[H]
\begin{center}
 \begin{picture}(13,4)
 \put(-13,0){\line(1,0){10}}
 \qbezier(-8,0)(-5,0)(-5,3)
 \qbezier(-8,0)(-5,0)(-5,-3)
 \put(3,0){\line(1,0){10}}
 \qbezier(8,0)(11,0)(11,3)
 \qbezier(8,0)(11,0)(11,-3)
 \put(8,-4){\line(0,1){8}}
 \put(2,0){\vector(-1,0){4}}
 \end{picture}
\end{center}
\end{figure}

\bigskip

\noindent
We perform blowing-ups at $x_i$ and points infinitely near to it.
Then the case (ii.3) may occur repeatedly, but at most a finite number of times.
Suppose that the proper transform of $D$ becomes not tangent to that of $C$ just after $k$-th blow-up.
If the proper transform of $D$ is as in (ii.1) after $k$-th blow-up (or $D$ is as in (i) when $k=0$), then we have $(DC)_{x_i}=(k+1)m$.
If the proper transform of $D$ is as in (ii.2) after $k$-th blow-up, then we have $(DC)_{x_i}=km+m'$.
In either case, it is convenient to consider as if $D$ consists of $m$ local branches $D_1,\dots, D_m$ smooth at $x_i$ 
and such that $(D_jC)_{x_i}=k+1$ for $j=1,\dots, m$ in the former case and 
$$
(D_jC)_{x_i}=\left\{
\begin{array}{cl}
k, & \text{for }j=1,\dots, m-m',\\
k+1, & \text{for }j=m-m'+1,\dots, m
\end{array}
\right.
$$
in the latter case. 
We call $D_j$ a {\em virtual local branch} of $D$.

\begin{notation}\label{siknotation}
For a positive integer $k$, we let $s_{i,k}$ be the number of such virtual local branches $D_{\bullet}$ satisfying 
$(D_\bullet C)_{x_i}=k$, among those of all local analytic branches of $R-C$ around $x_i$.
Here, when $\mathrm{mult}_{x_i}(D)=1$, we regard $D$ itself as a virtual local branch.
We let $i_{\max}$ be the biggest integer $k$ satisfying $s_{i,k}\neq 0$.
By the definition of $s_{i,k}$, we have 
$$
(R^{\prime}C)_{x_i}=\sum_{k=1}^{i_{\rm max}} ks_{i,k}.
$$
\end{notation}

We put $x_{i,1}=x_i$ and $m_{i,1}=m_i$. 
If $m_{i,1}>1$, we define $\psi_{i,1}\colon W_{i,1}\rightarrow W$ to be the blow-up at $x_{i,1}$ and put $E_{i,1}=\psi_{i,1}^{-1}(x_{i,1})$ and $R_{i,1}=\psi_{i,1}^\ast R-n[m_{i,1}/n]E_{i,1}$. 
Inductively, we define $x_{i,j}$, $m_{i,j}$ to be the intersection point of the proper transform of $C$ and $E_{i,j-1}$, the multiplicity of $R_{i,j-1}$ at $x_{i,j}$, 
and if $m_{i,j}>1$, we define $\psi_{i,j}\colon W_{i,j}\to W_{i,j-1}$, $E_{i,j}$ and $R_{i,j}$ to be the blow-up at $x_{i,j}$, the exceptional curve for $\psi_{i,j}$ and $R_{i,j}=\psi_{i,j}^\ast R_{i,j-1}-n[m_{i,j}/n]E_{i,j}$, respectively. 
Let
$$
i_{\mathrm{bm}}=\max\{j\mid m_{i,j}>1\}, 
$$
be the number of blowing-ups occuring over $x_i$.
We may assume that $i_{\mathrm{bm}}\geq (i+1)_{\mathrm{bm}}$ for 
$i=1,\dots,l-1$ after rearranging the index if necessary.

Then the following two lemmas hold.

\begin{lem} \label{mslem}
If $C$ is contained in $R$, then the following hold.

\smallskip

\noindent
$(1)$ If $n\ge 3$, then $i_{\rm bm}= i_{\rm max}$ for all $i$. 
If $n=2$, then $i_{\rm bm}= i_{\rm max}$ $($resp.\ $i_{\rm bm}= i_{\rm max}+1)$ if and only if $m_{i,i_{\rm max}}\in 2\mathbb{Z}$ $($resp.\ $m_{i,i_{\rm max}}\in 2\mathbb{Z}+1)$.

\smallskip

\noindent
$(2)$ $m_{i,1}=\sum_{k=1}^{i_{\rm max}} s_{i,k}+1$ and $m_{i,i_{\rm bm}}\in n\mathbb{Z}$.

\smallskip

\noindent
$(3)$ $m_{i,j}\in n\mathbb{Z}$ $($resp. $n\mathbb{Z}+1)$ if and only if $m_{i,j+1}=\sum_{k=j+1}^{i_{\rm max}} s_{i,k}+1$ $($resp. $\sum_{k=j+1}^{i_{\rm max}} s_{i,k}+2)$.
\end{lem}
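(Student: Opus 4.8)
The plan is to reduce all three assertions to a single multiplicity formula for the strict transforms along the blow-up tower over $x_i$, and then read off (1), (2), (3) from it together with Lemma~\ref{multlem}(1). Write $R=C+R'$. Since the defining relation $R_{i,j}=\psi_{i,j}^\ast R_{i,j-1}-n[m_{i,j}/n]E_{i,j}$ only alters the coefficient of the exceptional curve and leaves the strict transforms of $C$ and $R'$ untouched, the strict transform of $R'$ evolves purely geometrically. The first step I would isolate is the identity
$$
\mathrm{mult}_{x_{i,j+1}}\!\big(\text{strict transform of }R'\big)=\sum_{k\ge j+1}s_{i,k},
$$
i.e.\ that a virtual local branch with $(D_\bullet C)_{x_i}=k$ contributes to the multiplicity at $x_{i,j+1}$ exactly when $k\ge j+1$. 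This is where the notion of virtual branch is designed to pay off: I would prove it by induction on $j$, running through the trichotomy (i), (ii.1)--(ii.3), using that a branch staying tangent to $C$ (case (ii.3)) keeps its multiplicity while those resolving as in (ii.1) or (ii.2) drop out, resp.\ partly drop out, precisely in accordance with the prescribed virtual intersection numbers. The case $j=0$ is just $\mathrm{mult}_{x_i}(R')=\sum_k s_{i,k}$, since each genuine branch of multiplicity $m$ accounts for $m$ virtual branches; this already gives the first assertion of (2).

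Given the multiplicity formula, assertion (3) is short bookkeeping. At $x_{i,j+1}$ the only curves of $R_{i,j}$ passing through are the strict transforms of $C$ and of $R'$ together with the last exceptional curve $E_{i,j}$, the earlier exceptional curves having been separated from the strict transform of $C$ after one further blow-up. Their multiplicities at $x_{i,j+1}$ are $1$, $\sum_{k\ge j+1}s_{i,k}$, and $1$, the last weighted by the coefficient of $E_{i,j}$ in $R_{i,j}$, which is $m_{i,j}-n[m_{i,j}/n]=m_{i,j}\bmod n$. Hence
$$
m_{i,j+1}=\sum_{k\ge j+1}s_{i,k}+1+(m_{i,j}\bmod n).
$$
By Lemma~\ref{multlem}(1) we have $m_{i,j}\bmod n\in\{0,1\}$, and the two possibilities are exactly $m_{i,j}\in n\mathbb{Z}$ and $m_{i,j}\in n\mathbb{Z}+1$, which is (3).

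The remaining parts follow by inspecting when the tower stops. For $1\le j\le i_{\max}$ the sum $\sum_{k\ge j}s_{i,k}$ is positive, so the above recursion forces $m_{i,j}\ge 2$; thus $i_{\mathrm{bm}}\ge i_{\max}$. For the last blow-up, if $m_{i,i_{\mathrm{bm}}}\in n\mathbb{Z}+1$ then $m_{i,i_{\mathrm{bm}}+1}=\sum_{k\ge i_{\mathrm{bm}}+1}s_{i,k}+2\ge 2$, contradicting the maximality of $i_{\mathrm{bm}}$; hence $m_{i,i_{\mathrm{bm}}}\in n\mathbb{Z}$, the second half of (2). Finally, since $\sum_{k\ge i_{\max}+1}s_{i,k}=0$, the recursion gives $m_{i,i_{\max}+1}=1+(m_{i,i_{\max}}\bmod n)$. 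If $m_{i,i_{\max}}\in n\mathbb{Z}$ this equals $1$ and the tower stops, so $i_{\mathrm{bm}}=i_{\max}$; if $m_{i,i_{\max}}\in n\mathbb{Z}+1$ it equals $2$, and here the dichotomy of Lemma~\ref{multlem}(1) is decisive: $2\in n\mathbb{Z}\cup(n\mathbb{Z}+1)$ is impossible for $n\ge 3$, forcing $m_{i,i_{\max}}\in n\mathbb{Z}$ and $i_{\mathrm{bm}}=i_{\max}$, while for $n=2$ it is allowed and one further blow-up, with $m_{i,i_{\max}+1}=2\in 2\mathbb{Z}$, terminates the tower, giving $i_{\mathrm{bm}}=i_{\max}+1$. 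This is (1).

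I expect the main obstacle to be the first step, the rigorous proof of the multiplicity identity for the strict transform of $R'$: one must check that the multiplicity of a genuine, possibly singular, branch under repeated blow-ups along the tangent direction of $C$ drops exactly as encoded by the virtual intersection numbers, which requires a careful induction through case (ii.3) and the verification that the multiplicity is preserved so long as tangency to $C$ persists. Once this is in place, the remaining assertions are elementary bookkeeping with the recursion and Lemma~\ref{multlem}(1).
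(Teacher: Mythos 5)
Your proposal is correct and follows essentially the same route as the paper: the paper's entire justification of (2) and (3) is the one-line observation that $m_{i,j}$ equals the number of virtual local branches of $R_{i,j-1}$ through $x_{i,j}$ (with ``$+1$'' from $C$), which is exactly your recursion $m_{i,j+1}=\sum_{k\ge j+1}s_{i,k}+1+(m_{i,j}\bmod n)$ combined with Lemma~\ref{multlem}(1), and (1) is then read off from when the tower terminates. Your version merely makes explicit the multiplicity identity for the strict transform of $R'$ and the contribution $m_{i,j}-n[m_{i,j}/n]\in\{0,1\}$ of the last exceptional curve, which the paper leaves implicit.
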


\begin{proof}
(1) is clear from the definitions of $i_{\rm max}$ and $i_{\rm bm}$. 
Since $m_{i,j}$ is the number of virtual local branches of $R_{i,j-1}$ through $x_{i,j}$ and Lemma \ref{multlem}, we have $m_{i,1}=\sum_{k=1}^{i_{\rm max}} s_{i,k}+1$ and $(3)$. 
In (2), ``$+1$'' is the contribution of $C$.
If $m_{i,i_{\mathrm{max}}}\in n\mathbb{Z}+1$, then $x_{i,i_{\max+1}}$ is a double point, 
which is not allowed when $n\geq 3$ by Lemma~\ref{multlem}.
Thus we have shown (2). 
\end{proof}

\begin{lem} \label{mslem2}
If $C$ is not contained in $R$, then the following hold.

\smallskip

\noindent
$(1)$ $i_{\rm bm}\le i_{\rm max}$. If $i_{\rm bm}< i_{\rm max}$, then $m_{i,i_{\rm bm}}\in n\mathbb{Z}$, and $s_{i,k}=0$ for $i_{\rm bm}<k<i_{\rm max}$, and $s_{i,i_{\rm max}}=1$.

\smallskip

\noindent
$(2)$ $m_{i,1}=\sum_{k=1}^{i_{\rm max}} s_{i,k}$.

\smallskip

\noindent
$(3)$ $m_{i,j}\in n\mathbb{Z}$ $($resp. $n\mathbb{Z}+1)$ if and only if $m_{i,j+1}=\sum_{k=j+1}^{i_{\rm max}} s_{i,k}$ $($resp. $\sum_{k=j+1}^{i_{\rm max}} s_{i,k}+1)$.
\end{lem}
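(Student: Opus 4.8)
The plan is to run the argument in close parallel to the proof of Lemma \ref{mslem}, the only structural change being that now $C$ is \emph{not} a component of $R$, so the proper transform of $C$ contributes nothing to the multiplicities along the tower; this is exactly what removes the ``$+1$'' of Lemma \ref{mslem}. The governing observation is the virtual-branch bookkeeping already set up above: a virtual local branch $D_\bullet$ with $(D_\bullet C)_{x_i}=\kappa$ is smooth at $x_i$ and tangent to $C$ to order $\kappa-1$, so its proper transform passes through the infinitely near point $x_{i,j}=(\text{proper transform of }C)\cap E_{i,j-1}$ precisely when $\kappa\ge j$, and then contributes $1$ to the multiplicity there. Summing over all virtual branches, the ``branch part'' of $m_{i,j}$ equals $\#\{\kappa\ge j\}=\sum_{k\ge j}s_{i,k}=\sum_{k=j}^{i_{\rm max}}s_{i,k}$.

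Next I would pin down the contribution of the preceding exceptional curve. Since $x_{i,j}$ is a smooth point of $E_{i,j-1}$, that curve contributes to $m_{i,j}$ exactly its multiplicity in $R_{i,j-1}$. By Lemma \ref{multlem}(2) this multiplicity is $m_{i,j-1}-n[m_{i,j-1}/n]$, which by Lemma \ref{multlem}(1) equals $0$ if $m_{i,j-1}\in n\mathbb{Z}$ and $1$ if $m_{i,j-1}\in n\mathbb{Z}+1$. Combining the two contributions, and noting that at the initial point there is no preceding exceptional curve, yields
\begin{equation*}
m_{i,j}=\sum_{k=j}^{i_{\rm max}}s_{i,k}+\left(m_{i,j-1}-n\left[\frac{m_{i,j-1}}{n}\right]\right)\quad(j\ge 2),\qquad m_{i,1}=\sum_{k=1}^{i_{\rm max}}s_{i,k}.
\end{equation*}
The equation for $m_{i,1}$ is assertion $(2)$, and shifting the index in the first formula gives $(3)$ at once: the trailing term is $0$ when $m_{i,j}\in n\mathbb{Z}$ and $1$ when $m_{i,j}\in n\mathbb{Z}+1$.

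For $(1)$ I would argue directly from the displayed formula. The exceptional-curve term is at most $1$, so for $j>i_{\rm max}$ the branch part vanishes and $m_{i,j}\le 1$; hence no blow-up occurs beyond the $i_{\rm max}$-th stage, giving $i_{\rm bm}\le i_{\rm max}$. Now suppose $i_{\rm bm}<i_{\rm max}$ and put $j=i_{\rm bm}+1$. By definition of $i_{\rm bm}$ we have $m_{i,j}\le 1$, while $j\le i_{\rm max}$ forces $\sum_{k=j}^{i_{\rm max}}s_{i,k}\ge s_{i,i_{\rm max}}\ge 1$. Comparing with the formula therefore forces both $\sum_{k=j}^{i_{\rm max}}s_{i,k}=1$ and $m_{i,i_{\rm bm}}-n[m_{i,i_{\rm bm}}/n]=0$. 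The first equality means $s_{i,i_{\rm max}}=1$ and $s_{i,k}=0$ for $i_{\rm bm}<k<i_{\rm max}$; the second means $m_{i,i_{\rm bm}}\in n\mathbb{Z}$.

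The main obstacle I anticipate lies in the second step rather than in the formal deductions: one must verify that each smooth virtual branch contributes \emph{exactly} $1$ to $m_{i,j}$, so that multiplicities simply add at the smooth point $x_{i,j}$ of $E_{i,j-1}$, and that the exceptional curve's share is cleanly $0$ or $1$ as dictated by Lemma \ref{multlem}; the crucial point peculiar to this case is that no contribution comes from the transform of $C$. Once the multiplicity formula is in hand, $(1)$--$(3)$ follow formally, the only point requiring separate comment being the boundary case $m_{i,1}=1$, where $i_{\rm bm}=0$ and there are no blow-ups over $x_i$.
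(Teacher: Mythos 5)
Your proof is correct and takes essentially the same route as the paper's: the paper disposes of (2) and (3) by invoking ``the same argument as in the proof of Lemma \ref{mslem}'' (i.e.\ counting virtual local branches of $R'$ through $x_{i,j}$ together with Lemma \ref{multlem}, now without the contribution of $C$), and then deduces (1) from the definition of $i_{\rm bm}$ and (3), exactly as you do. Your write-up merely makes the multiplicity bookkeeping explicit.
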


\begin{proof}
By the same argument as in the proof of Lemma \ref{mslem}, we have (2), (3). Suppose that $i_{\rm bm}< i_{\rm max}$. (1) follows from the definition of $i_{\rm bm}$ and (3).
\end{proof}

Put $t=R^{\prime}C$ and $c=\sum_{i=1}^l i_{\rm bm}$. 
If $C$ is a fiber $\Gamma$ of $\varphi$, then $t$ is the number of branch points $r$. 
If $C$ is an exceptional curve, then $t$ is the multiplicity of $R$ at the point to which $C$ is contracted. 
Clearly, $c$ is the number of blow-ups occuring over $C$ and $t=\sum_{i=1}^l \sum_{k=1}^{i_{\rm max}} ks_{i,k}$. 
When $C$ is contained in $R$ (resp. not contained in $R$), let $c_i$ be the number of $j=1,\ldots,i_{\rm bm}$ satisfying $m_{i,j}=\sum_{k=j+1}^{i_{\rm max}} s_{i,k}+2$ (resp.\ $m_{i,j}=\sum_{k=j+1}^{i_{\rm max}} s_{i,k}+1$). From Lemmas \ref{mslem} and \ref{mslem2}, $c_i$ can be regarded as the number of $j=1,\cdots,i_{\rm bm}-1$ such that $m_{i,j}\in n\mathbb{Z}+1$. Set $d_{i,j}=[m_{i,j}/n]$.

\begin{prop} \label{tcprop}
If $C$ is contained in $R$, the following equalities hold:
\begin{eqnarray}
t+c+\sum_{i=1}^l c_i&=&\sum_{i=1}^l \sum_{j=1}^{i_{\rm bm}} m_{i,j},\\
\frac{t+c}{n}&=&\sum_{i=1}^l \sum_{j=1}^{i_{\rm bm}} d_{i,j}.  \label{type1}
\end{eqnarray}
\end{prop}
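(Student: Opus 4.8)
The plan is to prove both identities by analyzing the blow-up sequence over a single intersection point $x_i$ and then summing over $i$. Throughout I write $S_{i,j}:=\sum_{k=j}^{i_{\rm max}}s_{i,k}$ for the number of virtual local branches at $x_i$ whose local intersection with $C$ is at least $j$, and I set $\delta_{i,j}=1$ if $m_{i,j}\in n\mathbb{Z}+1$ and $\delta_{i,j}=0$ if $m_{i,j}\in n\mathbb{Z}$ (this dichotomy is exhaustive by Lemma~\ref{multlem}(1)). Swapping the order of summation gives at the outset the bookkeeping identity $t=\sum_{i=1}^l\sum_{k=1}^{i_{\rm max}}ks_{i,k}=\sum_{i=1}^l\sum_{j=1}^{i_{\rm max}}S_{i,j}$, which will be matched against the blow-up data.

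First I would establish, for each fixed $i$, the per-point identity
\[
\sum_{j=1}^{i_{\rm bm}}m_{i,j}=\sum_{k=1}^{i_{\rm max}}ks_{i,k}+i_{\rm bm}+c_i.
\]
The engine is the recursion supplied by Lemma~\ref{mslem}: part~(2) gives $m_{i,1}=S_{i,1}+1$, while part~(3) can be rewritten uniformly as $m_{i,j+1}=S_{i,j+1}+1+\delta_{i,j}$. Summing this telescoping relation over $j$ yields $\sum_{j=1}^{i_{\rm bm}}m_{i,j}=\sum_{j=1}^{i_{\rm bm}}S_{i,j}+i_{\rm bm}+\sum_{j=1}^{i_{\rm bm}-1}\delta_{i,j}$, and the last sum is precisely $c_i$ by the reinterpretation of $c_i$ as the number of $j\le i_{\rm bm}-1$ with $m_{i,j}\in n\mathbb{Z}+1$ recorded just before the statement. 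Once $\sum_{j=1}^{i_{\rm bm}}S_{i,j}=\sum_k ks_{i,k}$ is verified (see below), summing over $i$ produces the first equality.

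For the second equality I would use that $nd_{i,j}=m_{i,j}-\delta_{i,j}$, which is just the restatement of $m_{i,j}\in n\mathbb{Z}$ or $n\mathbb{Z}+1$ from Lemma~\ref{multlem}(1). Summing over all $i,j$ gives $n\sum_{i,j}d_{i,j}=\sum_{i,j}m_{i,j}-\sum_{i,j}\delta_{i,j}$. The double sum of indicators differs from $\sum_i c_i$ only in the end terms $j=i_{\rm bm}$, and each of those vanishes because $m_{i,i_{\rm bm}}\in n\mathbb{Z}$ by Lemma~\ref{mslem}(2); hence $\sum_{i,j}\delta_{i,j}=\sum_i c_i$. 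Substituting the first equality then collapses the right-hand side to $(t+c+\sum_i c_i)-\sum_i c_i=t+c$, giving $\frac{t+c}{n}=\sum_{i,j}d_{i,j}$.

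The main obstacle will be the bookkeeping step $\sum_{j=1}^{i_{\rm bm}}S_{i,j}=\sum_{k=1}^{i_{\rm max}}ks_{i,k}$, since after swapping sums the coefficient of each $s_{i,k}$ is $\min(k,i_{\rm bm})$, and this must equal $k$ for every $k\le i_{\rm max}$. When $n\ge 3$ this is immediate from $i_{\rm bm}=i_{\rm max}$ (Lemma~\ref{mslem}(1)). When $n=2$ and $i_{\rm bm}=i_{\rm max}+1$, one checks that the extra index contributes only $S_{i,i_{\rm max}+1}=0$ to the sum (and, consistently, that the recursion forces $m_{i,i_{\rm max}+1}=S_{i,i_{\rm max}+1}+1+\delta_{i,i_{\rm max}}=2$), so the identification survives unchanged. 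This boundary case is the only place where genuine care is required.
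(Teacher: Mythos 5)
Your proof is correct and follows essentially the same route as the paper: the paper's entire argument is the per-point identity $\sum_{k}ks_{i,k}=\sum_{j}m_{i,j}-i_{\rm bm}-c_i$ deduced from Lemma~\ref{mslem}, which is exactly what you establish via the telescoping recursion $m_{i,j+1}=S_{i,j+1}+1+\delta_{i,j}$, before summing over $i$ and dividing by $n$. Your treatment of the $n=2$, $i_{\rm bm}=i_{\rm max}+1$ boundary case and the identification $\sum_{j}\delta_{i,j}=c_i$ (using $m_{i,i_{\rm bm}}\in n\mathbb{Z}$) supply details the paper leaves implicit, but the underlying argument is the same.
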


\begin{proof}
From Lemma \ref{mslem}, we get $\sum_{k=1}^{i_{\rm max}} ks_{i,k}=\sum_{j=1}^{i_{\rm bm}} m_{i,j}-i_{\rm bm}-c_i$, which gives us the desired two equalities.
\end{proof}

\begin{prop} \label{tcprop2}
If $C$ is not contained in $R$, the following equalities hold:
\begin{eqnarray}
t+\sum_{i=1}^l c_i&=&\sum_{i=1}^l \sum_{j=1}^{i_{\rm bm}} m_{i,j}+\sum_{i=1}^l (i_{\rm max}-i_{\rm bm}),\\
\frac{t}{n}&=&\sum_{i=1}^l \sum_{j=1}^{i_{\rm bm}} d_{i,j}+\frac{1}{n}\sum_{i=1}^l (i_{\rm max}-i_{\rm bm}+m_{i,i_{\rm bm}}-nd_{i,i_{\rm bm}}).  \label{type0}
\end{eqnarray}
\end{prop}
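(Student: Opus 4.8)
The plan is to follow the proof of Proposition~\ref{tcprop} in spirit: reduce both identities to a single per-branch equality and then sum over $i$. Starting from $t=\sum_{i=1}^l\sum_{k=1}^{i_{\rm max}}ks_{i,k}$ (Notation~\ref{siknotation}), it suffices to prove, for each fixed $i$, the identity
$$
\sum_{k=1}^{i_{\rm max}}ks_{i,k}=\sum_{j=1}^{i_{\rm bm}}m_{i,j}-c_i+(i_{\rm max}-i_{\rm bm}).
$$
Granting this, the first asserted equality is obtained by summing over $i$, and the second will follow after re-expressing $\sum_j m_{i,j}$ through $d_{i,j}=[m_{i,j}/n]$.

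To establish the per-branch identity I would first apply Abel summation. Writing $S_{i,j}=\sum_{k\ge j}s_{i,k}$, one has $\sum_{k=1}^{i_{\rm max}}ks_{i,k}=\sum_{j=1}^{i_{\rm max}}S_{i,j}$, and I would split this at $j=i_{\rm bm}$. For the head $1\le j\le i_{\rm bm}$, Lemma~\ref{mslem2}(2),(3) yields $m_{i,1}=S_{i,1}$ and $m_{i,j}=S_{i,j}+\delta_{i,j-1}$ for $2\le j\le i_{\rm bm}$, where $\delta_{i,j-1}\in\{0,1\}$ equals $1$ precisely when $m_{i,j-1}\in n\mathbb{Z}+1$. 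Telescoping and using that $\sum_{j=1}^{i_{\rm bm}-1}\delta_{i,j}=c_i$ (the reinterpretation of $c_i$ recorded just before Proposition~\ref{tcprop}) gives $\sum_{j=1}^{i_{\rm bm}}S_{i,j}=\sum_{j=1}^{i_{\rm bm}}m_{i,j}-c_i$.

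The genuinely new ingredient, and the only place I expect any real subtlety, is the tail $i_{\rm bm}<j\le i_{\rm max}$, which is absent in Proposition~\ref{tcprop} because there $i_{\rm bm}=i_{\rm max}$. Here I would invoke Lemma~\ref{mslem2}(1): when $i_{\rm bm}<i_{\rm max}$ it forces $s_{i,k}=0$ for $i_{\rm bm}<k<i_{\rm max}$ and $s_{i,i_{\rm max}}=1$, so $S_{i,j}=1$ for every $j$ in the tail, and the tail contributes exactly $i_{\rm max}-i_{\rm bm}$. Adding head and tail produces the per-branch identity, hence the first equality.

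For the second equality I would split $m_{i,j}=nd_{i,j}+(m_{i,j}-nd_{i,j})$, where each residue $m_{i,j}-nd_{i,j}$ lies in $\{0,1\}$ by Lemma~\ref{multlem}(1) and equals $\delta_{i,j}$. Since $\sum_{j=1}^{i_{\rm bm}-1}(m_{i,j}-nd_{i,j})=c_i$, one gets $\sum_{j=1}^{i_{\rm bm}}m_{i,j}=n\sum_{j=1}^{i_{\rm bm}}d_{i,j}+c_i+(m_{i,i_{\rm bm}}-nd_{i,i_{\rm bm}})$, and substituting into the per-branch identity cancels the two copies of $c_i$, leaving $\sum_k ks_{i,k}=n\sum_j d_{i,j}+(i_{\rm max}-i_{\rm bm}+m_{i,i_{\rm bm}}-nd_{i,i_{\rm bm}})$. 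Dividing by $n$ and summing over $i$ gives \eqref{type0}. The one point to watch is that the boundary residue $m_{i,i_{\rm bm}}-nd_{i,i_{\rm bm}}$ must not be discarded: unlike the contained case, $m_{i,i_{\rm bm}}$ need not lie in $n\mathbb{Z}$ when $i_{\rm bm}=i_{\rm max}$, and this surviving term is exactly what accounts for the extra fractional contribution distinguishing \eqref{type0} from its counterpart \eqref{type1}.
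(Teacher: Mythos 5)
Your proposal is correct and follows essentially the same route as the paper: the paper's proof consists precisely of the per-branch identity $\sum_{k}ks_{i,k}=\sum_{j=1}^{i_{\rm bm}}m_{i,j}+i_{\rm max}-i_{\rm bm}-c_i$ deduced from Lemma~\ref{mslem2}, summed over $i$. You have simply supplied the details the paper leaves implicit (the Abel summation, the head/tail split at $j=i_{\rm bm}$, and the bookkeeping of the residues $m_{i,j}-nd_{i,j}$), all of which check out.
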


\begin{proof}
From Lemma \ref{mslem2}, we get $\sum_{k=1}^{i_{\rm max}} ks_{i,k}=\sum_{j=1}^{i_{\rm bm}}m_{i,j}+i_{\rm max}-i_{\rm bm}-c_i$, which gives us the desired two equalities.
\end{proof}

We collect some properties of $m_{i,j}$.

\begin{lem} \label{mijlem}
The following hold:

\smallskip

\noindent
$(1)$ If $n\ge 3$, then $m_{i,j}\ge m_{i,j+1}$. 
If $n=2$, then $m_{i,j}+1\ge m_{i,j+1}$ with equality holding only if $m_{i,j-1}$ is even $($when $j>1)$ and $m_{i,j}$ is odd.

\smallskip

\noindent
$(2)$ If $m_{i,j-1}\in n\mathbb{Z}+1$ and $m_{i,j}\in n\mathbb{Z}$, then $m_{i,j} > m_{i,j+1}$.
\end{lem}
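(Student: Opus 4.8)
The plan is to reduce all three assertions to a single formula for the drop $m_{i,j}-m_{i,j+1}$ in terms of the singularity data $s_{i,k}$ together with the $n$-residues of the $m_{i,j}$, and then to read off each statement by a very short case analysis. The point of this reduction is that Lemmas \ref{mslem} and \ref{mslem2} already encode everything we need; we only have to package them correctly.

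First I would introduce the partial sums $S_j:=\sum_{k\ge j}s_{i,k}$ and the indicator $\epsilon_j$, defined to be $1$ if $m_{i,j}\in n\mathbb{Z}+1$ and $0$ if $m_{i,j}\in n\mathbb{Z}$; this dichotomy is exhaustive by Lemma \ref{multlem}(1), and I set $\epsilon_0=0$ by convention (no exceptional curve has yet appeared). Combining Lemma \ref{mslem}(2),(3) in the case $C\subset R$ and Lemma \ref{mslem2}(2),(3) in the case $C\not\subset R$, both situations are captured by the uniform formula $m_{i,j}=S_j+\delta+\epsilon_{j-1}$ for $j\ge 1$, where $\delta=1$ if $C\subset R$ and $\delta=0$ otherwise; here $\epsilon_{j-1}$ records exactly whether the exceptional curve $E_{i,j-1}$ has entered the branch divisor and thus contributes to the multiplicity at $x_{i,j}$. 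Subtracting consecutive terms, the constant $\delta$ cancels and I obtain the key identity
\[
m_{i,j}-m_{i,j+1}=s_{i,j}+\epsilon_{j-1}-\epsilon_j,
\]
valid in both cases.

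From this identity the easy parts follow at once. For (2), the hypotheses $m_{i,j-1}\in n\mathbb{Z}+1$ and $m_{i,j}\in n\mathbb{Z}$ read $\epsilon_{j-1}=1$ and $\epsilon_j=0$, so the right-hand side equals $s_{i,j}+1\ge 1$, giving $m_{i,j}>m_{i,j+1}$. For (1), since $s_{i,j}\ge 0$ and $\epsilon_{j-1},\epsilon_j\in\{0,1\}$, the right-hand side is $\ge -1$, which is precisely $m_{i,j}+1\ge m_{i,j+1}$; moreover equality $m_{i,j}+1=m_{i,j+1}$ forces simultaneously $s_{i,j}=0$, $\epsilon_{j-1}=0$ and $\epsilon_j=1$.

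The one point that needs genuine care---and the step I expect to be the crux---is excluding this equality case when $n\ge 3$. In the equality case $\epsilon_j=1$ means $m_{i,j}\in n\mathbb{Z}+1$, and feeding $s_{i,j}=0$, $\epsilon_{j-1}=0$ back into the identity yields $m_{i,j+1}=m_{i,j}+1\in n\mathbb{Z}+2$. For $n\ge 3$ the class $n\mathbb{Z}+2$ is disjoint from $n\mathbb{Z}\cup(n\mathbb{Z}+1)$, contradicting Lemma \ref{multlem}(1); hence equality cannot occur and $m_{i,j}\ge m_{i,j+1}$. For $n=2$ there is no such contradiction, and the two equality conditions $\epsilon_j=1$ and $\epsilon_{j-1}=0$ translate verbatim into ``$m_{i,j}$ is odd'' and ``$m_{i,j-1}$ is even (when $j>1$)'', which is exactly the stated refinement. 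The only bookkeeping I would double-check is that the formula is applied within the correct range $1\le j\le i_{\rm bm}$ and that the boundary behaviour at $j=i_{\rm bm}$ (where Lemma \ref{mslem2}(1) pins down the tail of the $s_{i,k}$ when $C\not\subset R$) is consistent with $\epsilon_0=0$.
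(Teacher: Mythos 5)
Your proof is correct and follows essentially the same route as the paper: the identity $m_{i,j}-m_{i,j+1}=s_{i,j}+\epsilon_{j-1}-\epsilon_j$ is exactly the paper's observation that the difference equals $s_{i,j}-1$, $s_{i,j}$, or $s_{i,j}+1$ (extracted from Lemmas \ref{mslem} and \ref{mslem2}), and the exclusion of the equality case for $n\ge 3$ via $m_{i,j+1}\in n\mathbb{Z}+2$ contradicting Lemma \ref{multlem} is the paper's argument verbatim. Your $\epsilon_j$ bookkeeping merely makes the case analysis more explicit.
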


\begin{proof}
If $m_{i,j} < m_{i,j+1}$, then $s_{i,j}=0$ and $m_{i,j}+1=m_{i,j+1}$, since $m_{i,j}-m_{i,j+1}=s_{i,j}-1, s_{i,j}$, or $s_{i,j}+1$. 
Moreover, we have $m_{i,j}\in n\mathbb{Z}+1$ from Lemmas \ref{mslem} and \ref{mslem2}, and then $m_{i,j+1}\in n\mathbb{Z}+2$,
which is impossible when $n\ge 3$ from Lemma~\ref{multlem}.
If $n=2$ and $m_{i,j}+1=m_{i,j+1}$, then $m_{i,j-1}$ is even since $s_{i,j}=0$.

If $m_{i,j-1}\in n\mathbb{Z}+1$ and $m_{i,j}\in n\mathbb{Z}$, then $m_{i,j}-m_{i,j+1}=s_{i,j}+1 > 0$ by Lemmas \ref{mslem} and \ref{mslem2}. Hence $(2)$ follows.
\end{proof}

In particular, we have non-trivial $\alpha_k$ only when $k\leq [r/2n]$ by Lemma~\ref{eltrlem}.

\begin{defn}\normalfont
By using the datum $\{m_{i,j}\}$, one can construct a diagram with entries 
$(x_{i,j},m_{i,j})$ as in Table \ref{mij}. 
We call it the {\it singularity diagram} of $C$.

\begin{table}[H] 
\begin{center}
\caption{singularity diagram}
 \begin{tabular}{cccc} \label{mij}
  $\#$& & & \\ \cline{1-1}
 \multicolumn{1}{|c|}{$(x_{1,1_{\rm bm}},m_{1,1_{\rm bm}})$}& & & \\ \cline{1-1}
 \multicolumn{1}{|c|}{}& & & $\#$\\ \cline{4-4}                     
 \multicolumn{1}{|c|}{$\cdots$}& & & \multicolumn{1}{|c|}{$(x_{l,l_{\rm bm}},m_{l,l_{\rm bm}})$}\\ \cline{4-4}
 \multicolumn{1}{|c|}{}&$\cdots$ & & \multicolumn{1}{|c|}{$\cdots$}\\ \cline{1-4}
 \multicolumn{1}{|c|}{$(x_{1,1},m_{1,1})$}&$\cdots$ & & \multicolumn{1}{|c|}{$(x_{l,1},m_{l,1})$}\\ \cline{1-4}
 \end{tabular}
\end{center}
\end{table}

\noindent 
On the top of the $i$-th column (indicated by $\#$ in Table~\ref{mij}), we write $(i_{\rm max}-i_{\rm bm})$ when $i_{\rm bm}< i_{\rm max}$ and leave it blank when $i_{\rm bm}= i_{\rm max}$. 
We say that the singularity diagram of $C$ is {\em of type} $0$ (resp. {\em of type} $1$) if $C\not \subset R$ (resp. $C\subset R$).
\end{defn}

\begin{defn}\normalfont
Let $\widetilde{\psi}=\psi_1\circ \cdots \circ \psi_N\colon \widetilde{W}\to W$ be a decomposition of the natural birational morphism into a series of blow-ups. We may assume that $\psi_1$,\ldots, $\psi_{N_p}$ are all blow-ups at  points over $p\in B$ for simplicity.
Let $C_1$ be the fiber $\Gamma_p$ of $\varphi$ over $p$ and $C_k$ the exceptional curve for $\psi_{k-1}$ for $k=2,\ldots, N_p+1$. 
Let $\mathcal{D}_k$ be the singularity diagram of $C_k$. 
We call $\mathcal{D}_1, \mathcal{D}_2,\ldots, \mathcal{D}_{N_p+1}$ a {\em sequence of singularity diagrams associated with} $\Gamma_p$.

Put $t^k:=R'C_k$, $l^k:=\#(R'\cap C_k)$ and  let $(x_{i,j}^k,m_{i,j}^k)$, $i=1,\dots,l^k$, $j=1,\dots,i_{\mathrm{bm}}$ denote entries of $\mathcal{D}_k$.
For any $1\leq k \leq N_p+1$ and $1\leq i\leq l^k$ for which $i_{\mathrm{bm}}>0$, there exists a uniquely determined index $k'$ such that 
$C_{k'}$ is the exceptional curve for 
the blow-up at $x_{i,1}^k$.
That is, if we put $I:=\{(k,i)\mid 1\leq i\leq N_p+1,\; 1\leq i\leq l^k \mbox{ and }
i_{\mathrm{bm}}>0\}$ then we get a well-defined map $\Phi: I\to \{2,\dots, N_p+1\}$ that sends $(k,i)$ to $k'$.
It is clear that $\Phi$ is bijective and $k<k'$.
When $\nu=\Phi(\mu,i)$ for some $(\mu,i)\in I$, we say that 
 $\mathcal{D}_{\mu}$ forks to $\mathcal{D}_{\nu}$ and write  
$\mathcal{D}_\mu \overset{i}{\rightsquigarrow} \mathcal{D}_{\nu}$.
\end{defn}

From the definition of sequences of singularity diagrams, we clearly have the following:

\begin{lem} \label{connlem}
With the above notation, let $\mathcal{D}_\mu \overset{i}{\rightsquigarrow} \mathcal{D}_{\nu}$. 
Then, $t^\nu=m_{i,1}^{\mu}$ and $\mathcal{D}_\nu$ is of type $k$ if $t^\nu\equiv k$ $(\text{mod}\  n)$.
Moreover, the following hold.

\smallskip

\noindent
$(1)$ For every $1\le \mu'\le \mu$, $i'$, $j'$ satisfying 
$(x_{i',j'}^{\mu'},m_{i',j'}^{\mu'})=(x_{i,1}^{\mu},m_{i,1}^{\mu})$, one 
 of the following holds:

\begin{itemize}

\item[$({\rm a})$] If $j'<i'_{\mathrm{bm}}$, then $\mathcal{D}_\nu$ has $(x_{i',j'+1}^{\mu'},m_{i',j'+1}^{\mu'})$ as an entry in the bottom row.

\smallskip

\item[$({\rm b})$] If $j'=i'_{\mathrm{bm}}$ and $\mathcal{D}_{\mu'}$ is of type $1$, then $\mathcal{D}_\nu$ has $(1)$ as an entry in the bottom row.

\smallskip

\item[$({\rm c})$] If $j'=i'_{\mathrm{bm}}$, $\mathcal{D}_{\mu'}$ is of type $0$ and $i'_{\mathrm{max}}-i'_{\mathrm{bm}}\ge 2$, then $\mathcal{D}_\nu$ has $(1)$ as an entry in the bottom row.

\smallskip

\item[$({\rm d})$] If $j'=i'_{\mathrm{bm}}$, $\mathcal{D}_{\mu'}$ is of type $0$ and $i'_{\mathrm{max}}-i'_{\mathrm{bm}}=1$, then $\mathcal{D}_\nu$ has $(s)$ for some $s\ge 1$ as an entry in the bottom row.

\end{itemize}

\smallskip

\noindent
$(2)$ In $(1)$, distinct $\mathcal{D}_{\mu'}$'s produce distinct bottom 
 entries of $\mathcal{D}_\nu$.

\smallskip

\noindent
$(3)$ In $(1)$, 
$\mathcal{D}_{\mu'}$ and $\mathcal{D}_\nu$ have no entries in common except 
 the entry appeared in the case $({\rm a})$.
\end{lem}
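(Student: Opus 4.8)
The plan is to study the single blow-up $\psi$ at $x:=x_{i,1}^\mu$, whose exceptional curve is $C_\nu=E$, and to read off the bottom row of $\mathcal{D}_\nu$ from the behaviour of the branch divisor along $E$. Write $m:=m_{i,1}^\mu=\mathrm{mult}_x(R)$ and let $\widehat{R}$ be the strict transform of $R$ under $\psi$. By Lemma~\ref{multlem}(2), $R_\nu=\psi^*R-n[m/n]E=\widehat{R}+(m-n[m/n])E$, so $\widehat{R}\cdot E=m$; since $R'=\widehat{R}$ whether $E\not\subset R_\nu$ (type $0$) or $E\subset R_\nu$ (type $1$), we obtain $t^\nu=R'\cdot E=m=m_{i,1}^\mu$. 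By Lemma~\ref{multlem}(1), $E\subset R_\nu$ holds exactly when $m\in n\mathbb{Z}+1$, so $\mathcal{D}_\nu$ is of type $1$ if $t^\nu\equiv 1$ and of type $0$ if $t^\nu\equiv 0\pmod n$, as asserted.

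For part~(1), observe that an entry $(x_{i',j'}^{\mu'},m_{i',j'}^{\mu'})$ equals $(x,m)$ if and only if the strict transform of $C_{\mu'}$ passes through $x$ at the $j'$-th infinitely near level of its $i'$-th column; thus the relevant $\mu'$ are exactly those for which $C_{\mu'}$ is a curve through $x$. Writing $\overline{C}_{\mu'}$ for the strict transform of $C_{\mu'}$ under $\psi$, such a $C_{\mu'}$ meets $E$ in the single point $x':=\overline{C}_{\mu'}\cap E$ cut out by its tangent direction, and this $x'$ is the bottom entry of $\mathcal{D}_\nu$ contributed by $\mathcal{D}_{\mu'}$. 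The four cases are distinguished by the local picture at $x'$. If $j'<i'_{\mathrm{bm}}$ (case (a)) the chain along $C_{\mu'}$ genuinely continues, so $x'=x_{i',j'+1}^{\mu'}$ and the recorded multiplicity is $m_{i',j'+1}^{\mu'}$. If $j'=i'_{\mathrm{bm}}$ the chain terminates, and one must compute $\mathrm{mult}_{x'}(R')$ by counting the virtual branches of $R$ surviving to $x'$, passing between the $m_{i',j}$ and the branch data $s_{i',k}$ by Lemmas~\ref{mslem} and~\ref{mslem2}.

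This multiplicity count in the three terminal cases is the heart of the matter and the step I expect to be the main obstacle. If $\mathcal{D}_{\mu'}$ is of type~$1$ (case (b)) then $C_{\mu'}\subset R$, so $\overline{C}_{\mu'}$ itself is a smooth local branch of $R'$ through $x'$ and the bottom entry is $(1)$. If $\mathcal{D}_{\mu'}$ is of type~$0$, Lemma~\ref{mslem2}(1) gives $s_{i',k}=0$ for $i'_{\mathrm{bm}}<k<i'_{\mathrm{max}}$ and $s_{i',i'_{\mathrm{max}}}=1$; since a branch reaches $x'$ only if its intersection with $C_{\mu'}$ exceeds $i'_{\mathrm{bm}}$, only the unique branch of intersection $i'_{\mathrm{max}}$ does so. When $i'_{\mathrm{max}}-i'_{\mathrm{bm}}\ge 2$ (case (c)) that branch is still tangent to $\overline{C}_{\mu'}$ at $x'$ and supplies a single smooth branch, giving $(1)$; when $i'_{\mathrm{max}}-i'_{\mathrm{bm}}=1$ (case (d)) it becomes transverse to $\overline{C}_{\mu'}$, so $x'$ is the base of a fresh column of $\mathcal{D}_\nu$ with some multiplicity $s\ge 1$ not determined by the present data. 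The delicate point is to keep the residual tangency order $i'_{\mathrm{max}}-i'_{\mathrm{bm}}$ correctly synchronized with the local configurations (i), (ii.1)--(ii.3) defining virtual branches, so that its value after blowing up $x$ is computed correctly in each case.

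Parts~(2) and~(3) then follow from the normal crossing structure of the exceptional configuration $\bigcup_{\mu'}C_{\mu'}$, which is preserved under point blow-ups. Since each $C_{\mu'}$ is a smooth rational curve and any two meet transversally, distinct curves through $x$ have distinct tangent directions and hence meet $E$ at distinct points; moreover each $C_{\mu'}$ passes through any point in at most one branch, so at most one entry of a given $\mathcal{D}_{\mu'}$ coincides with $(x,m)$. This yields (2). For (3), the only point $\mathcal{D}_{\mu'}$ and $\mathcal{D}_\nu$ can share is $x'=\overline{C}_{\mu'}\cap E$, and $x'$ is an entry of $\mathcal{D}_{\mu'}$ precisely when $j'+1\le i'_{\mathrm{bm}}$, i.e.\ exactly in case (a); in the terminal cases (b)--(d) the point $x'$ lies beyond the $i'$-th column of $\mathcal{D}_{\mu'}$ and is therefore new to $\mathcal{D}_\nu$.
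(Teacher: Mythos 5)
Your proposal is correct, and it fills in an argument the paper does not actually give: Lemma~\ref{connlem} is introduced with ``From the definition of sequences of singularity diagrams, we clearly have the following,'' so the author treats the whole statement as an immediate unwinding of the definitions. Your write-up is exactly that unwinding, carried out carefully: the computation $t^\nu=\widehat{R}\cdot E=m_{i,1}^\mu$ via Lemma~\ref{multlem}, the type determination from whether $E\subset R_\nu$, the identification $x'=x_{i',j'+1}^{\mu'}$ in case (a), and the use of Lemma~\ref{mslem2}(1) to see that in the terminal type-$0$ cases exactly one virtual branch survives to $x'$ (so $\mathrm{mult}_{x'}(R_\nu)=1$), tangent to $\overline{C}_{\mu'}$ and hence transverse to $E$ when $i'_{\max}-i'_{\rm bm}\ge 2$, but transverse to $\overline{C}_{\mu'}$ and of undetermined contact with $E$ when $i'_{\max}-i'_{\rm bm}=1$. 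Parts (2) and (3) do come down to the simple normal crossing structure of the exceptional configuration, as you say. The only imprecision is terminological: the entry $(s)$ in case (d) records the contact order $i_{\max}=s$ of the single smooth surviving branch with $C_\nu$ (the multiplicity of $R_\nu$ at $x'$ is $1$, as your own count via $m_{i',i'_{\rm bm}+1}=\sum_{k>i'_{\rm bm}}s_{i',k}=1$ shows), not a ``multiplicity $s$''; this does not affect the argument.
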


\begin{defn}
Fix  $n\ge 2$ and $t\in n\mathbb{Z}_{>0}\cup (n\mathbb{Z}_{>0}+1)$.
Suppose that we are given the following datum.

\smallskip

\noindent
(i) Non-negative integers $c$, $l$ and $i_{\mathrm{bm}}$ for $i=1,\dots,l$ satisfying $l>0$, $c=\sum_{i=1}^l i_{\mathrm{bm}}$ and $i_{\mathrm{bm}}\ge (i+1)_{\mathrm{bm}}$ for $i=1,\dots,l-1$.

\smallskip

\noindent
(ii) A non-empty set $\mathcal{S}$ and $c$ pairs $(x_{i,j},m_{i,j})\in \mathcal{S}\times (n\mathbb{Z}_{>0}\cup (n\mathbb{Z}_{>0}+1))$ for $i=1,\ldots,l$, $j=1,\ldots,i_{\mathrm{bm}}$ such that $x_{i,j}\neq x_{i',j'}$ if $(i,j)\neq (i',j')$, and $\{m_{i,j}\}$ satisfies Lemma~\ref{mijlem}.
Moreover, one of the following holds.

\smallskip

(ii,0) There are integers $i_{\mathrm{max}}$ satisfying 
$i_{\mathrm{max}}\ge i_{\mathrm{bm}}$ for $i=1,\dots,l$ such that $i_{\mathrm{max}}$ and $d_{i,j}:=\displaystyle{\left[\frac{m_{i,j}}{n}\right]}$ 
satisfy \eqref{type0}.

\smallskip

(ii,1) $m_{i,i_{\mathrm{bm}}}\in n\mathbb{Z}$ and \eqref{type1} holds.

\smallskip

\noindent
Then we can construct a diagram $\mathcal{D}$ as in Table~\ref{mij}.
We call it an {\em abstract singularity diagram} for $(n,t)$.
For $k=0,1$, the diagram $\mathcal{D}$ is said to be {\em of type $k$} when  (ii,k) holds.

\end{defn}

\begin{defn}
A sequence $\mathcal{D}_1,\mathcal{D}_2,\dots,\mathcal{D}_N$ of abstract singularity diagrams is said to be {\em admissible} if there exists a bijection $\Phi\colon I\to \{2,\dots,N\}$ such that $\mu$ and $\nu:=\Phi(\mu,i)$ satisfy $\mu<\Phi(\mu,i)$ and $(1)$, $(2)$, $(3)$ of Lemma~\ref{connlem} for any $(\mu,i)\in I$, 
where $I:=\{(\mu,i)|1\le \mu\le N,\; 1\le i\le l^\mu\; \text{and}\; i_{\mathrm{bm}}>0\}$.

\end{defn}

\begin{defn}
Two abstract singularity diagrams $\mathcal{D}$ and $\mathcal{D}'$ are {\em equivalent} if $\mathcal{D}$ and $\mathcal{D}'$ are the same up to elements $x_{i,j}\in \mathcal{S}$ and a replacement of columns with the same height.
Two admissible sequences $\mathcal{D}_1$,\dots,$\mathcal{D}_N$ and $\mathcal{D}'_1$,\dots,$\mathcal{D}'_N$ of abstract singularity diagrams are {\em equivalent}, if there exists a bijection $\Psi\colon \{1,\dots,N\}\to \{1,\dots,N\}$ with $\Psi(1)=1$ such that $\mathcal{D}_k$ is equivalent to $\mathcal{D}_{\Psi(k)}$ for any $1\le k\le N$ and $\mathcal{D}_\mu \overset{i}{\rightsquigarrow} \mathcal{D}_\nu$ if and only if $\mathcal{D}_{\Psi(\mu)} \overset{i}{\rightsquigarrow} \mathcal{D}_{\Psi(\nu)}$ for any $\mu<\nu$ and $1\le i\le l^\mu$ (after a suitable replacement of columns of $D_\mu$ with the same height).

\end{defn}
It is clear that any singularity diagram is an abstract singularity diagram,
and any sequence $\mathcal{D}_1$,\dots,$\mathcal{D}_N$ of singularity diagrams associated with $\Gamma_p$ is an admissible sequence of abstract singularity diagrams with $t^1=r$.
We are able to classify all fibers of primitive cyclic covering fibrations of type $(g,0,n)$ by classifying equivalent classes of admissible sequences of abstract singularity diagrams with $t^1=r$.
Indeed, any fiber $F_p$ of a primitive cyclic covering fibration $f$ of type $(g,0,n)$ can be reconstructed by a sequence of singularity diagrams associated with $\Gamma_p$ via the $n$-cyclic covering $\widetilde{\theta}\colon \widetilde{S}\to \widetilde{W}$.

\section{Fibers of primitive cyclic covering fibrations of type $(4,0,3)$}

In this section, we prove the following theorem:
 
\begin{thm} \label{classthm}
All fibers of primitive cyclic covering fibrations of type $(4,0,3)$ are classified into $32$ classes of types $(0_{i_1,\ldots,i_m})$, $({\rm I\hspace{-.1em}I\hspace{-.1em}I}_{i,j})$, $({\rm I\hspace{-.1em}V}_k)$, $({\rm V}_{i,j})$, $({\rm V\hspace{-.1em}I}_k)$, $({\rm V\hspace{-.1em}I\hspace{-.1em}I}_k)$ and $({\rm V\hspace{-.1em}I\hspace{-.1em}I\hspace{-.1em}I})$ plus $9$ classes of types $({\rm I}_{i,j,l})$ and $({\rm I\hspace{-.1em}I}_{k,l})$ up to $(-2)$-curves, as listed in the following tables. Moreover, the local signature of any singular fiber is negative.
\end{thm}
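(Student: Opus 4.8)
The plan is to turn the geometric classification into the purely combinatorial problem set up in §2 and then read off the numerical invariants type by type. By the discussion following the definition of admissible sequences, every fiber $F_p$ of a type $(4,0,3)$ fibration is reconstructed, up to $(-2)$-curves, from an admissible sequence of abstract singularity diagrams with $t^1=r$, and conversely every such sequence arises; so classifying fibers is the same as enumerating equivalence classes of admissible sequences for $(n,t)=(3,6)$. For $(g,n)=(4,3)$ one has $r=6$, whence $[r/2n]=1$ and only $\alpha_0,\alpha_1$ can be nonzero. Moreover Lemma~\ref{eltrlem} bounds the relevant multiplicities by $\mathrm{mult}_x(R_h)\le 3$, and Lemma~\ref{mijlem} forces each column $\{m_{i,j}\}\subset 3\mathbb{Z}\cup(3\mathbb{Z}+1)$ to be (essentially) weakly decreasing. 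These constraints make the enumeration finite in each step.

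First I would list every possibility for the top diagram $\mathcal{D}_1$ attached to $\Gamma_p$, separating the type-$0$ case ($\Gamma_p\not\subset R$) from the type-$1$ case ($\Gamma_p\subset R$), and using Propositions~\ref{tcprop} and~\ref{tcprop2} together with $t^1=6$ and the multiplicity bounds to pin down the finitely many admissible data $(l,\{i_{\mathrm{bm}}\},\{m_{i,j}\})$. Then I would propagate: for each column with $i_{\mathrm{bm}}>0$, the forking relation $\mathcal{D}_\mu\overset{i}{\rightsquigarrow}\mathcal{D}_\nu$ of Lemma~\ref{connlem} produces a successor diagram with $t^\nu=m_{i,1}^\mu\le 4$, whose admissible shapes are again controlled by Propositions~\ref{tcprop},~\ref{tcprop2} and Lemma~\ref{connlem}(1)--(3). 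Since every $t^\nu$ is bounded by $4$ and the multiplicities decrease, only finitely many ``essential'' diagrams occur; the sole source of unbounded behaviour is the $m=1$ tails (Lemma~\ref{mslem2}(1) with $s_{i,i_{\max}}=1$), which yield chains of $(-2)$-curves and hence the one-parameter families indexed by $i,j,l$. Carrying out this finite but lengthy bookkeeping should produce exactly the $32$ rigid classes together with the $9$ families, matching the tables.

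With the list in hand, for each type I would read off $\alpha_0,\alpha_1,\varepsilon$ directly from the diagram sequence ($\alpha_1$ counts the entries of multiplicity $3$ or $4$, $\varepsilon$ counts the vertical $(-3)$-curves of $\widetilde{R}$ over $p$, i.e.\ the blow-ups in $\rho$, and $\alpha_0$ is the ramification contribution) and substitute into the local signature of Corollary~\ref{signcor}, which for $(g,n)=(4,3)$, $r=6$ reduces to
$$
\sigma(F_p)=-\tfrac{16}{15}\alpha_0(F_p)-\tfrac{7}{5}\alpha_1(F_p)+\tfrac{47}{15}\varepsilon(F_p).
$$
The delicate point, and the real obstacle, is that the coefficient of $\varepsilon$ is \emph{positive}, so negativity is not formal: one must show that each $(-1)$-curve is ``paid for'' by enough singularity index, i.e.\ that $16\alpha_0+21\alpha_1>47\varepsilon$ for every singular type. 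This is precisely where the full classification is indispensable, since the inequality is tightest for the types carrying the most $(-1)$-curves (such as type $\mathrm{VIII}$). I would therefore verify it case by case from the tabulated invariants; because $\sigma(F_p)$ depends only on $\alpha_0,\alpha_1,\varepsilon$, which are unaffected by the $(-2)$-curves quotiented out in the classification, checking the listed representatives suffices to conclude negativity for every singular fiber.
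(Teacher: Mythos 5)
Your proposal follows essentially the same route as the paper: reduce to enumerating equivalence classes of admissible sequences of abstract singularity diagrams for $(n,t)=(3,6)$ using the multiplicity bound of Lemma~\ref{eltrlem} and the forking structure of Lemma~\ref{connlem}, carry out the finite bookkeeping in stages (top diagrams, then sequences with small $t^1$, then the full $t^1=6$ case), reconstruct each fiber through the cyclic covering, and verify $\sigma(F_p)<0$ case by case from the tabulated $(\alpha_0,\alpha_1,\varepsilon)$. Your reduced formula $\sigma(F_p)=-\tfrac{16}{15}\alpha_0-\tfrac{7}{5}\alpha_1+\tfrac{47}{15}\varepsilon$ is correct, and your remark that the positive coefficient of $\varepsilon$ makes the negativity genuinely dependent on the classification is exactly the point the paper's tables settle.
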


To explain the meaning of indices attached to (0), (I), $\dots$, (V\hspace{-.1em}I\hspace{-.1em}I\hspace{-.1em}I) and the geometric features of the corresponding fibers, it would be convenient to have a list of some numerical datum for each type of singular fibers.

\setlength\unitlength{0.22cm}
\begin{table}[H]
\begin{center}
\begin{tabular}{|c|c|c|c|}
\multicolumn{4}{c}{} \\ \hline
 & $(0_{i_1,\ldots,i_m})$ & $({\rm I}_{i,j,l})$ & $({\rm I\hspace{-.1em}I}_{k,l})$  \\ \hline
 \begin{minipage}{17mm}
 \begin{center}
 \ \\
 The\\
shape\\
of $F_p$\\
$(k=0)$
 \end{center}
 \end{minipage} &
\begin{minipage}{40mm}
   \begin{center}
    \begin{picture}(0,6)
 \put(0,-5){\line(0,1){10}}
 \put(-0.5075,0.25){$\bullet$}
 \put(-0.5075,1.85){$\bullet$}
 \put(-0.5075,3.45){$\bullet$}
 \put(-0.5075,-1.35){$\bullet$}
 \put(-0.5075,-2.95){$\bullet$}
 \put(-0.5075,-4.55){$\bullet$}
    \end{picture}
  \end{center}
 \end{minipage}&
\begin{minipage}{40mm}
  \begin{center}
   \begin{picture}(0,6)
 \put(-5,-5){\line(0,1){10}}
 \put(-5.5,0.5){\line(2,1){2.7}}
 \put(-5.5,2){\line(2,1){2.7}}
 \put(-5.5,3.5){\line(2,1){2.7}}
 \put(-1.1,0.5){\line(-2,1){2.7}}
 \put(-1.1,2){\line(-2,1){2.7}}
 \put(-1.1,3.5){\line(-2,1){2.7}}
 \put(5,-5){\line(0,1){10}}
 \put(5.5,0.5){\line(-2,1){2.7}}
 \put(5.5,2){\line(-2,1){2.7}}
 \put(5.5,3.5){\line(-2,1){2.7}}
 \put(1.1,0.5){\line(2,1){2.7}}
 \put(1.1,2){\line(2,1){2.7}}
 \put(1.1,3.5){\line(2,1){2.7}}
 \put(-5.5075,-3.05){$\bullet$}
 \put(-5.5075,-1.55){$\bullet$}
 \put(-5.5075,-4.55){$\bullet$}
 \put(4.4925,-3.05){$\bullet$}
 \put(4.4925,-1.55){$\bullet$}
 \put(4.4925,-4.55){$\bullet$}
 \put(-0.9,0.5){$\cdots$}
 \put(-0.9,2){$\cdots$}
 \put(-0.9,3.5){$\cdots$}
\put(-6.5,-4){$I$}
\put(5.5,-4){$J$}
   \end{picture}
  \end{center}
 \end{minipage}&
\begin{minipage}{40mm}
  \begin{center}
   \begin{picture}(0,6)
 \put(-5.1,-5){\line(0,1){10}}
 \put(-4.9,-5){\line(0,1){10}}
 \put(-5.5,0.5){\line(2,1){2.7}}
 \put(-5.5,2){\line(2,1){2.7}}
 \put(-5.5,3.5){\line(2,1){2.7}}
 \put(-1.1,0.5){\line(-2,1){2.7}}
 \put(-1.1,2){\line(-2,1){2.7}}
 \put(-1.1,3.5){\line(-2,1){2.7}}
 \put(5.8,0.5){\line(-2,1){2.7}}
 \put(5.8,2){\line(-2,1){2.7}}
 \put(5.8,3.5){\line(-2,1){2.7}}
 \put(1.4,0.5){\line(2,1){2.7}}
 \put(1.4,2){\line(2,1){2.7}}
 \put(1.4,3.5){\line(2,1){2.7}}
  \put(-5.5,0.7){\line(2,1){2.7}}
 \put(-5.5,2.2){\line(2,1){2.7}}
 \put(-5.5,3.7){\line(2,1){2.7}}
 \put(-1.1,0.7){\line(-2,1){2.7}}
 \put(-1.1,2.2){\line(-2,1){2.7}}
 \put(-1.1,3.7){\line(-2,1){2.7}}
 \put(5.8,0.7){\line(-2,1){2.7}}
 \put(5.8,2.2){\line(-2,1){2.7}}
 \put(5.8,3.7){\line(-2,1){2.7}}
 \put(1.4,0.7){\line(2,1){2.7}}
 \put(1.4,2.2){\line(2,1){2.7}}
 \put(1.4,3.7){\line(2,1){2.7}}
\put(3.3,0.6){\line(2,1){2.7}}
 \put(3.3,2.1){\line(2,1){2.7}}
 \put(3.3,3.6){\line(2,1){2.7}}
\put(4.3,0.1){\line(2,1){2.7}}
 \put(4.3,1.6){\line(2,1){2.7}}
 \put(4.3,3.1){\line(2,1){2.7}}
 \put(-5.5075,-3.05){$\bullet$}
 \put(-5.5075,-1.55){$\bullet$}
 \put(-5.5075,-4.55){$\bullet$}
 \put(-0.9,0.5){$\cdots$}
 \put(-0.9,2){$\cdots$}
 \put(-0.9,3.5){$\cdots$}
\put(-4.5,-4){$K$}
   \end{picture}
  \end{center}
 \end{minipage}
\\
 & & & \\
 & & & \\ \hline
 $\alpha_0(F_p)$ & $k$ $(k=0,\ldots,5)$ & $k$ $(k=0,\ldots,4)$ & $3+k$ $(k=0,1,2)$
\\ \hline
 $\alpha_1(F_p)$ & $0$ & $l+1$ $(l\ge 0)$ & $l+2$ $(l\ge 0)$ \\ \hline
 $\varepsilon(F_p)$ & $0$ & $0$ & $0$ \\ \hline
 $\mathrm{Ind}(F_p)$ & 0 & $\frac{3}{7}(l+1)$ & $\frac{3}{7}(l+2)$ \\ \hline
 $\sigma(F_p)$ & $-\frac{16}{15}k$ & $-\frac{16}{15}k-\frac{7}{5}l-\frac{7}{5}$ & $-\frac{16}{15}k-\frac{7}{5}l-6$ \\ \hline
\end{tabular}
\end{center}
\end{table}

\begin{table}[H]
\begin{center}
\begin{tabular}{|c|c|c|c|}
\multicolumn{4}{c}{} \\ \hline
 &  $({\rm I\hspace{-.1em}I\hspace{-.1em}I}_{i,j})$ & $({\rm I\hspace{-.1em}V}_k)$ & $({\rm V}_{i,j})$  \\ \hline
 \begin{minipage}{17mm}
 \begin{center}
 \ \\
 The\\
shape\\
of $F_p$\\
$(k=0)$
 \end{center}
 \end{minipage} &
\begin{minipage}{40mm}
   \begin{center}
    \begin{picture}(0,6)
 \qbezier(0,0)(4,0)(4,3)
 \qbezier(0,0)(4,0)(4,-3)
 \qbezier(0,0)(-4,0)(-4,3)
 \qbezier(0,0)(-4,0)(-4,-3)
 \put(-0.5075,-0.65){$\bullet$}
 \put(2.4925,0.25){$\bullet$}
 \put(2.4975,-1.25){$\bullet$}
 \put(-3.5075,0.25){$\bullet$}
 \put(-3.5075,-1.25){$\bullet$}
\put(-0.42,-1.5){$x$}
    \end{picture}
  \end{center}
 \end{minipage}&
\begin{minipage}{40mm}
  \begin{center}
   \begin{picture}(0,6)
 \put(0,-5){\line(0,1){10}}
 \qbezier(-4.25,-2.5)(0,0)(4.25,2.5)
 \qbezier(-4.25,2.5)(0,0)(4.25,-2.5)
 \put(-0.5075,-0.55){$\circ$}
 \put(-0.5075,1.45){$\bullet$}
 \put(1.1975,0.45){$\bullet$}
 \put(1.1975,-1.55){$\bullet$}
 \put(-2.2075,0.45){$\bullet$}
 \put(-2.2075,-1.55){$\bullet$}
 \put(-0.5075,-2.55){$\bullet$}
   \end{picture}
  \end{center}
 \end{minipage}&
 \begin{minipage}{40mm}
   \begin{center}
    \begin{picture}(0,6)
 \put(-5,-2){\line(1,0){10}} 
 \put(0.1,-5){\line(0,1){10}}
 \put(-0.1,-5){\line(0,1){10}}
 \put(-3,1){\line(1,0){6}}
 \put(-3,2){\line(1,0){6}}
 \put(-3,3){\line(1,0){6}}
 \put(-0.5075,-2.55){$\circ$}
 \put(-0.5075,-1.05){$\bullet$}
 \put(-0.5075,-4.05){$\bullet$}
 \put(-2.0075,-2.55){$\bullet$}
 \put(0.9925,-2.55){$\bullet$}
\put(-4,-4){$I$}
\put(-0.4,5.5){$J$}
    \end{picture}
  \end{center}
 \end{minipage}
\\
 & & & \\
 & & & \\ \hline
 $\alpha_0(F_p)$ & $1+k$ $(k=0,1,2,3)$ & $k$ $(k=0,1,2,3)$ & $2+k$ $(k=0,1,2)$  \\ \hline
 $\alpha_1(F_p)$ & $1$ & $3$ & $3$ \\ \hline
 $\varepsilon(F_p)$ & $0$ & $1$ & $1$ \\ \hline
 $\mathrm{Ind}(F_p)$ & $\frac{3}{7}$ & $\frac{16}{7}$ & $\frac{16}{7}$ \\ \hline
 $\sigma(F_p)$ & $-\frac{16}{15}k-\frac{37}{15}$ & $-\frac{16}{15}k-\frac{16}{15}$ & $-\frac{16}{15}k-\frac{16}{5}$ \\ \hline
\end{tabular}
\end{center}
\end{table}

\begin{table}[H]
\begin{center}
\begin{tabular}{|c|c|c|c|}
\multicolumn{4}{c}{} \\ \hline
 & $({\rm V\hspace{-.1em}I}_k)$ & $({\rm V\hspace{-.1em}I\hspace{-.1em}I}_k)$ & $({\rm V\hspace{-.1em}I\hspace{-.1em}I\hspace{-.1em}I})$ \\ \hline
 \begin{minipage}{17mm}
 \begin{center}
 \ \\
 The\\
shape\\
of $F_p$\\
$(k=0)$
 \end{center}
 \end{minipage} &
\begin{minipage}{40mm}
   \begin{center}
    \begin{picture}(0,6)
 \put(-2,-5){\line(0,1){10}}
 \put(-2.2,-5){\line(0,1){10}} 
 \put(-1.8,-5){\line(0,1){10}}  
 \put(-2.5,0.5){\line(2,1){4}}
 \put(-2.5,2){\line(2,1){4}}
 \put(-2.5,3.5){\line(2,1){4}}
 \put(-2.5,0.3){\line(2,1){4}}
 \put(-2.5,1.8){\line(2,1){4}}
 \put(-2.5,3.3){\line(2,1){4}}
 \put(4.5,0.5){\line(-2,1){4}}
 \put(4.5,2){\line(-2,1){4}}
 \put(4.5,3.5){\line(-2,1){4}}
 \put(-2.5075,-3.05){$\circ$}
 \put(-2.5075,-1.55){$\bullet$}
 \put(-2.5075,-4.55){$\bullet$}
    \end{picture}
  \end{center}
 \end{minipage}&
\begin{minipage}{40mm}
  \begin{center}
   \begin{picture}(0,6)
 \put(-5,2.5){\line(1,0){10}}
 \put(-5,-2.5){\line(1,0){10}} 
 \put(0.1,-5){\line(0,1){10}}
 \put(-0.1,-5){\line(0,1){10}}
 \put(-0.5075,1.95){$\circ$}
 \put(-0.5075,-3.05){$\circ$}
 \put(-0.5075,-0.55){$\bullet$}
 \put(1.4975,1.95){$\bullet$}
 \put(-2.5075,1.95){$\bullet$}
 \put(1.4975,-3.05){$\bullet$}
 \put(-2.5075,-3.05){$\bullet$}
   \end{picture}
  \end{center}
 \end{minipage}&
\begin{minipage}{40mm}
  \begin{center}
   \begin{picture}(0,6)
 \qbezier(-3.5,-3.5)(0,0)(3.5,3.5)
 \qbezier(-3.5,3.5)(0,0)(3.5,-3.5)
 \qbezier(-3.5,-3.3)(0,0.2)(3.5,3.7)
 \qbezier(-3.5,3.7)(0,0.2)(3.5,-3.3)
 \qbezier(-3.5,-3.7)(0,-0.2)(3.5,3.3)
 \qbezier(-3.5,3.3)(0,-0.2)(3.5,-3.7)
 \put(-0.5075,-0.55){$\circ$}
 \put(0.8925,0.85){$\circ$}
 \put(-1.9075,0.85){$\circ$}
 \put(0.8925,-1.95){$\bullet$}
 \put(-1.9075,-1.95){$\bullet$}
   \end{picture}
  \end{center}
 \end{minipage}
\\
 & & & \\
 & & & \\ \hline
 $\alpha_0(F_p)$ & $4+k$ $(k=0,1)$ & $1+k$ $(k=0,1,2)$ & $4$ \\ \hline
 $\alpha_1(F_p)$ & $3$ & $4$ & $4$ \\ \hline
 $\varepsilon(F_p)$ & $1$ & $2$ & $3$ \\ \hline
 $\mathrm{Ind}(F_p)$ & $\frac{16}{7}$ & $\frac{26}{7}$ & $\frac{33}{7}$ \\ \hline
 $\sigma(F_p)$ & $-\frac{16}{15}k-\frac{16}{3}$ & $-\frac{16}{15}k-\frac{2}{5}$ & $-\frac{7}{15}$ \\ \hline
\end{tabular}
\end{center}
\end{table}

\noindent
In the above tables, the symbol $\bullet$ stands for a point on a $1$-dimensional fixed component of $\sigma$, while the symbol $\circ$ stands for an isolated fixed point of $\sigma$. 
Note that the fibers given in the tables are the most typical (generic) ones and 
we will obtain the whole list by degenerating them, that is, some of $\bullet$'s can overlap one another to give different topological types of fibers.
A fiber of type (0) is a non-singular curve of genus $4$ in the generic 
case and, it will have a cusp when two $\bullet$'s overlap, etc.
A component with three $\bullet$ or $\circ$ in total is an elliptic curve when the three are distinct, 
or a rational curve with one smooth point ($\bullet$ or $\circ$) and one cusp $\bullet$ when two of $\bullet$ overlap, 
or three smooth rational curves intersecting in one point $\bullet$ when three of $\bullet$ overlap. 
A component with no fixed point is a smooth rational curve. 
The indices $i$, $j$, etc. are determined as follows: 
For $({\rm I}_{i,j,l})$ and $({\rm V}_{i,j})$, the index $i$ (resp. $j$) counts the number of $\bullet$ 
lost by overlapping on the curve $I$ (resp. $J$), $0\leq i,j\leq 2$.
For $({\rm I\hspace{-.1em}I\hspace{-.1em}I}_{i,j})$, the index $i$ counts 
the number of $\bullet$ lost by overlapping with $\bullet$ at $x$ and the index $j$ counts the number of $\bullet$ lost by overlapping 
not with $\bullet$ at $x$, leaving the original $\bullet$ at $x$ untouched. 
We remark that such overlappings at $x$ can occur only on one of two components meeting at $x$. 
Hence we have $(i,j)=(0,0), (0,1), (0,2)$, $(1,0), (1,1)$, $(2,0)$ and $(2,1)$ 
when $({\rm I\hspace{-.1em}I\hspace{-.1em}I}_{i,j})$.
For $(0_{i_1,\ldots,i_m})$, the indices $i_j$ are positive integers with 
$\sum_{j=1}^mi_j=6$ and $i_1\geq i_2\geq \cdots \geq i_m$; when $i_j>1$, 
one can understand that $i_j-1$ $\bullet$'s out of $6-m$ overlap with the $j$-th $\bullet$.
There are $11$ such sequences $\{i_j\}_{j=1}^m$.
The integer $k$ is the total number of $\bullet$ lost by overlapping, and then $k=\sum_{j=1}^{m} (i_j-1)$ for $(0_{i_1,\ldots,i_m})$, 
$k=i+j$ for $({\rm I}_{i,j,l})$, $({\rm I\hspace{-.1em}I\hspace{-.1em}I}_{i,j})$ or $({\rm V}_{i,j})$. 
The index $l$ counts the number of $(-2)$-curves in one chain connecting the curve $I$ with $J$ for $({\rm I}_{i,j,l})$, 
or the number of $2$-fold $(-2)$-curves in one chain intersecting with the curve $K$ for $({\rm I\hspace{-.1em}I}_{k,l})$. 
The number of topologically different fibers of types $(0_{i_1,\ldots,i_m})$, $({\rm I\hspace{-.1em}I\hspace{-.1em}I}_{i,j})$, 
$({\rm I\hspace{-.1em}V}_k)$, $({\rm V}_{i,j})$, $({\rm V\hspace{-.1em}I}_k)$, $({\rm V\hspace{-.1em}I\hspace{-.1em}I}_k)$ 
and $({\rm V\hspace{-.1em}I\hspace{-.1em}I\hspace{-.1em}I})$ is $11$, $7$, $4$, $4$, $2$, $3$ and $1$, respectively. 
The number of topologically different fibers of types $({\rm I}_{i,j,l})$ and $({\rm I\hspace{-.1em}I}_{k,l})$ up to the number $l$ is $6$ and $3$, 
respectively.\\
 \\
\indent In order to classify fibers of primitive cyclic covering fibrations of genus $4$ of type $(4,0,3)$, it is sufficient to classify admissible sequences $\mathcal{D}_1,\mathcal{D}_2,\dots,\mathcal{D}_N$ of abstract singularity diagrams with $n=3$, $t^1=r=6$ and $m_{i,1}^1\le 3+k$ if $\mathcal{D}_1$ is of type $k$ from Lemma~\ref{eltrlem}.
We proceed with the following steps.

\begin{enumerate}

\item[(i)] Classify abstract singularity diagrams of type $k$ for $(3,6)$ with $m_{i,1}\le 3+k$ for $k=0,1$.

\item[(ii)] Classify admissible sequences of abstract singularity diagrams with $n=3$, $t^1=3+k$ and $\mathcal{D}_1$ is of type $k$ for $k=0,1$.

\item[(iii)] Classify admissible sequences of abstract singularity diagrams with $n=3$, $t^1=r=6$ and $m_{i,1}^1\le 3+k$ if $\mathcal{D}_1$ is of type $k$ for $k=0,1$.

\end{enumerate}

\noindent (i) All abstract singularity diagrams of type $0$ for $(3,6)$ with $m_{i,1}\le 3$ are as follows.

\begin{itemize}
\item $c=0$

\begin{table}[H]
\begin{center}
\begin{tabular}{c}

 \begin{minipage}{0.7\hsize}
 \begin{tabular}{c}
 \\
 $(6)$\ \ \ \  $(5,1)$\ \ \ \  $(4,2)$\ \ \ \  $(4,1,1)$\ \  $\cdots$\ \ $(2,1,1,1,1)$\ \ \ \   $(1,1,1,1,1,1)$
 \end{tabular}
 \end{minipage}

\end{tabular}
\end{center}
\end{table}

\item $c=1$

\begin{table}[H]
\begin{center}
\begin{tabular}{c}

 \begin{minipage}{0.2\hsize}
 \begin{tabular}{cc}
  & \\ \cline{1-1}
 \multicolumn{1}{|l|}{3}& $(3)$ \\ \cline{1-1}
 \end{tabular}
 \end{minipage}

\begin{minipage}{0.2\hsize}
 \begin{tabular}{cc}
  & \\ \cline{1-1}
 \multicolumn{1}{|l|}{3}& $(2,1)$ \\ \cline{1-1}
 \end{tabular}
 \end{minipage}

\begin{minipage}{0.2\hsize}
 \begin{tabular}{cc}
  & \\ \cline{1-1}
 \multicolumn{1}{|l|}{3}& $(1,1,1)$ \\ \cline{1-1}
 \end{tabular}
 \end{minipage}

\begin{minipage}{0.1\hsize}
 \begin{tabular}{cc}
  $(1)$ & \\ \cline{1-1}
 \multicolumn{1}{|c|}{3}& $(2)$ \\ \cline{1-1}
 \end{tabular}
 \end{minipage}

\end{tabular}
\end{center}
\end{table}

\begin{table}[H]
\begin{center}
\begin{tabular}{c}

\begin{minipage}{0.2\hsize}
 \begin{tabular}{cc}
 $(1)$ & \\ \cline{1-1}
 \multicolumn{1}{|c|}{3}& $(1,1)$ \\ \cline{1-1}
 \end{tabular}
 \end{minipage}

\begin{minipage}{0.2\hsize}
 \begin{tabular}{cc}
  $(2)$ & \\ \cline{1-1}
 \multicolumn{1}{|c|}{3}& $(1)$ \\ \cline{1-1}
 \end{tabular}
 \end{minipage}

\begin{minipage}{0.1\hsize}
 \begin{tabular}{cc}
  $(3)$& \\ \cline{1-1}
 \multicolumn{1}{|c|}{3}& \\ \cline{1-1}
 \end{tabular}
 \end{minipage}

\end{tabular}
\end{center}
\end{table}

\item $c=2$

\begin{table}[H]
\begin{center}
\begin{tabular}{c}

 \begin{minipage}{0.2\hsize}
 \begin{tabular}{cc}
  & \\ \cline{1-2}
 \multicolumn{1}{|l|}{3}& \multicolumn{1}{|l|}{3} \\ \cline{1-2}
 \end{tabular}
 \end{minipage}

\begin{minipage}{0.2\hsize}
 \begin{tabular}{c}
  \\ \cline{1-1}
 \multicolumn{1}{|l|}{3} \\ \cline{1-1}
 \multicolumn{1}{|l|}{3} \\ \cline{1-1}
 \end{tabular}
 \end{minipage}

\end{tabular}
\end{center}
\end{table}

\end{itemize}

All abstract singularity diagrams of type $1$ for $(3,6)$ with $m_{i,1}\le 4$ are as follows.

\begin{table}[H]
\begin{center}
\begin{tabular}{c}

 \begin{minipage}{0.2\hsize}
 \begin{tabular}{ccc}
  & & \\ \cline{1-3}
 \multicolumn{1}{|l|}{3}& \multicolumn{1}{|l|}{3} & \multicolumn{1}{|l|}{3}\\ \cline{1-3}
 \end{tabular}
 \end{minipage}

 \begin{minipage}{0.2\hsize}
 \begin{tabular}{cc}
  & \\ \cline{1-1}
 \multicolumn{1}{|l|}{3}& \\ \cline{1-2}
 \multicolumn{1}{|l|}{3}& \multicolumn{1}{|l|}{3} \\ \cline{1-2}
 \end{tabular}
 \end{minipage}

 \begin{minipage}{0.2\hsize}
 \begin{tabular}{cc}
  & \\ \cline{1-1}
 \multicolumn{1}{|l|}{3}& \\ \cline{1-2}
 \multicolumn{1}{|l|}{4}& \multicolumn{1}{|l|}{3} \\ \cline{1-2}
 \end{tabular}
 \end{minipage}

 \begin{minipage}{0.2\hsize}
 \begin{tabular}{c}
   \\ \cline{1-1}
 \multicolumn{1}{|l|}{3} \\ \cline{1-1}
 \multicolumn{1}{|l|}{3} \\ \cline{1-1}
 \multicolumn{1}{|l|}{3} \\ \cline{1-1}
 \end{tabular}
 \end{minipage}

 \begin{minipage}{0.05\hsize}
 \begin{tabular}{c}
   \\ \cline{1-1}
 \multicolumn{1}{|l|}{3} \\ \cline{1-1}
 \multicolumn{1}{|l|}{4} \\ \cline{1-1}
 \multicolumn{1}{|l|}{4} \\ \cline{1-1}
 \end{tabular}
 \end{minipage}
\end{tabular}
\end{center}
\end{table}

\noindent (ii) All admissible sequences of abstract singularity diagrams with $n=3$, $t^1=3$ and $\mathcal{D}_1$ being of type $0$ are as follows.

\begin{table}[H]
\begin{center}
\begin{tabular}{c}

 \begin{minipage}{0.15\hsize}
 \begin{tabular}{c}
   \\ \cline{1-1}
 \multicolumn{1}{|c|}{$(x_1,3)$}\\ \cline{1-1}
 \multicolumn{1}{c}{\lower1ex\hbox{$\mathcal{D}^0$}}
 \end{tabular}
 \end{minipage}

\begin{minipage}{0.15\hsize}
 \begin{tabular}{c}
   \\ \cline{1-1}
 \multicolumn{1}{|c|}{$(x_2,3)$}\\ \cline{1-1}
 \multicolumn{1}{c}{\lower1ex\hbox{$\mathcal{D}_{x_1}^0$}}
 \end{tabular}
 \end{minipage}

\begin{minipage}{0.1\hsize}
 \begin{tabular}{c}
  $\cdots$
 \end{tabular}
 \end{minipage}

\begin{minipage}{0.15\hsize}
 \begin{tabular}{c}
   \\ \cline{1-1}
 \multicolumn{1}{|c|}{$(x_k,3)$}\\ \cline{1-1}
 \multicolumn{1}{c}{\lower1ex\hbox{$\mathcal{D}_{x_{k-1}}^0$}}
 \end{tabular}
 \end{minipage}

 \begin{minipage}{0.25\hsize}
 \begin{tabular}{ccc}
  & & \\
 $(1,1,1)$ or &$(2,1)$ or &$(3)$\\
 \multicolumn{3}{c}{\lower1ex\hbox{$\mathcal{D}_{x_k}^0$}}
 \end{tabular}
 \end{minipage}

\end{tabular}
\end{center}
\end{table}

\noindent
Here and in the sequel, $\mathcal{D}_*^k$ means the diagram $\mathcal{D}_*$ is of type $k$.

All admissible sequences of abstract singularity diagrams with $n=3$, $t^1=4$ and $\mathcal{D}_1$ being of type $1$ are the following $3$ classes.

\begin{table}[H]
\begin{center}
\begin{tabular}{c}

 \begin{minipage}{0.25\hsize}
 \begin{tabular}{cc}
   & \\ \cline{1-2}
 \multicolumn{1}{|c|}{$(x,3)$} &  \multicolumn{1}{|c|}{$(y,3)$} \\ \cline{1-2}
 \multicolumn{2}{c}{\lower1ex\hbox{$\mathcal{D}^1$}}
 \end{tabular}
 \end{minipage}

\begin{minipage}{0.25\hsize}
 \begin{tabular}{cc}
  & \\
 $(1,1,1)$ or &$(2,1)$\\
 \multicolumn{2}{c}{\lower1ex\hbox{$\mathcal{D}_x^0$}}
 \end{tabular}
 \end{minipage}

 \begin{minipage}{0.25\hsize}
 \begin{tabular}{cc}
  & \\
 $(1,1,1)$ or &$(2,1)$ \\
 \multicolumn{2}{c}{\lower1ex\hbox{$\mathcal{D}_y^0$}}
 \end{tabular}
 \end{minipage}

\end{tabular}
\end{center}
\end{table}

\begin{table}[H]
\begin{center}
\begin{tabular}{c}

 \begin{minipage}{0.2\hsize}
 \begin{tabular}{c}
  \\ \cline{1-1}
 \multicolumn{1}{|c|}{$(y,3)$} \\ \cline{1-1}
 \multicolumn{1}{|c|}{$(x,3)$} \\ \cline{1-1}
 \multicolumn{1}{c}{\lower1ex\hbox{$\mathcal{D}^1$}}
 \end{tabular}
 \end{minipage}

 \begin{minipage}{0.2\hsize}
 \begin{tabular}{c}
  \\ \cline{1-1}
 \multicolumn{1}{|c|}{$(y,3)$} \\ \cline{1-1}
 \multicolumn{1}{c}{\lower1ex\hbox{$\mathcal{D}_x^0$}}
 \end{tabular}
 \end{minipage}

 \begin{minipage}{0.2\hsize}
 \begin{tabular}{cc}
  & \\
 $(1,1,1)$ or &$(2,1)$ \\
 \multicolumn{2}{c}{\lower1ex\hbox{$\mathcal{D}_y^0$}}
 \end{tabular}
 \end{minipage}

\end{tabular}
\end{center}
\end{table}

\begin{table}[H]
\begin{center}
\begin{tabular}{c}

 \begin{minipage}{0.2\hsize}
 \begin{tabular}{c}
  \\ \cline{1-1}
 \multicolumn{1}{|c|}{$(y,3)$} \\ \cline{1-1}
 \multicolumn{1}{|c|}{$(x,4)$} \\ \cline{1-1}
 \multicolumn{1}{c}{\lower1ex\hbox{$\mathcal{D}^1$}}
 \end{tabular}
 \end{minipage}

 \begin{minipage}{0.25\hsize}
 \begin{tabular}{cc}
   & \\ \cline{1-2}
 \multicolumn{1}{|c|}{$(y,3)$} &  \multicolumn{1}{|c|}{$(z,3)$} \\ \cline{1-2}
 \multicolumn{2}{c}{\lower1ex\hbox{$\mathcal{D}_x^1$}}
 \end{tabular}
 \end{minipage}

 \begin{minipage}{0.15\hsize}
 \begin{tabular}{c}
  \\
 $(1,1,1)$ \\
 \multicolumn{1}{c}{\lower1ex\hbox{$\mathcal{D}_y^0$}}
 \end{tabular}
 \end{minipage}

\begin{minipage}{0.2\hsize}
 \begin{tabular}{cc}
  & \\
 $(1,1,1)$ or &$(2,1)$ \\
 \multicolumn{2}{c}{\lower1ex\hbox{$\mathcal{D}_z^0$}}
 \end{tabular}
 \end{minipage}

\end{tabular}
\end{center}
\end{table}

\noindent (iii) In order to classify admissible sequences of abstract singularity diagrams with $n=3$, $t^1=r=6$ and $m_{i,1}^1\le 3+k$ if $\mathcal{D}_1$ is of type $k$, we may consider admissible sequences to be continued from each column of diagrams classified in (i).
Using the classification in (ii), we can classify them as follows.

\begin{enumerate}

\item 
\begin{table}[H]
\begin{center}
\begin{tabular}{c}

 \begin{minipage}{0.15\hsize}
 \begin{tabular}{c}
   \\ \cline{1-1}
 \multicolumn{1}{|c|}{$(x_1,3)$}\\ \cline{1-1}
 \multicolumn{1}{c}{\lower1ex\hbox{$\mathcal{D}^0$}}
 \end{tabular}
 \end{minipage}

\begin{minipage}{0.12\hsize}
 \begin{tabular}{c}
   \\ \cline{1-1}
 \multicolumn{1}{|c|}{$(x_2,3)$}\\ \cline{1-1}
 \multicolumn{1}{c}{\lower1ex\hbox{$\mathcal{D}_{x_1}^0$}}
 \end{tabular}
 \end{minipage}

\begin{minipage}{0.08\hsize}
 \begin{tabular}{c}
  $\cdots$
 \end{tabular}
 \end{minipage}

\begin{minipage}{0.15\hsize}
 \begin{tabular}{c}
   \\ \cline{1-1}
 \multicolumn{1}{|c|}{$(x_k,3)$}\\ \cline{1-1}
 \multicolumn{1}{c}{\lower1ex\hbox{$\mathcal{D}_{x_{k-1}}^0$}}
 \end{tabular}
 \end{minipage}

 \begin{minipage}{0.25\hsize}
 \begin{tabular}{ccc}
  & & \\
 $(1,1,1)$ or &$(2,1)$ or &$(3)$\\
 \multicolumn{3}{c}{\lower1ex\hbox{$\mathcal{D}_{x_k}^0$}}
 \end{tabular}
 \end{minipage}

\end{tabular}
\end{center}
\end{table}

\setlength\unitlength{0.28cm}
\begin{figure}[H]
\begin{center}
\begin{tabular}{c}

 \begin{minipage}{0.6\hsize}
 \begin{center}
\begin{picture}(13,5)
 \put(-7.5,-1){$x_1$}
 \multiput(-13,0)(0.4,0){25}{\line(1,0){0.2}}
 \qbezier(-8,0)(-8,3)(-5,3)
 \qbezier(-8,0)(-8,3)(-11,3)
 \qbezier(-8,0)(-8,2.5)(-4,2.5)
 \qbezier(-8,0)(-8,-3)(-5,-3)
 \qbezier(-8,0)(-8,-3)(-11,-3)
 \qbezier(-8,0)(-8,-2.5)(-4,-2.5)
 \multiput(9,-4)(0.4,0){10}{\line(1,0){0.2}}
 \multiput(10,-5)(0,0.4){10}{\line(0,1){0.2}}
 \multiput(7,-2)(0.4,0){10}{\line(1,0){0.2}}
 \multiput(8,-3)(0,0.4){10}{\line(0,1){0.2}}
 \multiput(4,1)(0,0.4){10}{\line(0,1){0.2}}
 \put(3,2){\line(1,0){2}}
 \put(3,3){\line(1,0){2}}
 \put(3,4){\line(1,0){2}}
 \put(6,1){$\cdots$}
 \put(-0.5,1){$k$}
 \put(2,0){\vector(-1,0){1.5}}
 \multiput(-0.5,0)(0.4,0){3}{\line(1,0){0.2}}
 \put(-0.5,0){\vector(-1,0){1.5}}
 \end{picture}
 \end{center}
 \end{minipage}

\begin{minipage}{0.2\hsize}
 \begin{center}
 \begin{picture}(13,5)
 \put(-7.5,-1){$x_1$}
 \multiput(-13,0)(0.4,0){25}{\line(1,0){0.2}}
 \qbezier(-8,0)(-8,3)(-5,3)
 \qbezier(-8,0)(-8,3)(-11,3)
 \qbezier(-8,0)(-8,2.5)(-4,2.5)
 \qbezier(-8,0)(-8,-2.5)(-4,-2.5)
 \multiput(9,-4)(0.4,0){10}{\line(1,0){0.2}}
 \multiput(10,-5)(0,0.4){10}{\line(0,1){0.2}}
 \multiput(7,-2)(0.4,0){10}{\line(1,0){0.2}}
 \multiput(8,-3)(0,0.4){10}{\line(0,1){0.2}}
 \multiput(4,1)(0,0.4){10}{\line(0,1){0.2}}
 \qbezier(4,2.5)(4,3.5)(5.5,3.5)
 \qbezier(4,2.5)(4,1.5)(5.5,1.5)
 \put(3,4){\line(1,0){2}}
 \put(6,1){$\cdots$}
 \put(-0.5,1){$k$}
 \put(2,0){\vector(-1,0){1.5}}
 \multiput(-0.5,0)(0.4,0){3}{\line(1,0){0.2}}
 \put(-0.5,0){\vector(-1,0){1.5}}
 \end{picture}
 \end{center}
 \end{minipage}

\end{tabular}
\end{center}
\end{figure}


\setlength\unitlength{0.28cm}
\begin{figure}[H]
\begin{center}
\begin{tabular}{c}

 \begin{minipage}{0\hsize}
 \begin{center}
 \begin{picture}(13,5)
 \put(-7.5,-1){$x_1$}
 \multiput(-13,0)(0.4,0){25}{\line(1,0){0.2}}
 \qbezier(-8,0)(-8,3)(-5.5,3)
 \qbezier(-8,0)(-8,3)(-10.5,3)
 \multiput(9,-4)(0.4,0){10}{\line(1,0){0.2}}
 \multiput(10,-5)(0,0.4){10}{\line(0,1){0.2}}
 \multiput(7,-2)(0.4,0){10}{\line(1,0){0.2}}
 \multiput(8,-3)(0,0.4){10}{\line(0,1){0.2}}
 \multiput(4,1)(0,0.4){10}{\line(0,1){0.2}}
 \qbezier(4,3)(4,5)(5.5,5)
 \qbezier(4,3)(4,1)(2.5,1)
 \put(6,1){$\cdots$}
 \put(-0.5,1){$k$}
 \put(2,0){\vector(-1,0){1.5}}
 \multiput(-0.5,0)(0.4,0){3}{\line(1,0){0.2}}
 \put(-0.5,0){\vector(-1,0){1.5}}
 \end{picture}
 \end{center}
 \end{minipage}

\end{tabular}
\end{center}
\end{figure}

\bigskip

\item 
\begin{table}[H]
\begin{center}
\begin{tabular}{c}

 \begin{minipage}{0.2\hsize}
 \begin{tabular}{c}
 $(1^*)$  \\ \cline{1-1}
 \multicolumn{1}{|c|}{$(x,3)$}\\ \cline{1-1}
 \multicolumn{1}{c}{\lower1ex\hbox{$\mathcal{D}^0$}}
 \end{tabular}
 \end{minipage}

 \begin{minipage}{0.45\hsize}
 \begin{tabular}{cccc}
  & & \\
 $(1^*,1,1)$ or &$(2^*,1)$ or &$(2,1^*)$ or &$(3^*)$\\
 \multicolumn{4}{c}{\lower1ex\hbox{$\mathcal{D}_x^0$}}
 \end{tabular}
 \end{minipage}

\end{tabular}
\end{center}
\end{table}

\setlength\unitlength{0.28cm}
\begin{figure}[H]
\begin{center}
\begin{tabular}{c}

 \begin{minipage}{0.7\hsize}
 \begin{center}
 \begin{picture}(13,5)
 \put(-8.5,-1.5){$x$}
 \multiput(-13,0)(0.4,0){25}{\line(1,0){0.2}}
 \qbezier(-11,-3)(-8,0)(-5,3)
 \qbezier(-11,3)(-8,0)(-5,-3)
 \qbezier(-8,0)(-5,0)(-5,2)
 \qbezier(-8,0)(-11,0)(-11,2)
 \multiput(3,0)(0.4,0){25}{\line(1,0){0.2}}
 \qbezier(6,4)(8,4)(10,4)
 \qbezier(6,2)(8,2)(10,2)
 \qbezier(5,-1)(8,0)(11,1)
 \put(2,0){\vector(-1,0){4}}
 \multiput(8,-5)(0,0.4){25}{\line(0,1){0.2}}
 \end{picture}
 \end{center}
 \end{minipage}

 \begin{minipage}{0.7\hsize}
 \begin{picture}(13,5)
 \put(-7.5,-1.3){$x$}
 \multiput(-13,0)(0.4,0){25}{\line(1,0){0.2}}
 \qbezier(-8,0)(-5,0)(-5,3)
 \qbezier(-8,0)(-5,0)(-5,-3)
 \put(-8,-4){\line(0,1){8}}
 \multiput(3,0)(0.4,0){25}{\line(1,0){0.2}}
 \qbezier(8,0)(8,2.5)(10.5,2.5)
 \qbezier(8,0)(8,-2.5)(10.5,-2.5)
 \put(6,4){\line(1,0){4}}
 \put(2,0){\vector(-1,0){4}}
 \multiput(8,-5)(0,0.4){25}{\line(0,1){0.2}}
 \end{picture}
 \end{minipage}

\end{tabular}
\end{center}
\end{figure}

\ \\

\setlength\unitlength{0.28cm}
\begin{figure}[H]
\begin{center}
\begin{tabular}{c}

 \begin{minipage}{0.7\hsize}
 \begin{center}
 \begin{picture}(13,5)
 \put(-8.5,-1.3){$x$}
 \multiput(-13,0)(0.4,0){25}{\line(1,0){0.2}}
 \qbezier(-8,0)(-8,3)(-5,3)
 \qbezier(-8,0)(-8,3)(-11,3)
 \qbezier(-8,0)(-5,0)(-5,2)
 \qbezier(-8,0)(-11,0)(-11,2)
 \multiput(3,0)(0.4,0){25}{\line(1,0){0.2}}
 \qbezier(8,3)(8,4.5)(9.5,4.5)
 \qbezier(8,3)(8,1.5)(9.5,1.5)
 \qbezier(5,-1)(8,0)(11,1)
 \put(2,0){\vector(-1,0){4}}
 \multiput(8,-5)(0,0.4){25}{\line(0,1){0.2}}
 \end{picture}
 \end{center}
 \end{minipage}

 \begin{minipage}{0.7\hsize}
 \begin{picture}(13,5)
 \put(-8.5,-1.3){$x$}
 \multiput(-13,0)(0.4,0){25}{\line(1,0){0.2}}
 \qbezier(-8,0)(-5,0)(-5,2.5)
 \qbezier(-8,0)(-5,0)(-5,-2.5)
 \multiput(3,0)(0.4,0){25}{\line(1,0){0.2}}
 \qbezier(8,0)(8,2)(10,2)
 \qbezier(8,0)(8,-2)(6,-2)
 \put(2,0){\vector(-1,0){4}}
 \multiput(8,-5)(0,0.4){25}{\line(0,1){0.2}}
 \end{picture}
 \end{minipage}

\end{tabular}
\end{center}
\end{figure}

\bigskip

\item 
\begin{table}[H]
\begin{center}
\begin{tabular}{c}

 \begin{minipage}{0.2\hsize}
 \begin{tabular}{c}
 $(2)$  \\ \cline{1-1}
 \multicolumn{1}{|c|}{$(x,3)$}\\ \cline{1-1}
 \multicolumn{1}{c}{\lower1ex\hbox{$\mathcal{D}^0$}}
 \end{tabular}
 \end{minipage}

 \begin{minipage}{0.2\hsize}
 \begin{tabular}{cc}
  & \\
 $(1,1,1)$ or &$(2,1)$ \\
 \multicolumn{2}{c}{\lower1ex\hbox{$\mathcal{D}_x^0$}}
 \end{tabular}
 \end{minipage}

\end{tabular}
\end{center}
\end{table}

\setlength\unitlength{0.28cm}
\begin{figure}[H]
\begin{center}
\begin{tabular}{c}

 \begin{minipage}{0.55\hsize}
 \begin{center}
 \begin{picture}(13,5)
 \put(-8.5,-1.3){$x$}
 \multiput(-13,0)(0.4,0){25}{\line(1,0){0.2}}
 \qbezier(-11,-3)(-8,0)(-5,3)
 \qbezier(-11,3)(-8,0)(-5,-3)
 \qbezier(-8,0)(-5,0)(-5,2)
 \qbezier(-8,0)(-11,0)(-11,-2)
 \multiput(3,0)(0.4,0){25}{\line(1,0){0.2}}
 \qbezier(6,4)(8,4)(10,4)
 \qbezier(6,2)(8,2)(10,2)
 \qbezier(8,0)(10.5,0)(10.5,1.5)
 \qbezier(8,0)(5.5,0)(5.5,1.5)
 \put(2,0){\vector(-1,0){4}}
 \multiput(8,-5)(0,0.4){25}{\line(0,1){0.2}}
 \end{picture}
 \end{center}
 \end{minipage}

 \begin{minipage}{0.25\hsize}
 \begin{center}
 \begin{picture}(9,5)
 \put(-8.5,-1.3){$x$}
 \multiput(-13,0)(0.4,0){25}{\line(1,0){0.2}}
 \qbezier(-8,0)(-8,3)(-5,3)
 \qbezier(-8,0)(-8,3)(-11,3)
 \qbezier(-8,0)(-5,0)(-5,2)
 \qbezier(-8,0)(-11,0)(-11,-2)
 \multiput(3,0)(0.4,0){25}{\line(1,0){0.2}}
 \qbezier(8,3)(8,4.5)(9.5,4.5)
 \qbezier(8,3)(8,1.5)(9.5,1.5)
 \qbezier(8,0)(10.5,0)(10.5,1.5)
 \qbezier(8,0)(5.5,0)(5.5,1.5)
 \put(2,0){\vector(-1,0){4}}
 \multiput(8,-5)(0,0.4){25}{\line(0,1){0.2}}
 \end{picture}
 \end{center}
 \end{minipage}

\end{tabular}
\end{center}
\end{figure}

\bigskip

\item 
\begin{table}[H]
\begin{center}
\begin{tabular}{c}

 \begin{minipage}{0.2\hsize}
 \begin{tabular}{c}
 $(3)$  \\ \cline{1-1}
 \multicolumn{1}{|c|}{$(x,3)$}\\ \cline{1-1}
 \multicolumn{1}{c}{\lower1ex\hbox{$\mathcal{D}^0$}}
 \end{tabular}
 \end{minipage}

 \begin{minipage}{0.2\hsize}
 \begin{tabular}{cc}
  & \\
 $(1,1,1)$ or &$(2,1)$ \\
 \multicolumn{2}{c}{\lower1ex\hbox{$\mathcal{D}_x^0$}}
 \end{tabular}
 \end{minipage}

\end{tabular}
\end{center}
\end{table}

\setlength\unitlength{0.28cm}
\begin{figure}[H]
\begin{center}
\begin{tabular}{c}

 \begin{minipage}{0.55\hsize}
 \begin{center}
 \begin{picture}(13,5)
 \put(-8.5,-1.3){$x$}
 \multiput(-13,0)(0.4,0){25}{\line(1,0){0.2}}
 \qbezier(-11,-3)(-8,0)(-5,3)
 \qbezier(-11,3)(-8,0)(-5,-3)
 \qbezier(-8,0)(-5,0)(-5,2)
 \qbezier(-8,0)(-11,0)(-11,2)
 \multiput(3,0)(0.4,0){25}{\line(1,0){0.2}}
 \qbezier(6,4)(8,4)(10,4)
 \qbezier(6,2)(8,2)(10,2)
 \qbezier(8,0)(10.5,0)(10.5,1.5)
 \qbezier(8,0)(5.5,0)(5.5,-1.5)
 \put(2,0){\vector(-1,0){4}}
 \multiput(8,-5)(0,0.4){25}{\line(0,1){0.2}}
 \end{picture}
 \end{center}
 \end{minipage}

 \begin{minipage}{0.25\hsize}
 \begin{center}
 \begin{picture}(9,5)
 \put(-8.5,-1.3){$x$}
 \multiput(-13,0)(0.4,0){25}{\line(1,0){0.2}}
 \qbezier(-8,0)(-8,3)(-5,3)
 \qbezier(-8,0)(-8,3)(-11,3)
 \qbezier(-8,0)(-5,0)(-5,2)
 \qbezier(-8,0)(-11,0)(-11,2)
 \multiput(3,0)(0.4,0){25}{\line(1,0){0.2}}
 \qbezier(8,3)(8,4.5)(9.5,4.5)
 \qbezier(8,3)(8,1.5)(9.5,1.5)
 \qbezier(8,0)(10.5,0)(10.5,1.5)
 \qbezier(8,0)(5.5,0)(5.5,-1.5)
 \put(2,0){\vector(-1,0){4}}
 \multiput(8,-5)(0,0.4){25}{\line(0,1){0.2}}
 \end{picture}
 \end{center}
 \end{minipage}

\end{tabular}
\end{center}
\end{figure}

\bigskip

\item 
\begin{table}[H]
\begin{center}
\begin{tabular}{c}

 \begin{minipage}{0.15\hsize}
 \begin{tabular}{c}
   \\ \cline{1-1}
 \multicolumn{1}{|c|}{$(y_1,3)$}\\ \cline{1-1}
 \multicolumn{1}{|c|}{$(x,3)$}\\ \cline{1-1}
 \multicolumn{1}{c}{\lower1ex\hbox{$\mathcal{D}^0$}}
 \end{tabular}
 \end{minipage}

 \begin{minipage}{0.12\hsize}
 \begin{tabular}{c}
  \\ \cline{1-1}
 \multicolumn{1}{|c|}{$(y_1,3)$} \\ \cline{1-1}
 \multicolumn{1}{c}{\lower1ex\hbox{$\mathcal{D}_x^0$}}
 \end{tabular}
 \end{minipage}

 \begin{minipage}{0.08\hsize}
 \begin{tabular}{c}
  \\ 
 $\cdots$ \\
 \end{tabular}
 \end{minipage}

\begin{minipage}{0.15\hsize}
 \begin{tabular}{c}
  \\ \cline{1-1}
 \multicolumn{1}{|c|}{$(y_k,3)$} \\ \cline{1-1}
 \multicolumn{1}{c}{\lower1ex\hbox{$\mathcal{D}_{y_{k-1}}^0$}}
 \end{tabular}
 \end{minipage}

 \begin{minipage}{0.3\hsize}
 \begin{tabular}{ccc}
 & & \\
 $(1,1,1)$ or &$(2,1)$ or &$(3)$\\
 \multicolumn{3}{c}{\lower1ex\hbox{$\mathcal{D}_{y_k}^0$}}
 \end{tabular}
 \end{minipage}

\end{tabular}
\end{center}
\end{table}

\setlength\unitlength{0.28cm}
\begin{figure}[H]
\begin{center}
\begin{tabular}{c}

 \begin{minipage}{0\hsize}
 \begin{center}
 \begin{picture}(21,5)
 \put(-16.5,-1.3){$x$}
 \multiput(-21,0)(0.4,0){25}{\line(1,0){0.2}}
 \qbezier(-16,0)(-19,0)(-19,3)
 \qbezier(-16,0)(-19,0)(-19,-3)
 \qbezier(-16,0)(-13,0)(-13,3)
 \qbezier(-16,0)(-13,0)(-13,-3)
 \qbezier(-16,0)(-19,0)(-20,1.5)
 \qbezier(-16,0)(-13,0)(-12,1.5)
 \put(0.8,-1.3){$y_1$}
 \multiput(-5,0)(0.4,0){25}{\line(1,0){0.2}}
 \qbezier(0,0)(0,3)(3,3)
 \qbezier(0,0)(0,3)(-3,3)
 \qbezier(0,0)(0,2.5)(4,2.5)
 \qbezier(0,0)(0,-3)(3,-3)
 \qbezier(0,0)(0,-3)(-3,-3)
 \qbezier(0,0)(0,-2.5)(4,-2.5)
 \multiput(17,-4)(0.4,0){10}{\line(1,0){0.2}}
 \multiput(17,-2)(0.4,0){10}{\line(1,0){0.2}}
 \multiput(18,-5)(0,0.4){10}{\line(0,1){0.2}}
 \multiput(15,-3)(0.4,0){10}{\line(1,0){0.2}}
 \multiput(16,-4)(0,0.4){10}{\line(0,1){0.2}}
 \multiput(12,1)(0,0.4){10}{\line(0,1){0.2}}
 \put(11,2){\line(1,0){2}}
 \put(11,3){\line(1,0){2}}
 \put(11,4){\line(1,0){2}}
 \put(14,1){$\cdots$}
 \put(7.5,1){$k$}
\put(-6,0){\vector(-1,0){4}}
 \put(10,0){\vector(-1,0){1.5}}
 \multiput(7.5,0)(0.4,0){3}{\line(1,0){0.2}}
 \put(7.5,0){\vector(-1,0){1.5}}
 \end{picture}
 \end{center}
 \end{minipage}

\end{tabular}
\end{center}
\end{figure}

\ \\

\setlength\unitlength{0.28cm}
\begin{figure}[H]
\begin{center}
\begin{tabular}{c}

 \begin{minipage}{0\hsize}
 \begin{center}
 \begin{picture}(21,5)
 \put(-16.5,-1.3){$x$}
 \multiput(-21,0)(0.4,0){25}{\line(1,0){0.2}}
 \qbezier(-16,0)(-13,0)(-13,3)
 \qbezier(-16,0)(-13,0)(-13,-3)
 \qbezier(-16,0)(-19,0)(-20,1.5)
 \qbezier(-16,0)(-13,0)(-12,1.5)
 \put(-0.7,-1.3){$y_1$}
 \multiput(-5,0)(0.4,0){25}{\line(1,0){0.2}}
 \qbezier(0,0)(3,3)(4,1.5)
 \qbezier(0,0)(3,3)(1.5,4)
 \qbezier(2,-2)(0,0)(-2,2)
 \multiput(17,-4)(0.4,0){10}{\line(1,0){0.2}}
 \multiput(17,-2)(0.4,0){10}{\line(1,0){0.2}}
 \multiput(18,-5)(0,0.4){10}{\line(0,1){0.2}}
 \multiput(15,-3)(0.4,0){10}{\line(1,0){0.2}}
 \multiput(16,-4)(0,0.4){10}{\line(0,1){0.2}}
 \multiput(12,1)(0,0.4){10}{\line(0,1){0.2}}
 \qbezier(12,3.5)(12,4.5)(13,4.5)
 \qbezier(12,3.5)(12,2.5)(13,2.5)
 \put(11,2){\line(1,0){2}}
 \put(14,1){$\cdots$}
 \put(7.5,1){$k$}
\put(-6,0){\vector(-1,0){4}}
 \put(10,0){\vector(-1,0){1.5}}
 \multiput(7.5,0)(0.4,0){3}{\line(1,0){0.2}}
 \put(7.5,0){\vector(-1,0){1.5}}
 \end{picture}
 \end{center}
 \end{minipage}

\end{tabular}
\end{center}
\end{figure}

\ \\

\setlength\unitlength{0.28cm}
\begin{figure}[H]
\begin{center}
\begin{tabular}{c}

 \begin{minipage}{0\hsize}
 \begin{center}
 \begin{picture}(21,5)
 \put(-16.5,-1.3){$x$}
 \multiput(-21,0)(0.4,0){25}{\line(1,0){0.2}}
 \qbezier(-16,0)(-13,0)(-13,2)
 \qbezier(-16,0)(-13,0)(-13,-2)
 \put(0.8,-1.3){$y_1$}
 \multiput(-5,0)(0.4,0){25}{\line(1,0){0.2}}
 \qbezier(0,0)(3,3)(4,2)
 \qbezier(0,0)(3,3)(2,4)
 \multiput(17,-4)(0.4,0){10}{\line(1,0){0.2}}
 \multiput(17,-2)(0.4,0){10}{\line(1,0){0.2}}
 \multiput(18,-5)(0,0.4){10}{\line(0,1){0.2}}
 \multiput(15,-3)(0.4,0){10}{\line(1,0){0.2}}
 \multiput(16,-4)(0,0.4){10}{\line(0,1){0.2}}
 \multiput(12,1)(0,0.4){10}{\line(0,1){0.2}}
 \qbezier(12,3)(12,4)(13,4)
 \qbezier(12,3)(12,2)(11,2)
 \put(14,1){$\cdots$}
 \put(7.5,1){$k$}
\put(-6,0){\vector(-1,0){4}}
 \put(10,0){\vector(-1,0){1.5}}
 \multiput(7.5,0)(0.4,0){3}{\line(1,0){0.2}}
 \put(7.5,0){\vector(-1,0){1.5}}
 \end{picture}
 \end{center}
 \end{minipage}

\end{tabular}
\end{center}
\end{figure}

\bigskip

\item 
\begin{table}[H]
\begin{center}
\begin{tabular}{c}

 \begin{minipage}{0.2\hsize}
 \begin{tabular}{c}
   \\ \cline{1-1}
 \multicolumn{1}{|c|}{$(x,3)$}\\ \cline{1-1}
 \multicolumn{1}{c}{\lower1ex\hbox{$\mathcal{D}^1$}}
 \end{tabular}
 \end{minipage}

 \begin{minipage}{0.2\hsize}
 \begin{tabular}{cc}
  & \\
 $(1,1,1)$ or &$(2,1)$ \\
 \multicolumn{2}{c}{\lower1ex\hbox{$\mathcal{D}_x^0$}}
 \end{tabular}
 \end{minipage}

\end{tabular}
\end{center}
\end{table}

\setlength\unitlength{0.28cm}
\begin{figure}[H]
\begin{center}
\begin{tabular}{c}

 \begin{minipage}{0.7\hsize}
 \begin{center}
 \begin{picture}(13,5)
 \put(-8.5,-1.3){$x$}
 \put(-13,0){\line(1,0){10}}
 \qbezier(-11,-3)(-8,0)(-5,3)
 \qbezier(-11,3)(-8,0)(-5,-3)
 \put(3,0){\line(1,0){10}}
 \qbezier(6,4)(8,4)(10,4)
 \qbezier(6,2)(8,2)(10,2)
 \put(2,0){\vector(-1,0){4}}
 \multiput(8,-5)(0,0.4){25}{\line(0,1){0.2}}
 \end{picture}
 \end{center}
 \end{minipage}

 \begin{minipage}{0.7\hsize}
 \begin{picture}(13,5)
 \put(-8.5,-1.3){$x$}
 \put(-13,0){\line(1,0){10}}
 \qbezier(-8,0)(-8,3)(-5,3)
 \qbezier(-8,0)(-8,3)(-11,3)
 \put(3,0){\line(1,0){10}}
 \qbezier(8,3)(8,1)(11,1)
 \qbezier(8,3)(8,5)(11,5)
 \put(2,0){\vector(-1,0){4}}
 \multiput(8,-5)(0,0.4){25}{\line(0,1){0.2}}
 \end{picture}
 \end{minipage}

\end{tabular}
\end{center}
\end{figure}

\bigskip

\item 
\begin{table}[H]
\begin{center}
\begin{tabular}{c}

 \begin{minipage}{0.2\hsize}
 \begin{tabular}{c}
  \\ \cline{1-1}
 \multicolumn{1}{|c|}{$(y,3)$} \\ \cline{1-1}
 \multicolumn{1}{|c|}{$(x,3)$} \\ \cline{1-1}
 \multicolumn{1}{c}{\lower1ex\hbox{$\mathcal{D}^1$}}
 \end{tabular}
 \end{minipage}

 \begin{minipage}{0.15\hsize}
 \begin{tabular}{c}
   \\ \cline{1-1}
 \multicolumn{1}{|c|}{$(y,3)$}\\ \cline{1-1}
 \multicolumn{1}{c}{\lower1ex\hbox{$\mathcal{D}_x^0$}}
 \end{tabular}
 \end{minipage}

 \begin{minipage}{0.2\hsize}
 \begin{tabular}{cc}
  & \\
 $(1,1,1)$ or &$(2,1)$ \\
 \multicolumn{2}{c}{\lower1ex\hbox{$\mathcal{D}_y^0$}}
 \end{tabular}
 \end{minipage}

\end{tabular}
\end{center}
\end{table}\hfill

\setlength\unitlength{0.28cm}
\begin{figure}[H]
\begin{center}
\begin{tabular}{c}

 \begin{minipage}{0\hsize}
 \begin{center}
 \begin{picture}(21,5)
 \put(-16.5,-1.3){$x$}
 \put(-21,0){\line(1,0){10}}
 \qbezier(-16,0)(-13,0)(-13,3)
 \qbezier(-16,0)(-19,0)(-19,-3)
 \qbezier(-16,0)(-19,0)(-19,3)
 \qbezier(-16,0)(-13,0)(-13,-3)
 \put(-0.5,-1.3){$y$}
 \put(-5,0){\line(1,0){10}}
 \qbezier(-3,-3)(0,0)(3,3)
 \qbezier(3,-3)(0,0)(-3,3)
 \multiput(0,-5)(0,0.4){25}{\line(0,1){0.2}}
 \put(11,0){\line(1,0){10}}
 \qbezier(14,1.3)(16,1.3)(18,1.3)
 \qbezier(14,2.6)(16,2.6)(18,2.6)
 \multiput(11,4)(0.4,0){25}{\line(1,0){0.2}}
 \multiput(16,-5)(0,0.4){25}{\line(0,1){0.2}}
 \put(-6,0){\vector(-1,0){4}}
 \put(10,0){\vector(-1,0){4}}
 \end{picture}
 \end{center}
 \end{minipage}

\end{tabular}
\end{center}
\end{figure}

\ \\

\setlength\unitlength{0.28cm}
\begin{figure}[H]
\begin{center}
\begin{tabular}{c}

 \begin{minipage}{0\hsize}
 \begin{picture}(21,5)
 \put(-16.5,-1.3){$x$}
 \put(-21,0){\line(1,0){10}}
 \qbezier(-16,0)(-13,0)(-13,3)
 \qbezier(-16,0)(-13,0)(-13,-3)
 \put(-0.5,-1.3){$y$}
 \put(-5,0){\line(1,0){10}}
 \qbezier(0,0)(3,3)(3,2)
 \qbezier(0,0)(3,3)(1,4)
 \multiput(0,-5)(0,0.4){25}{\line(0,1){0.2}}
 \put(11,0){\line(1,0){10}}
 \qbezier(16,2)(16,3.5)(18,3.5)
 \qbezier(16,2)(16,0.5)(18,0.5)
 \multiput(11,4)(0.4,0){25}{\line(1,0){0.2}}
 \multiput(16,-5)(0,0.4){25}{\line(0,1){0.2}}
 \put(-6,0){\vector(-1,0){4}}
 \put(10,0){\vector(-1,0){4}} 
 \end{picture}
 \end{minipage}

\end{tabular}
\end{center}
\end{figure}

\bigskip

\item
\begin{table}[H]
\begin{center}
\begin{tabular}{c}

 \begin{minipage}{0.18\hsize}
 \begin{tabular}{c}
  \\ \cline{1-1}
 \multicolumn{1}{|c|}{$(y,3)$} \\ \cline{1-1}
 \multicolumn{1}{|c|}{$(x,4)$} \\ \cline{1-1}
 \multicolumn{1}{c}{\lower1ex\hbox{$\mathcal{D}^1$}}
 \end{tabular}
 \end{minipage}

 \begin{minipage}{0.25\hsize}
 \begin{tabular}{cc}
   & \\ \cline{1-2}
 \multicolumn{1}{|c|}{$(y,3)$} &  \multicolumn{1}{|c|}{$(z,3)$} \\ \cline{1-2}
 \multicolumn{2}{c}{\lower1ex\hbox{$\mathcal{D}_x^1$}}
 \end{tabular}
 \end{minipage}

\begin{minipage}{0.15\hsize}
 \begin{tabular}{c}
  \\
 $(1,1,1)$ \\
 \multicolumn{1}{c}{\lower1ex\hbox{$\mathcal{D}_y^0$}}
 \end{tabular}
 \end{minipage}

 \begin{minipage}{0.25\hsize}
 \begin{tabular}{cc}
  & \\
 $(1,1,1)$ or &$(2,1)$ \\
 \multicolumn{2}{c}{\lower1ex\hbox{$\mathcal{D}_z^0$}}
 \end{tabular}
 \end{minipage}

\end{tabular}
\end{center}
\end{table}

\setlength\unitlength{0.28cm}
\begin{figure}[H]
\begin{center}
\begin{tabular}{c}

 \begin{minipage}{0\hsize}
 \begin{center}
 \begin{picture}(29,6)
 \put(-25.5,-1.3){$x$}
 \put(-29,0){\line(1,0){10}}
 \qbezier(-24,0)(-21,0)(-21,3)
 \qbezier(-24,0)(-27,0)(-27,3)
 \qbezier(-24,0)(-24,3)(-22,4)
 \qbezier(-24,0)(-24,3)(-26,4)
 \qbezier(-24,0)(-24,-3)(-22,-4)
 \qbezier(-24,0)(-24,-3)(-26,-4)
 \put(-7.5,-1.3){$y$}
 \put(-7.5,3.5){$z$}
 \put(-13,0){\line(1,0){10}}
 \put(-8,-5){\line(0,1){10}}
 \qbezier(-9.5,5.5)(-8,4)(-6.5,2.5)
 \qbezier(-9.5,2.5)(-8,4)(-6.5,5.5)
 \qbezier(-10,-1.75)(-8,0)(-6,1.75)
 \put(10.5,4.5){$z$}
 \put(3,0){\line(1,0){10}}
 \put(3,4){\line(1,0){10}}
 \qbezier(6,2)(8,2)(10,2)
 \qbezier(10,3)(11,4)(12,5)
 \qbezier(10,5)(11,4)(12,3)
 \multiput(8,-5)(0,0.4){25}{\line(0,1){0.2}} 
 \put(19,0){\line(1,0){10}}
 \put(19,3){\line(1,0){10}}
 \qbezier(22,1.5)(24,1.5)(26,1.5)
 \qbezier(26,4)(27,4)(28,4)
 \qbezier(26,5)(27,5)(28,5)
 \multiput(27,1)(0,0.4){15}{\line(0,1){0.2}}
 \multiput(24,-5)(0,0.4){25}{\line(0,1){0.2}}
 \put(-14,0){\vector(-1,0){4}}
 \put(2,0){\vector(-1,0){4}}
 \put(18,0){\vector(-1,0){4}}
 \end{picture}
 \end{center}
 \end{minipage}

\end{tabular}
\end{center}
\end{figure}

\ \\

\setlength\unitlength{0.28cm}
\begin{figure}[H]
\begin{center}
\begin{tabular}{c}

 \begin{minipage}{0\hsize}
 \begin{center}
 \begin{picture}(29,6)
 \put(-24.5,-1.3){$x$}
 \put(-29,0){\line(1,0){10}}
 \qbezier(-24,0)(-21,0)(-21,3)
 \qbezier(-24,0)(-27,0)(-27,3)
 \qbezier(-24,0)(-24,3)(-22,4)
 \qbezier(-24,0)(-24,3)(-26,4)
 \put(-7.5,-1.3){$y$}
 \put(-9,3.5){$z$}
 \put(-13,0){\line(1,0){10}}
 \put(-8,-5){\line(0,1){10}}
 \qbezier(-8,4)(-6,4)(-6,2.5)
 \qbezier(-8,4)(-6,4)(-6,5.5)
 \qbezier(-10,-1.75)(-8,0)(-6,1.75)
 \put(10.5,3){$z$}
 \put(3,0){\line(1,0){10}}
 \put(3,4){\line(1,0){10}}
 \qbezier(6,2)(8,2)(10,2)
 \qbezier(11,4)(11,5)(12,5)
 \qbezier(11,4)(11,5)(10,5)
 \multiput(8,-5)(0,0.4){25}{\line(0,1){0.2}} 
 \put(19,0){\line(1,0){10}}
 \put(19,3){\line(1,0){10}}
 \qbezier(22,1.5)(24,1.5)(26,1.5)
 \qbezier(27,4.5)(27,5.5)(28.5,5.5)
 \qbezier(27,4.5)(27,3.5)(28.5,3.5)
 \multiput(27,1)(0,0.4){15}{\line(0,1){0.2}}
 \multiput(24,-5)(0,0.4){25}{\line(0,1){0.2}}
 \put(-14,0){\vector(-1,0){4}}
 \put(2,0){\vector(-1,0){4}}
 \put(18,0){\vector(-1,0){4}}
 \end{picture}
 \end{center}
 \end{minipage}

\end{tabular}
\end{center}
\end{figure}

\bigskip

\item 
\begin{table}[H]
\begin{center}
\begin{tabular}{c}

 \begin{minipage}{0.2\hsize}
 \begin{tabular}{c}
  \\ \cline{1-1}
 \multicolumn{1}{|c|}{$(z,3)$} \\ \cline{1-1}
 \multicolumn{1}{|c|}{$(y,3)$} \\ \cline{1-1}
 \multicolumn{1}{|c|}{$(x,3)$} \\ \cline{1-1}
 \multicolumn{1}{c}{\lower1ex\hbox{$\mathcal{D}^1$}}
 \end{tabular}
 \end{minipage}

 \begin{minipage}{0.2\hsize}
 \begin{tabular}{c}
   \\
   \\ \cline{1-1}
 \multicolumn{1}{|c|}{$(y,3)$} \\ \cline{1-1}
 \multicolumn{1}{c}{\lower1ex\hbox{$\mathcal{D}_x^0$}}
 \end{tabular}
 \end{minipage}

\begin{minipage}{0.15\hsize}
 \begin{tabular}{c}
   \\
   \\ \cline{1-1}
 \multicolumn{1}{|c|}{$(z,3)$} \\ \cline{1-1}
 \multicolumn{1}{c}{\lower1ex\hbox{$\mathcal{D}_y^0$}}
 \end{tabular}
 \end{minipage}

 \begin{minipage}{0.2\hsize}
 \begin{tabular}{cc}
  & \\
  & \\
 $(1,1,1)$ or &$(2,1)$ \\
 \multicolumn{2}{c}{\lower1ex\hbox{$\mathcal{D}_z^0$}}
 \end{tabular}
 \end{minipage}

\end{tabular}
\end{center}
\end{table}

\setlength\unitlength{0.28cm}
\begin{figure}[H]
\begin{center}
\begin{tabular}{c}

 \begin{minipage}{0\hsize}
 \begin{center}
 \begin{picture}(29,6)
 \put(-24.5,-1.3){$x$}
 \put(-29,0){\line(1,0){10}}
 \qbezier(-24,0)(-21,0)(-21,2)
 \qbezier(-24,0)(-27,0)(-27,2)
 \qbezier(-24,0)(-21,0)(-21,-2)
 \qbezier(-24,0)(-27,0)(-27,-2)
 \put(-7.8,-1.3){$y$}
 \put(-13,0){\line(1,0){10}}
\multiput(-8,-5)(0,0.4){25}{\line(0,1){0.2}} 
 \qbezier(-8,0)(-5,0)(-5,3)
 \qbezier(-8,0)(-11,0)(-11,3)
 \qbezier(-8,0)(-5,0)(-5,-3)
 \qbezier(-8,0)(-11,0)(-11,-3)
 \put(8,-1.5){$z$}
\put(3,0){\line(1,0){10}}
\multiput(8,-5)(0,0.4){25}{\line(0,1){0.2}}
\multiput(3,4)(0.4,0){25}{\line(1,0){0.2}} 
 \qbezier(5,-3)(8,0)(11,3)
 \qbezier(5,3)(8,0)(11,-3)
 \put(19,0){\line(1,0){10}}
 \qbezier(22,1.3)(24,1.3)(26,1.3)
 \qbezier(22,2.6)(24,2.6)(26,2.6)
 \multiput(19,4)(0.4,0){25}{\line(1,0){0.2}}
 \multiput(24,-5)(0,0.4){25}{\line(0,1){0.2}}
  \multiput(27,1)(0,0.4){15}{\line(0,1){0.2}}
 \put(-14,0){\vector(-1,0){4}}
 \put(2,0){\vector(-1,0){4}}
 \put(18,0){\vector(-1,0){4}}
 \end{picture}
 \end{center}
 \end{minipage}

\end{tabular}
\end{center}
\end{figure}

\ \\

\setlength\unitlength{0.28cm}
\begin{figure}[H]
\begin{center}
\begin{tabular}{c}

 \begin{minipage}{0\hsize}
 \begin{center}
 \begin{picture}(29,6)
 \put(-24.5,-1.3){$x$}
 \put(-29,0){\line(1,0){10}}
 \qbezier(-24,0)(-21,0)(-21,3)
 \qbezier(-24,0)(-21,0)(-21,-3)
 \put(-7.8,-1.3){$y$}
 \put(-13,0){\line(1,0){10}}
\multiput(-8,-5)(0,0.4){25}{\line(0,1){0.2}} 
 \qbezier(-8,0)(-5,0)(-5,3)
 \qbezier(-8,0)(-5,0)(-5,-3)
 \put(8,-1.5){$z$}
\put(3,0){\line(1,0){10}}
\multiput(8,-5)(0,0.4){25}{\line(0,1){0.2}}
\multiput(3,4)(0.4,0){25}{\line(1,0){0.2}}
 \qbezier(8,0)(11,3)(11,2)
 \qbezier(8,0)(11,3)(9,3.5)
 \put(19,0){\line(1,0){10}}
 \qbezier(24,2)(24,3.5)(26,3.5)
 \qbezier(24,2)(24,0.5)(26,0.5)
 \multiput(19,4)(0.4,0){25}{\line(1,0){0.2}}
 \multiput(24,-5)(0,0.4){25}{\line(0,1){0.2}}
  \multiput(27,1)(0,0.4){15}{\line(0,1){0.2}}
 \put(-14,0){\vector(-1,0){4}}
 \put(2,0){\vector(-1,0){4}}
 \put(18,0){\vector(-1,0){4}}
 \end{picture}
 \end{center}
 \end{minipage}

\end{tabular}
\end{center}
\end{figure}

\bigskip

\item
\begin{table}[H]
\begin{center}
\begin{tabular}{c}

 \begin{minipage}{0.16\hsize}
 \begin{tabular}{c}
  \\ \cline{1-1}
 \multicolumn{1}{|c|}{$(z,3)$} \\ \cline{1-1}
 \multicolumn{1}{|c|}{$(y,4)$} \\ \cline{1-1}
 \multicolumn{1}{|c|}{$(x,4)$} \\ \cline{1-1}
 \multicolumn{1}{c}{\lower1ex\hbox{$\mathcal{D}^1$}}
 \end{tabular}
 \end{minipage}

 \begin{minipage}{0.16\hsize}
 \begin{tabular}{c}
   \\ \cline{1-1}
 \multicolumn{1}{|c|}{$(w,3)$} \\ \cline{1-1}
 \multicolumn{1}{|c|}{$(y,4)$} \\ \cline{1-1}
 \multicolumn{1}{c}{\lower1ex\hbox{$\mathcal{D}_x^1$}}
 \end{tabular}
 \end{minipage}

\begin{minipage}{0.25\hsize}
 \begin{tabular}{cc}
   & \\
   & \\ \cline{1-2}
 \multicolumn{1}{|c|}{$(z,3)$} & \multicolumn{1}{|c|}{$(w,3)$}\\ \cline{1-2}
 \multicolumn{2}{c}{\lower1ex\hbox{$\mathcal{D}_y^1$}}
 \end{tabular}
 \end{minipage}

 \begin{minipage}{0.15\hsize}
 \begin{tabular}{c}
  \\
  \\
 $(1,1,1)$ \\
 \multicolumn{1}{c}{\lower1ex\hbox{$\mathcal{D}_z^0$}}
 \end{tabular}
 \end{minipage}

 \begin{minipage}{0.15\hsize}
 \begin{tabular}{c}
  \\
  \\
 $(1,1,1)$ \\
 \multicolumn{1}{c}{\lower1ex\hbox{$\mathcal{D}_w^0$}}
 \end{tabular}
 \end{minipage}

\end{tabular}
\end{center}
\end{table}

\setlength\unitlength{0.21cm}
\begin{figure}[H]
\begin{center}
\begin{tabular}{c}

 \begin{minipage}{0\hsize}
 \begin{center}
 \begin{picture}(37,6)
 \put(-32.5,-1.3){$x$}
 \put(-37,0){\line(1,0){10}}
 \qbezier(-32,0)(-29,0)(-29,3)
 \qbezier(-32,0)(-29,0)(-29,-3)
 \qbezier(-32,0)(-29,0)(-29,2)
 \qbezier(-32,0)(-35,0)(-35,-2)
 \put(-17,-1.3){$y$}
 \put(-21,0){\line(1,0){10}}
 \put(-16,-5){\line(0,1){10}}
 \qbezier(-16,0)(-14,0)(-14,2)
 \qbezier(-16,0)(-18,0)(-18,2)
 \qbezier(-16,0)(-16,3)(-13,3)
 \qbezier(-16,0)(-16,-3)(-13,-3)
 \put(0.5,-1.5){$z$}
 \put(0.5,2.5){$w$}
 \put(-5,0){\line(1,0){10}}
 \put(0,-5){\line(0,1){10}} 
 \put(-5,4){\line(1,0){10}}
 \qbezier(-2,-2)(0,0)(2,2)
 \qbezier(-2,2)(0,4)(2,6)
 \put(20,3){$w$}
 \put(11,0){\line(1,0){10}}
 \multiput(16,-5)(0,0.4){25}{\line(0,1){0.2}}
 \put(11,4){\line(1,0){10}}
 \put(19.5,2){\line(0,1){4}}
 \put(14,2){\line(1,0){4}}
 \qbezier(18,2.5)(19.5,4)(21,5.5)
 \put(27,0){\line(1,0){10}}
 \multiput(32,-5)(0,0.4){25}{\line(0,1){0.2}}
 \put(27,3){\line(1,0){10}}
 \put(30,1.5){\line(1,0){4}}
 \put(33,5){\line(1,0){4}}
 \put(33.5,4){\line(1,0){3}}
 \multiput(35,0.5)(0,0.4){13}{\line(0,1){0.2}}
 \put(-22,0){\vector(-1,0){4}}
 \put(-6,0){\vector(-1,0){4}}
 \put(10,0){\vector(-1,0){4}}
 \put(26,0){\vector(-1,0){4}}
 \end{picture}
 \end{center}
 \end{minipage}

\end{tabular}
\end{center}
\end{figure}

\end{enumerate}

\bigskip

\ \\

\noindent For example, we consider the following sequence of singularity diagrams associated with $\Gamma_p$

\begin{table}[H]
\begin{center}
\begin{tabular}{c}

 \begin{minipage}{0.32\hsize}
 \begin{tabular}{ccc}
   & & \\ \cline{1-3}
 \multicolumn{1}{|c|}{$(x,3)$} &  \multicolumn{1}{|c|}{$(y,3)$} & \multicolumn{1}{|c|}{$(z,3)$}\\ \cline{1-3}
 \multicolumn{3}{c}{\lower1ex\hbox{$\Gamma_p^1$}}
 \end{tabular}
 \end{minipage}

 \begin{minipage}{0.13\hsize}
 \begin{tabular}{c}
  \\
 $(1,1,1)$ \\
 \multicolumn{1}{c}{\lower1ex\hbox{$E_x^0$}}
 \end{tabular}
 \end{minipage}

 \begin{minipage}{0.13\hsize}
 \begin{tabular}{c}
  \\
 $(1,1,1)$ \\
 \multicolumn{1}{c}{\lower1ex\hbox{$E_y^0$}}
 \end{tabular}
 \end{minipage}

 \begin{minipage}{0.15\hsize}
 \begin{tabular}{c}
  \\
 $(1,1,1)$ \\
 \multicolumn{1}{c}{\lower1ex\hbox{$E_z^0$}}
 \end{tabular}
 \end{minipage}

\end{tabular}
\end{center}
\end{table}

After blowing-ups at $x$, $y$, $z$, the branch locus $\widetilde{R}$ near the fiber $\widetilde{\Gamma}_p$ of $\widetilde{\varphi}$ over $p$ 
is as follows.

\setlength\unitlength{0.25cm}
\begin{figure}[H]
\begin{center}
\begin{tabular}{c}

 \begin{minipage}{0\hsize}
 \begin{center}
 \begin{picture}(7.5,5)
 \put(-7.5,0){\line(1,0){15}}
 \put(8,-1){$\widehat{\Gamma}_p$}
 \put(-7,-4){$\widehat{E}_1$}
 \put(-2,-4){$\widehat{E}_2$}
 \put(3,-4){$\widehat{E}_3$}
 \multiput(0,-5)(0,0.4){25}{\line(0,1){0.2}}
 \multiput(5,-5)(0,0.4){25}{\line(0,1){0.2}}
 \multiput(-5,-5)(0,0.4){25}{\line(0,1){0.2}}
 \put(-1.5,1.5){\line(1,0){3}}
 \put(-6.5,1.5){\line(1,0){3}}
 \put(3.5,1.5){\line(1,0){3}}
 \put(-1.5,3){\line(1,0){3}}
 \put(-6.5,3){\line(1,0){3}}
 \put(3.5,3){\line(1,0){3}}
 \end{picture}
 \end{center}
 \end{minipage}

\end{tabular}
\end{center}
\end{figure}

\ \\
\ \\

\noindent where $\widehat{E}_k$ denotes the proper transform of $E_k$ and $\widetilde{\Gamma}_p=\widehat{\Gamma}_p+\widehat{E}_1
+\widehat{E}_2+\widehat{E}_3$. Taking $3$-cyclic covering and contracting $(-1)$-curve in a fiber of $\widetilde{f}$, the fibers $\widetilde{F}_p$, $F_p$ of $\widetilde{f}$, $f$ over $p$ are as follows.

\setlength\unitlength{0.25cm}
\begin{figure}[H]
\begin{center}
\begin{tabular}{c}

 \begin{minipage}{0\hsize}
 \begin{center}
 \begin{picture}(15.5,5)
 \put(-20,-1){$E$}
 \put(-17,-4){$\widehat{A}_1$}
 \put(-12,-4){$\widehat{A}_2$}
 \put(-7,-4){$\widehat{A}_3$}
 \put(-10,-5){\line(0,1){10}}
 \put(-5,-5){\line(0,1){10}}
 \put(-15,-5){\line(0,1){10}}
 \put(-10.5075,1.45){$\bullet$}
 \put(-10.5075,-2.55){$\bullet$}
 \put(-15.5075,1.45){$\bullet$}
 \put(-15.5075,-2.55){$\bullet$}
 \put(-5.5075,1.45){$\bullet$}
 \put(-5.5075,-2.55){$\bullet$}
 \put(-17.5,0){\line(1,0){15}}
 
\put(-1,0){\vector(1,0){4}}

 \put(5.8,2.3){$A_1$}
 \put(10.5,4.5){$A_2$}
 \put(13.5,0.8){$A_3$}
 \put(10,-5){\line(0,1){10}}
 \qbezier(5.75,-2.5)(10,0)(14.25,2.5)
 \qbezier(5.75,2.5)(10,0)(14.25,-2.5)
 \put(9.4925,-0.55){$\circ$}
 \put(9.4925,1.45){$\bullet$}
 \put(11.1925,0.45){$\bullet$}
 \put(11.1925,-1.55){$\bullet$}
 \put(7.7925,0.45){$\bullet$}
 \put(7.7925,-1.55){$\bullet$}
 \put(9.4925,-2.55){$\bullet$}
   \end{picture}
  \end{center}
 \end{minipage}

\end{tabular}
\end{center}
\end{figure}

\ \\
\ \\

\noindent
where $\widehat{A}_k=\widetilde{\theta}^*\widehat{E}_k$, $3E=\widetilde{\theta}^*\widehat{\Gamma}_p$, $A_k=\rho(\widehat{A}_k)$,
 $\widetilde{F}_p=3E+\widehat{A}_1+\widehat{A}_2+\widehat{A}_3$, $F_p=A_1+A_2+A_3$, the symbol $\bullet$ and $\circ$ respectively denote a point on a $1$-dimensional fixed component of the automorphisms $\sigma$, $\widetilde{\sigma}$ of $f$, $\widetilde{f}$ and an isolated fixed point of $\sigma$.
Similarly, we can determine the shape of $F_p$ from another admissible sequence of abstract singularity diagrams. Thus, we can classify all fibers of primitive cyclic covering fibrations of type $(4,0,3)$ as follows:

\begin{table}[H]
\begin{center}
\begin{tabular}{|c|c|c|c|}
\multicolumn{4}{c}{} \\ \hline
 & \multicolumn{3}{|c|}{$\Gamma_p\subset R$} \\ \hline
 \begin{minipage}{17mm}
 \begin{center}
 \ \\
 The\\
diagram\\
of $\Gamma_p$
 \end{center}
 \end{minipage}
 & \begin{minipage}{35mm}
 \begin{center}
 \begin{tabular}{ccc}
  & & \\ \cline{1-3}
 \multicolumn{1}{|l|}{3}& \multicolumn{1}{|l|}{3} & \multicolumn{1}{|l|}{3}\\ \cline{1-3}
 \end{tabular}
 \end{center}
 \end{minipage}
 & \begin{minipage}{35mm}
 \begin{center}
 \begin{tabular}{cc}
  & \\ \cline{1-1}
 \multicolumn{1}{|l|}{3}& \\ \cline{1-2}
 \multicolumn{1}{|l|}{3}& \multicolumn{1}{|l|}{3} \\ \cline{1-2}
 \end{tabular}
 \end{center}
 \end{minipage}
 &  \begin{minipage}{35mm}
 \begin{center}
 \begin{tabular}{cc}
  & \\ \cline{1-1}
 \multicolumn{1}{|l|}{3}& \\ \cline{1-2}
 \multicolumn{1}{|l|}{4}& \multicolumn{1}{|l|}{3} \\ \cline{1-2}
 \end{tabular}
 \end{center}
 \end{minipage}
\\
 & & & \\ \hline
 \begin{minipage}{17mm}
 \begin{center}
 \ \\
The type\\
of $F_p$
 \end{center}
 \end{minipage}
  & $({\rm I\hspace{-.1em}V}_k)$
  & $({\rm V}_{i,j})$
  & $({\rm V\hspace{-.1em}I\hspace{-.1em}I}_k)$ \\ \hline
\end{tabular}
\end{center}
\end{table}

\begin{table}[H]
\begin{center}
\begin{tabular}{|c|c|c|c|}
\multicolumn{4}{c}{} \\ \hline
 & \multicolumn{2}{|c|}{$\Gamma_p\subset R$} & \multicolumn{1}{|c|}{$\Gamma_p\not\subset R$ \& $c=0$} \\ \hline
  \begin{minipage}{14mm}
 \begin{center}
 \ \\
 The\\
diagram\\
of $\Gamma_p$
 \end{center}
 \end{minipage}
 & \begin{minipage}{35mm}
 \begin{center}
 \begin{tabular}{c}
  \\ \cline{1-1}
 \multicolumn{1}{|l|}{3} \\ \cline{1-1}
 \multicolumn{1}{|l|}{3} \\ \cline{1-1}
 \multicolumn{1}{|l|}{3} \\ \cline{1-1}
 \end{tabular}
 \end{center}
 \end{minipage}
 & \begin{minipage}{35mm}
 \begin{center}
 \begin{tabular}{c}
  \\ \cline{1-1}
 \multicolumn{1}{|l|}{3} \\ \cline{1-1}
 \multicolumn{1}{|l|}{4} \\ \cline{1-1}
 \multicolumn{1}{|l|}{4} \\ \cline{1-1}
 \end{tabular}
 \end{center}
 \end{minipage}
 & \begin{minipage}{35mm}
 \begin{center}
 \begin{tabular}{c}
 $(i_1,\ldots,i_m)$ etc.
 \end{tabular}
 \end{center}
 \end{minipage}
\\
 & & & \\ \hline
 \begin{minipage}{17mm}
 \begin{center}
 \ \\
The type\\
of $F_p$
 \end{center}
 \end{minipage}
  & $({\rm V\hspace{-.1em}I}_k)$
  & $({\rm V\hspace{-.1em}I\hspace{-.1em}I\hspace{-.1em}I})$
  & $(0_{i_1,\ldots,i_m})$ \\ \hline
\end{tabular}
\end{center}
\end{table}

\begin{table}[H]
\begin{center}
\begin{tabular}{|c|c|c|c|}
\multicolumn{4}{c}{} \\ \hline
 & \multicolumn{3}{|c|}{$\Gamma_p\not\subset R$ \& $c=1$} \\ \hline
  \begin{minipage}{17mm}
 \begin{center}
 \ \\
 The\\
diagram\\
of $\Gamma_p$
 \end{center}
 \end{minipage}
 & \begin{minipage}{35mm}
 \begin{center}
 \begin{tabular}{cc}
  & \\ \cline{1-1}
 \multicolumn{1}{|c|}{3}& $(1,1,1)$ etc.\\ \cline{1-1}
 \end{tabular}
 \end{center}
 \end{minipage}
 &  \begin{minipage}{35mm}
 \begin{center}
 \begin{tabular}{cc}
 $(1)$  & \\ \cline{1-1}
 \multicolumn{1}{|c|}{3}& $(1,1)$ etc.\\ \cline{1-1}
 \end{tabular}
 \end{center}
 \end{minipage}
 & \begin{minipage}{35mm}
 \begin{center}
 \begin{tabular}{cc}
 $(2)$  & \\ \cline{1-1}
 \multicolumn{1}{|c|}{3}& $(1)$ \\ \cline{1-1}
 \end{tabular}
 \end{center}
 \end{minipage}
\\
 & & & \\ \hline
 \begin{minipage}{17mm}
 \begin{center}
 \ \\
The type\\
of $F_p$
 \end{center}
 \end{minipage}
  & $({\rm I}_{i,j,l})$
  & $({\rm I\hspace{-.1em}I\hspace{-.1em}I}_{i,j})$
  & $({\rm I\hspace{-.1em}I\hspace{-.1em}I}_{1,j})$ \\ \hline
\end{tabular}
\end{center}
\end{table}

\begin{table}[H]
\begin{center}
\begin{tabular}{|c|c|c|c|}
\multicolumn{4}{c}{} \\ \hline
 & \multicolumn{1}{|c|}{$\Gamma_p\not\subset R$ \& $c=1$}
 & \multicolumn{2}{|c|}{$\Gamma_p\not\subset R$ \& $c=2$} \\ \hline
  \begin{minipage}{17mm}
 \begin{center}
 \ \\
 The\\
diagram\\
of $\Gamma_p$
 \end{center}
 \end{minipage}
 & \begin{minipage}{35mm}
 \begin{center}
 \begin{tabular}{c}
 $(3)$  \\ \cline{1-1}
 \multicolumn{1}{|c|}{3} \\ \cline{1-1}
 \end{tabular}
 \end{center}
 \end{minipage}
& \begin{minipage}{35mm}
 \begin{center}
 \begin{tabular}{cc}
  & \\ \cline{1-2}
 \multicolumn{1}{|l|}{3}& \multicolumn{1}{|l|}{3} \\ \cline{1-2}
 \end{tabular}
 \end{center}
 \end{minipage}
 & \begin{minipage}{35mm}
 \begin{center}
 \begin{tabular}{c}
  \\ \cline{1-1}
 \multicolumn{1}{|l|}{3} \\ \cline{1-1}
 \multicolumn{1}{|l|}{3} \\ \cline{1-1}
 \end{tabular}
 \end{center}
 \end{minipage}
\\
 & & & \\ \hline
 \begin{minipage}{17mm}
 \begin{center}
 \ \\
The type\\
of $F_p$
 \end{center}
 \end{minipage}
  & $({\rm I\hspace{-.1em}I\hspace{-.1em}I}_{2,j})$ 
  & $({\rm I}_{i,j,l})$
  & $({\rm I\hspace{-.1em}I}_{k,l})$
\\ \hline
\end{tabular}
\end{center}
\end{table}

By computing $\alpha_k(F_p)$, $\varepsilon(F_p)$, $\mathrm{Ind}(F_p)$ and $\sigma(F_p)$, we get the list in Theorem~\ref{classthm}.

\begin{cor} \label{403Horcor}
Let $f\colon S\to B$ be a primitive cyclic covering fibration of type $(4,0,3)$.
Then we have
$$
K_f^2=\frac{24}{7}\chi_f+\mathrm{Ind}
$$
and $\mathrm{Ind}$ is given by
\begin{align*}
{\rm Ind}=&\sum_{l\ge 0} \frac{3}{7}(l+1)\nu({\rm I}_{*,*,l})+\sum_{l\ge 0} \frac{3}{7}(l+2)\nu({\rm I\hspace{-.1em}I}_{*,l})+\frac{3}{7}\nu({\rm I\hspace{-.1em}I\hspace{-.1em}I}_{*,*})+\frac{16}{7}\nu({\rm I\hspace{-.1em}V}_{*}) \\
&+\frac{16}{7}\nu({\rm V}_{*,*})+\frac{16}{7}\nu({\rm V\hspace{-.1em}I}_{*,*})+\frac{26}{7}\nu({\rm V\hspace{-.1em}I\hspace{-.1em}I}_{*})+\frac{33}{7}\nu({\rm V\hspace{-.1em}I\hspace{-.1em}I\hspace{-.1em}I}) \\
\end{align*}
where $\nu(*)$ denotes the number of fibers of type $(*)$ and $\nu({\rm I_{*,*,l}}):=\sum_{i,j}\nu({\rm I}_{i,j,l})$, etc.
\end{cor}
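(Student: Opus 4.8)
The plan is to obtain the corollary as a direct specialization of the slope equality of Theorem~\ref{slopeeq} to $(g,n)=(4,3)$, fed by the per-fiber invariants assembled in the classification Theorem~\ref{classthm}. First I would substitute $g=4$, $n=3$ into the slope coefficient $\frac{24(g-1)(n-1)}{2(2n-1)(g-1)+n(n+1)}$. Since $g-1=3$, $n-1=2$, $2n-1=5$ and $n(n+1)=12$, this becomes $\frac{24\cdot 3\cdot 2}{2\cdot 5\cdot 3+12}=\frac{144}{42}=\frac{24}{7}$, which already produces the leading term of the asserted equality.

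Next I would simplify the local term $\mathrm{Ind}(F_p)$. The key observation, recorded just after Lemma~\ref{mijlem}, is that $\alpha_k(F_p)=0$ for all $k>[r/2n]$; with $r=6$ and $n=3$ we have $[r/2n]=[6/6]=1$, so only $\alpha_1$ survives. Evaluating the coefficient of $\alpha_1$ appearing in Theorem~\ref{slopeeq}, namely $n\bigl(\frac{(n+1)(n-1)(r-n)}{(2n-1)r-3n}-1\bigr)$, at these values gives $3\bigl(\frac{8\cdot 3}{21}-1\bigr)=3\cdot\frac{3}{21}=\frac{3}{7}$. Hence for every fiber germ one has $\mathrm{Ind}(F_p)=\frac{3}{7}\alpha_1(F_p)+\varepsilon(F_p)$.

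It then remains to insert the values of $\alpha_1(F_p)$ and $\varepsilon(F_p)$ read off from the tables of Theorem~\ref{classthm} for each of the nine classes. For instance, type $(\mathrm{I}_{i,j,l})$ has $(\alpha_1,\varepsilon)=(l+1,0)$, giving $\frac{3}{7}(l+1)$; type $(\mathrm{VII}_k)$ has $(\alpha_1,\varepsilon)=(4,2)$, giving $\frac{12}{7}+2=\frac{26}{7}$; type $(\mathrm{VIII})$ has $(4,3)$, giving $\frac{12}{7}+3=\frac{33}{7}$; and the remaining types $(0_{\ast})$, $(\mathrm{II}_{k,l})$, $(\mathrm{III}_{i,j})$, $(\mathrm{IV}_k)$, $(\mathrm{V}_{i,j})$, $(\mathrm{VI}_k)$ are handled identically, reproducing the $\mathrm{Ind}(F_p)$ row of each table. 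Finally, summing $\sum_{p\in B}\mathrm{Ind}(F_p)$ and collecting the fibers by type (with the convention $\nu(\mathrm{I}_{\ast,\ast,l})=\sum_{i,j}\nu(\mathrm{I}_{i,j,l})$, and so on) yields precisely the displayed expression for $\mathrm{Ind}$.

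The genuine mathematical content lies entirely in the classification Theorem~\ref{classthm} and the computation of the singularity indices $\alpha_1$ and $\varepsilon$ for each admissible sequence of singularity diagrams; once that is granted, the corollary is a purely arithmetic bookkeeping step with no further obstacle. The only point requiring care is confirming that no $\alpha_k$ with $k\ge 2$ can contribute, which is exactly the bound $k\le[r/2n]=1$ guaranteed via Lemma~\ref{eltrlem}.
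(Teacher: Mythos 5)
Your proposal is correct and follows exactly the route the paper takes: specialize Theorem~\ref{slopeeq} to $(g,n)=(4,3)$ (with $r=6$, so that $\frac{24(g-1)(n-1)}{2(2n-1)(g-1)+n(n+1)}=\frac{24}{7}$ and only $\alpha_0$, $\alpha_1$ are nonzero, giving $\mathrm{Ind}(F_p)=\frac{3}{7}\alpha_1(F_p)+\varepsilon(F_p)$), then read off $\alpha_1(F_p)$ and $\varepsilon(F_p)$ from the tables of Theorem~\ref{classthm} and sum over fiber types. The arithmetic checks out in every case, so nothing further is needed.
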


Since we see that $\sigma(F_p)\le 0$ for any fiber $F_p$ from the list, we have the following:

\begin{cor} \label{403signcor}
The signature of a complex surface with a primitive cyclic covering fibration of type $(4,0,3)$ is not positive.
\end{cor}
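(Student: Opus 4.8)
The plan is to deduce the statement directly from the local--global decomposition of the signature combined with the explicit classification just obtained, so that almost no new work is required. By Corollary~\ref{signcor}, for a primitive cyclic covering fibration $f\colon S\to B$ of type $(4,0,3)$ the signature concentrates on the finitely many singular fibers:
$$
{\rm Sign}(S)=\sum_{p\in B}\sigma(F_p).
$$
Hence it suffices to show that the local signature $\sigma(F_p)$ is non-positive for every fiber germ $F_p\in\mathcal{A}_{4,0,3}$, and then sum over $p\in B$.

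First I would invoke Theorem~\ref{classthm}, which asserts that every fiber of such a fibration belongs to one of the finitely many types tabulated in this section, and whose accompanying tables record the closed-form value of $\sigma(F_p)$ for each type. Because that classification is exhaustive, verifying non-positivity reduces to inspecting finitely many explicit expressions rather than arguing geometrically fiber by fiber.

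Next I would read off the $\sigma$-row of each of the three tables and check the sign. In every case $\sigma(F_p)$ is an affine function of the attached indices with non-positive coefficients and non-positive constant term; for instance $\sigma(0_{i_1,\dots,i_m})=-\tfrac{16}{15}k$, $\sigma({\rm I}_{i,j,l})=-\tfrac{16}{15}k-\tfrac{7}{5}l-\tfrac{7}{5}$, and $\sigma({\rm VIII})=-\tfrac{7}{15}$. Since the indices $i,j,k,l$ are all non-negative integers by construction, each expression is $\le 0$, with equality occurring only for the generic smooth fiber of type $(0)$, where $k=0$. In particular $\sigma(F_p)<0$ for every \emph{singular} fiber, recovering the ``moreover'' clause of Theorem~\ref{classthm}. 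Summing over $p\in B$ then yields ${\rm Sign}(S)=\sum_{p}\sigma(F_p)\le 0$, as claimed.

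The hard part is not in this corollary at all: all the substance lies in the proof of Theorem~\ref{classthm}, where the singularity-diagram analysis of \S3 both enumerates the fiber types and computes each $\sigma(F_p)$. For the present deduction the only points one must take care with are that the classification is genuinely complete, so that no fiber type carrying a positive local signature has been overlooked, and that the tabulated values of $\sigma(F_p)$ are correctly evaluated from the formula in Corollary~\ref{signcor} using the singularity indices $\alpha_0$, $\alpha_1$ and $\varepsilon$ of each type; both are guaranteed by the case analysis already carried out. Once these are in hand, non-positivity is an immediate term-by-term inspection.
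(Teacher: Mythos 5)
Your proposal is correct and follows exactly the paper's own route: the paper deduces the corollary in one line from the localization ${\rm Sign}(S)=\sum_{p}\sigma(F_p)$ of Corollary~\ref{signcor} together with the observation that the tabulated values of $\sigma(F_p)$ in Theorem~\ref{classthm} are all non-positive. Your term-by-term sign check of the $\sigma$-rows is precisely the verification the paper relies on.
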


\begin{exa}\normalfont \label{constexa}
We can construct a primitive cyclic covering fibration of type $(4,0,3)$ having one singular fiber of any type. 
Indeed, we construct a fibration $f$ with a multiple fiber $F_p$ of type $({\rm V\hspace{-.1em}I\hspace{-.1em}I\hspace{-.1em}I})$ as follows. 
Let $P:=\mathbb{P}^1\times\mathbb{P}^1$, $B:=\mathbb{P}^1$ and $\varphi:=pr_2\colon P\rightarrow B$. 
Let $h$ and $\Gamma$ respectively denote general fibers of $pr_1$ and $\varphi=pr_2$ and set $\mathfrak{d}:=2h+m\Gamma$. We fix $p\in B$ arbitrarily. For a sufficiently large $m$, we can take $R\in |3\mathfrak{d}|$ such that $\Gamma_p\subset R$, $R\setminus \Gamma_p$ is smooth, and the appearance of $R$ and $\Gamma_p$ is as follows (see $(10)$ in the classification (iii)).

\setlength\unitlength{0.45cm}
\begin{figure}[H]
\begin{center}
\begin{tabular}{c}
 \begin{minipage}{0\hsize}
 \begin{center}
 \begin{picture}(0,3)
 \put(-5,0){\line(1,0){10}}
 \qbezier(0,0)(3,0)(3,3)
 \qbezier(0,0)(3,0)(3,-3)
 \qbezier(0,0)(3,0)(3,2)
 \qbezier(0,0)(-3,0)(-3,-2)
 \put(4,-1){$\Gamma_p$}
 \put(0,2){$R\supset \Gamma_p$}
 \end{picture}
 \end{center}
 \end{minipage}
\end{tabular}
\end{center}
\end{figure}
\ \\
\ \\
Let $\widetilde{\psi}=\psi_w\circ\psi_z\circ\psi_y\circ\psi_x \colon \widetilde{P}\rightarrow P$ be the composite of $4$ blow-ups at $x,y,z,w$ as follows.

\setlength\unitlength{0.20cm}
\begin{figure}[H]
\begin{center}
\begin{tabular}{c}

 \begin{minipage}{0\hsize}
 \begin{center}
 \begin{picture}(37,6)
 \put(-32.5,-1.3){$x$}
 \put(-37,0){\line(1,0){10}}
 \qbezier(-32,0)(-29,0)(-29,3)
 \qbezier(-32,0)(-29,0)(-29,-3)
 \qbezier(-32,0)(-29,0)(-29,2)
 \qbezier(-32,0)(-35,0)(-35,-2)
 \put(-17,-1.3){$y$}
 \put(-21,0){\line(1,0){10}}
 \put(-16,-5){\line(0,1){10}}
 \qbezier(-16,0)(-14,0)(-14,2)
 \qbezier(-16,0)(-18,0)(-18,2)
 \qbezier(-16,0)(-16,3)(-13,3)
 \qbezier(-16,0)(-16,-3)(-13,-3)
 \put(0.5,-1.5){$z$}
 \put(0.5,2.5){$w$}
 \put(-5,0){\line(1,0){10}}
 \put(0,-5){\line(0,1){10}} 
 \put(-5,4){\line(1,0){10}}
 \qbezier(-2,-2)(0,0)(2,2)
 \qbezier(-2,2)(0,4)(2,6)
 \put(20,3){$w$}
 \put(11,0){\line(1,0){10}}
 \multiput(16,-5)(0,0.4){25}{\line(0,1){0.2}}
 \put(11,4){\line(1,0){10}}
 \put(19.5,2){\line(0,1){4}}
 \put(14,2){\line(1,0){4}}
 \qbezier(18,2.5)(19.5,4)(21,5.5)
 \put(27,0){\line(1,0){10}}
 \multiput(32,-5)(0,0.4){25}{\line(0,1){0.2}}
 \put(27,3){\line(1,0){10}}
 \put(30,1.5){\line(1,0){4}}
 \put(33,5){\line(1,0){4}}
 \put(33.5,4){\line(1,0){3}}
 \multiput(35,0.5)(0,0.4){13}{\line(0,1){0.2}}
 \put(-22,0){\vector(-1,0){4}}
 \put(-6,0){\vector(-1,0){4}}
 \put(10,0){\vector(-1,0){4}}
 \put(26,0){\vector(-1,0){4}}
 \end{picture}
 \end{center}
 \end{minipage}

\end{tabular}
\end{center}
\end{figure}

\ \\
\ \\
Then, the divisor $\widetilde{R}:=
\widetilde{\psi}^{\ast}R-3({\bf E}_x+{\bf E}_y
+{\bf E}_z+{\bf E}_w)$ is smooth and $3$-divisible in $\mathrm{Pic}(\widetilde{P})$, where ${\bf E}_{\bullet}$ denotes the total transform of the exceptional curve $E_{\bullet}$ for $\psi_{\bullet}$.
Then, we can take a $3$-cyclic covering $\widetilde{\theta}\colon \widetilde{S}\rightarrow \widetilde{P}$ branched over $\widetilde{R}$.
Let $f\colon S\rightarrow B$ be the relatively minimal model of  $\widetilde{f}:=\varphi\circ\widetilde{\psi}\circ\widetilde{\theta}\colon \widetilde{S}\rightarrow B$.
Then the fiber $F_p$ of $f$ over $p$ is clearly of type $({\rm V\hspace{-.1em}I\hspace{-.1em}I\hspace{-.1em}I})$. 
We can construct a fibration with another type of singular fiber in a similar way.
\end{exa}

\begin{exa}\normalfont
We can construct primitive cyclic covering fibrations of type $(g,0,3)$ with a triple fiber, except for $g=7$.
Note that all the possible genera $g$ are $3k+1$, $k\in \mathbb{Z}_{>0}$.

Firstly, we consider the following admissible sequence of abstract singularity diagrams for $k>0$.

\begin{table}[H]
\begin{center}
\begin{tabular}{c}

 \begin{minipage}{0.3\hsize}
 \begin{tabular}{ccc}
 & & \\ \cline{1-3}                     
 \multicolumn{1}{|c|}{$(z_1,3)$}& $\cdots$ & \multicolumn{1}{|c|}{$(z_k,3)$}\\ \cline{1-3}
 \multicolumn{1}{|c|}{$(y_1,4)$}& $\cdots$ & \multicolumn{1}{|c|}{$(y_k,4)$}\\ \cline{1-3}
 \multicolumn{1}{|c|}{$(x_1,4)$}& $\cdots$ & \multicolumn{1}{|c|}{$(x_k,4)$}\\ \cline{1-3}
 \multicolumn{3}{c}{\lower1ex\hbox{$\mathcal{D}^1$}}
 \end{tabular}
 \end{minipage}

 \begin{minipage}{0.13\hsize}
 \begin{tabular}{c}
   \\ \cline{1-1}
 \multicolumn{1}{|c|}{$(w_i,3)$} \\ \cline{1-1}
 \multicolumn{1}{|c|}{$(y_i,4)$} \\ \cline{1-1}
 \multicolumn{1}{c}{\lower1ex\hbox{$\mathcal{D}_{x_i}^1$}}
 \end{tabular}
 \end{minipage}

\begin{minipage}{0.23\hsize}
 \begin{tabular}{cc}
   & \\
   & \\ \cline{1-2}
 \multicolumn{1}{|c|}{$(z_i,3)$} & \multicolumn{1}{|c|}{$(w_i,3)$}\\ \cline{1-2}
 \multicolumn{2}{c}{\lower1ex\hbox{$\mathcal{D}_{y_i}^1$}}
 \end{tabular}
 \end{minipage}

 \begin{minipage}{0.1\hsize}
 \begin{tabular}{c}
  \\
  \\
 $(1,1,1)$ \\
 \multicolumn{1}{c}{\lower1ex\hbox{$\mathcal{D}_{z_i}^0$}}
 \end{tabular}
 \end{minipage}

 \begin{minipage}{0.15\hsize}
 \begin{tabular}{c}
  \\
  \\
 $(1,1,1)$ \\
 \multicolumn{1}{c}{\lower1ex\hbox{$\mathcal{D}_{w_i}^0$}}
 \end{tabular}
 \end{minipage}

\end{tabular}
\end{center}
\end{table}

\noindent One can check that the sequence gives us a triple fiber $F_p$. 
By an argument similar to that in Example \ref{constexa}, we obtain a primitive cyclic covering fibration $f$ of type $(6k-2,0,3)$ which has such a triple fiber for $k>0$.

Next, we consider the following admissible sequence of abstract singularity diagrams for $k\ge 0$.

\begin{table}[H]
\begin{center}
\begin{tabular}{c}

 \begin{minipage}{0.4\hsize}
 \begin{tabular}{cccc}
 & & & \\ \cline{1-4}                     
 \multicolumn{1}{|c|}{$(z,6)$}& \multicolumn{1}{|c|}{$(z_1,3)$}& $\cdots$ & \multicolumn{1}{|c|}{$(z_k,3)$}\\ \cline{1-4}
 \multicolumn{1}{|c|}{$(y,7)$}& \multicolumn{1}{|c|}{$(y_1,4)$}& $\cdots$ & \multicolumn{1}{|c|}{$(y_k,4)$}\\ \cline{1-4}
 \multicolumn{1}{|c|}{$(x,7)$}& \multicolumn{1}{|c|}{$(x_1,4)$}& $\cdots$ & \multicolumn{1}{|c|}{$(x_k,4)$}\\ \cline{1-4}
 \multicolumn{4}{c}{\lower1ex\hbox{$\mathcal{D}^1$}}
 \end{tabular}
 \end{minipage}

 \begin{minipage}{0.13\hsize}
 \begin{tabular}{c}
   \\ \cline{1-1}
 \multicolumn{1}{|c|}{$(w,3)$} \\ \cline{1-1}
 \multicolumn{1}{|c|}{$(y,7)$} \\ \cline{1-1}
 \multicolumn{1}{c}{\lower1ex\hbox{$\mathcal{D}_{x}^1$}}
 \end{tabular}
 \end{minipage}

\begin{minipage}{0.2\hsize}
 \begin{tabular}{cc}
   & \\
   & \\ \cline{1-2}
 \multicolumn{1}{|c|}{$(z,6)$} & \multicolumn{1}{|c|}{$(w,3)$}\\ \cline{1-2}
 \multicolumn{2}{c}{\lower1ex\hbox{$\mathcal{D}_{y}^1$}}
 \end{tabular}
 \end{minipage}

 \begin{minipage}{0.2\hsize}
 \begin{tabular}{c}
  \\
  \\
 $(1,1,1,1,1,1)$ \\
 \multicolumn{1}{c}{\lower1ex\hbox{$\mathcal{D}_{z}^0$}}
 \end{tabular}
 \end{minipage}

\end{tabular}
\end{center}
\end{table}

\begin{table}[H]
\begin{center}
\begin{tabular}{c}

 \begin{minipage}{0.16\hsize}
 \begin{tabular}{c}
  \\
  \\
 $(1,1,1)$ \\
 \multicolumn{1}{c}{\lower1ex\hbox{$\mathcal{D}_{w}^0$}}
 \end{tabular}
 \end{minipage}

 \begin{minipage}{0.18\hsize}
 \begin{tabular}{c}
   \\ \cline{1-1}
 \multicolumn{1}{|c|}{$(w_i,3)$} \\ \cline{1-1}
 \multicolumn{1}{|c|}{$(y_i,4)$} \\ \cline{1-1}
 \multicolumn{1}{c}{\lower1ex\hbox{$\mathcal{D}_{x_i}^1$}}
 \end{tabular}
 \end{minipage}

\begin{minipage}{0.23\hsize}
 \begin{tabular}{cc}
   & \\
   & \\ \cline{1-2}
 \multicolumn{1}{|c|}{$(z_i,3)$} & \multicolumn{1}{|c|}{$(w_i,3)$}\\ \cline{1-2}
 \multicolumn{2}{c}{\lower1ex\hbox{$\mathcal{D}_{y_i}^1$}}
 \end{tabular}
 \end{minipage}

 \begin{minipage}{0.13\hsize}
 \begin{tabular}{c}
  \\
  \\
 $(1,1,1)$ \\
 \multicolumn{1}{c}{\lower1ex\hbox{$\mathcal{D}_{z_i}^0$}}
 \end{tabular}
 \end{minipage}

 \begin{minipage}{0.15\hsize}
 \begin{tabular}{c}
  \\
  \\
 $(1,1,1)$ \\
 \multicolumn{1}{c}{\lower1ex\hbox{$\mathcal{D}_{w_i}^0$}}
 \end{tabular}
 \end{minipage}

\end{tabular}
\end{center}
\end{table}

\noindent One can check that the sequence gives us a triple fiber $F_p$. 
Similarly as in Example \ref{constexa}, we obtain a primitive cyclic covering fibration $f$ of type $(6k+15,0,3)$ with such a triple fiber for $k\ge 0$.

\end{exa} 

\section{Fibers of hyperelliptic fibrations of genus $3$}

Let $f\colon S\rightarrow B$ be a hyperelliptic fibration of genus $g$, that is, a primitive cyclic covering fibration of type $(g,0,2)$.
We use freely the notation in the previous sections.
In order to classify fibers of hyperelliptic fibration of genus $g$, it is sufficient to classify admissible sequences of abstract singularity diagrams with $n=2$ and $t^1=r=2g+2$.
However, such sequences are too many to classify them all, and from the point of view of invariants, it seems that we do not have to find out all fibers explicitly, because there are singularities of $R$ which do not affect important invariants in the hyperelliptic case.
Indeed, singularities of multiplicity $2$ or $3$ contribute nothing to the Horikawa index when $n=2$.
If a singular point $x$ with multiplicity $2$ or $3$ has no singular points with multiplicity greater than $4$ infinitely near it, we call it a {\it negligible singularity}.
If a singular point $x$ is not a negligible singularity, we call it an {\it essential singularity}.
We decompose $\widetilde{\psi}\colon \widetilde{W}\to W$ into 
$\overline{\psi}\colon \widetilde{W}\to \widehat{W}$, the composite of blow-ups at negligible singularities and $\widehat{\psi}\colon \widehat{W}\to W$, the composite of blow-ups at essential singularities.
We call $\widehat{\psi}\colon \widehat{W}\to W$ the {\em even resolution of essential singularities}.

In this section, we first introduce abstract essential singularities and admissible sequences of them in order to classify fibers of hyperelliptic fibrations of genus $g$ according to the Horikawa index.
Next, we classify all fibers of hyperelliptic fibrations of genus $3$ into $12$ types and compute the Horikawa index for any types by classifying admissible sequences of abstract essential singularities.

\subsection{Abstract essential singularity diagrams}

\begin{defn}
Let $\mathcal{D}$ be an abstract singularity diagram with $n=2$ and $(x_{i,j},m_{i,j})$ an entry of $\mathcal{D}$. 
Then $(x_{i,j},m_{i,j})$ is {\em strictly negligible} if one of the following holds:

\smallskip

\noindent
(i) $m_{i,j}=2$.

\smallskip

\noindent
(ii) $m_{i,j}=3$, $j<i_{\mathrm{bm}}$ and $m_{i,j+1}\neq 4$.

\smallskip

\noindent
(iii) $m_{i,j}=3$, $j=i_{\mathrm{bm}}$ and $\mathcal{D}$ is of type $1$.

\smallskip

\noindent
(iv) $m_{i,j}=3$, $j=i_{\mathrm{bm}}>1$ and $m_{i,j-1}=3$.

\end{defn}

\begin{rem}
If $\mathcal{D}$ is a singularity diagram and $(x_{i,j},m_{i,j})$ is strictly negligible, then $x_{i,j}$ is a negligible singularity. However, the inverse is not true.
\end{rem}

\begin{defn}
Let $t\ge 2$ be an integer. We define an {\em abstract essential singularity diagram} $\mathcal{D}$ for $t$ to be an abbreviation of an abstract singularity diagram for $(2,t)$ by the following rule:

\smallskip

\noindent
(i) We denote a strictly negligible entry $(x_{i,j},m_{i,j})$ by $(x_{i,j},{\rm I\hspace{-.1em}I})$ if $m_{i,j}=2$, or $(x_{i,j},{\rm I\hspace{-.1em}I\hspace{-.1em}I})$ if $m_{i,j}=3$.

\smallskip

\noindent
(ii) We leave it blank for a strictly negligible entry.
\end{defn}

\begin{defn}
Let $\mathcal{D}_1,\mathcal{D}_2,\dots,\mathcal{D}_N$ be a sequence of abstract essential singularity diagrams. 
Let $I:=\{(\mu,i)|1\le \mu \le N,\; 1\le i\le l^\mu \; \text{and}\; i_{\mathrm{bm}}>0\}$ and $I_k:=\{(\mu,i)\in I|m_{i,1}^\mu\ge k\}$.
Then $\mathcal{D}_1,\mathcal{D}_2,\dots,\mathcal{D}_N$ is said to be {\em admissible} if there exist a subset $I_4\subset I_{\mathrm{ess}}\subset I_3$ and a bijection $\Phi\colon I_{\mathrm{ess}}\to \{2,\dots,N\}$ such that $\mu$ and $\nu:=\Phi(\mu,i)$ satisfy  $\mu<\Phi(\mu,i)$ and Lemma~\ref{connlem} $(1)$, $(2)$, $(3)$ and if $m_{i,1}^\mu=3$, then $\mathcal{D}_\nu$ has no strictly negligible entries for any $(\mu,i)\in I_{\mathrm{ess}}$.

\smallskip

\noindent
Let $\mathcal{D}_1,\mathcal{D}_2,\dots,\mathcal{D}_N$ be an admissible sequence of abstract essential singularity diagrams. Then, $(x_{i,1}^\mu,m_{i,1}^\mu)$ is said to be {\em negligible} (resp.\ {\em essential}\ ) if $(\mu,i)\not\in I_{\mathrm{ess}}$ (resp.\ $(\mu,i)\in I_{\mathrm{ess}}$). Clearly, a strictly negligible entry $(x_{i,1}^\mu,m_{i,1}^\mu)$ is negligible. We also denote a negligible entry $(x_{i,1}^\mu,3)$ by $(x_{i,1}^\mu,{\rm I\hspace{-.1em}I\hspace{-.1em}I})$.
\end{defn}

\begin{defn}
Two admissible sequences $\mathcal{D}_1$,\dots,$\mathcal{D}_N$ and $\mathcal{D}'_1$,\dots,$\mathcal{D}'_N$ of abstract essential singularity diagrams are {\em equivalent}, if there exists a bijection $\Psi\colon \{1,\dots,N\}\to \{1,\dots,N\}$ with $\Psi(1)=1$ such that the essential part of $\mathcal{D}_k$ is equivalent to that of $\mathcal{D}_{\Psi(k)}$ for any $1\le k\le N$, and $\mathcal{D}_\mu \overset{i}{\rightsquigarrow} \mathcal{D}_\nu$ if and only if $\mathcal{D}_{\Psi(\mu)} \overset{i}{\rightsquigarrow} \mathcal{D}_{\Psi(\nu)}$ for any $\mu<\nu$ and $1\le i\le l^\mu$ (after a suitable replacement of columns of $D_\mu$), where the essential part of $\mathcal{D}_k$ is the diagram which consists of essential entries of $\mathcal{D}_k$ only, and the definition of equivalence of essential parts of abstract essential singularity diagrams is the same as that of abstract singularity diagrams.

\end{defn}

In order to classify all fibers of hyperelliptic fibrations with genus $g$ according to the Horikawa index, it suffices to classify admissible sequences of abstract essential singularity diagrams with $t^1=r=2g+2$ and $m_{i,1}^1\le g+1+k$ if $\mathcal{D}_1$ is of type $k$ for $k=0,1$.
We proceed with the following steps.

\begin{enumerate}

\item[(i)] Classify abstract essential singularity diagrams of type $k$ for $2g+2$ with $m_{i,1}\le g+1+k$ for $k=0,1$.

\item[(ii)] Classify admissible sequences of abstract essential singularity diagrams with $t^1=3,4,\dots,g+2$ and $\mathcal{D}_1$ is of type $k$ if $t^1\equiv k$ (mod $2$).

\item[(iii)] Classify admissible sequences of abstract essential singularity diagrams with $t^1=r=2g+2$ and $m_{i,1}^1\le g+1+k$ if $\mathcal{D}_1$ is of type $k$ for $k=0,1$.

\end{enumerate}

\subsection{Classification: genus $3$ case}
\ \\

\noindent (i) All essential parts of abstract essential singularity diagrams of type $0$ for $8$ with $m_{i,1}\le 4$ are as follows.

\begin{table}[H]
\begin{center}
\begin{tabular}{|c|c|c|}
\multicolumn{3}{c}{} \\ \hline
(0,a) &(0,b) & (0,c) \\ \hline
 \begin{minipage}{37mm}
 \begin{center}
 \ \\
$(\bullet)$, ${\rm I\hspace{-.1em}I}$, ${\rm I\hspace{-.1em}I\hspace{-.1em}I}$ \\
only
 \end{center}
 \end{minipage}

 & \begin{minipage}{37mm}
 \begin{center}
 \begin{tabular}{c}
  \\ \cline{1-1}
 \multicolumn{1}{|l|}{3}  \\ \cline{1-1}
 \end{tabular}
 \end{center}
 \end{minipage}

 & \begin{minipage}{37mm}
 \begin{center}
 \begin{tabular}{c}
  \\ \cline{1-1}
 \multicolumn{1}{|l|}{4}  \\ \cline{1-1}
 \end{tabular}
 \end{center}
 \end{minipage}

\\
 & & \\ \hline
\begin{minipage}{37mm}
\begin{center}

 \end{center}
 \end{minipage}
  &
 \begin{minipage}{37mm}
\begin{center}
$0$
 \end{center}
 \end{minipage}
  & 
\begin{minipage}{37mm}
\begin{center}
$0$, $\rm I\hspace{-.1em}I$
 \end{center}
 \end{minipage}
 \\ \hline
$(0)$ & $({\rm I\hspace{-.1em}I})$ & $({\rm I\hspace{-.1em}I})$  \\ \hline
\end{tabular}
\end{center}
\end{table}

\begin{table}[H]
\begin{center}
\begin{tabular}{|c|c|c|}
\multicolumn{3}{c}{} \\ \hline
 (0,d) & (0,e) & (0,f) \\ \hline
 \begin{minipage}{37mm}
 \begin{center}
 \begin{tabular}{c}
  \\ \cline{1-1}
\multicolumn{1}{|l|}{4} \\ \cline{1-1}
 \multicolumn{1}{|l|}{3} \\ \cline{1-1}
 \end{tabular}
 \end{center}
 \end{minipage}

 &  \begin{minipage}{37mm}
 \begin{center}
\begin{tabular}{c}
  \\ \cline{1-1}
\multicolumn{1}{|l|}{3} \\ \cline{1-1}
 \multicolumn{1}{|l|}{4} \\ \cline{1-1}
 \end{tabular}
 \end{center}
 \end{minipage}
 & \begin{minipage}{37mm}
 \begin{center}
 \begin{tabular}{c}
  \\ \cline{1-1}
\multicolumn{1}{|l|}{4} \\ \cline{1-1}
 \multicolumn{1}{|l|}{4} \\ \cline{1-1}
 \end{tabular}
 \end{center}
 \end{minipage}

\\
 & & \\ \hline
 \begin{minipage}{37mm}
\begin{center}
$0$, $\rm I\hspace{-.1em}I$
 \end{center}
 \end{minipage}
  & 
\begin{minipage}{37mm}
\begin{center}
 $0$
 \end{center}
 \end{minipage}
  & 
\begin{minipage}{37mm}
\begin{center}
$0$
 \end{center}
 \end{minipage}
 \\ \hline
$({\rm V\hspace{-.1em}I})$ & $({\rm V\hspace{-.1em}I\hspace{-.1em}I\hspace{-.1em}I})$ & $({\rm I\hspace{-.1em}V})$  \\ \hline
\end{tabular}
\end{center}
\end{table}

\begin{table}[H]
\begin{center}
\begin{tabular}{|c|c|c|}
\multicolumn{3}{c}{} \\ \hline
 (0,g) & (0,h) & (0,i)\\ \hline
 \begin{minipage}{37mm}
 \begin{center}
 \begin{tabular}{cc}
  & \\ \cline{1-2}
 \multicolumn{1}{|l|}{3}& \multicolumn{1}{|l|}{3}  \\ \cline{1-2}
 \end{tabular}
 \end{center}
 \end{minipage}

 & \begin{minipage}{37mm}
 \begin{center}
 \begin{tabular}{cc}
  & \\ \cline{1-2}
 \multicolumn{1}{|l|}{4}& \multicolumn{1}{|l|}{3}  \\ \cline{1-2}
 \end{tabular}
 \end{center}
 \end{minipage}

 &  \begin{minipage}{37mm}
 \begin{center}
\begin{tabular}{cc}
  & \\ \cline{1-2}
 \multicolumn{1}{|l|}{4}& \multicolumn{1}{|l|}{4}  \\ \cline{1-2}
 \end{tabular}
 \end{center}
 \end{minipage}

\\
 & & \\ \hline
\begin{minipage}{37mm}
\begin{center}
$(0,0)$
 \end{center}
 \end{minipage}
  & 
 \begin{minipage}{37mm}
\begin{center}
$(0,0)$
 \end{center}
 \end{minipage}
  & 
\begin{minipage}{37mm}
\begin{center}
$(0,0)$
 \end{center}
 \end{minipage}
 \\ \hline
$({\rm I\hspace{-.1em}I})$ & $({\rm I\hspace{-.1em}I})$ & $({\rm I\hspace{-.1em}I})$  \\ \hline
\end{tabular}
\end{center}
\end{table}

All essential parts of abstract essential singularity diagrams of type $1$ for $8$ with $m_{i,1}\le 5$ are as follows.

\begin{table}[H]
\begin{center}
\begin{tabular}{|c|c|c|c|}
\multicolumn{4}{c}{} \\ \hline
(1,a)& (1,b) & (1,c) & (1,d) \\ \hline
 \begin{minipage}{32mm}
 \begin{center}
 \ \\
${\rm I\hspace{-.1em}I}$, ${\rm I\hspace{-.1em}I\hspace{-.1em}I}$ \\
only
 \end{center}
 \end{minipage}

 & \begin{minipage}{32mm}
 \begin{center}
 \begin{tabular}{c}
  \\ \cline{1-1}
 \multicolumn{1}{|l|}{4} \\ \cline{1-1}
 \end{tabular}
 \end{center}
 \end{minipage}

 & \begin{minipage}{32mm}
 \begin{center}
  \begin{tabular}{c}
  \\ \cline{1-1}
 \multicolumn{1}{|l|}{5} \\ \cline{1-1}
 \end{tabular}
 \end{center}
 \end{minipage}

 & \begin{minipage}{32mm}
 \begin{center}
 \begin{tabular}{c}
  \\ \cline{1-1}
 \multicolumn{1}{|l|}{4} \\ \cline{1-1}
 \multicolumn{1}{|l|}{3} \\ \cline{1-1}
 \end{tabular}
 \end{center}
 \end{minipage}
\\
 & & & \\ \hline

  & $0$, ${\rm I\hspace{-.1em}I}$, ${\rm I\hspace{-.1em}I\hspace{-.1em}I}$
  & ${\rm I\hspace{-.1em}I}$, ${\rm I\hspace{-.1em}I\hspace{-.1em}I}$
  & $0$, ${\rm I\hspace{-.1em}I}$, ${\rm I\hspace{-.1em}I\hspace{-.1em}I}$ \\ \hline
$(0)$ & $({\rm I})$ & $({\rm I})$  & $({\rm I})$ \\ \hline
\end{tabular}
\end{center}
\end{table}

\begin{table}[H]
\begin{center}
\begin{tabular}{|c|c|c|c|}
\multicolumn{4}{c}{} \\ \hline
 (1,e) & (1,f) & (1,g) & (1,h) \\ \hline
 \begin{minipage}{32mm}
 \begin{center}
 \begin{tabular}{c}
  \\ \cline{1-1}
 \multicolumn{1}{|l|}{4} \\ \cline{1-1}
 \multicolumn{1}{|l|}{4} \\ \cline{1-1}
 \end{tabular}
 \end{center}
 \end{minipage}

& \begin{minipage}{32mm}
 \begin{center}
 \begin{tabular}{c}
  \\ \cline{1-1}
 \multicolumn{1}{|l|}{4} \\ \cline{1-1}
 \multicolumn{1}{|l|}{5} \\ \cline{1-1}
 \end{tabular}
 \end{center}
 \end{minipage}

  & \begin{minipage}{32mm}
 \begin{center}
 \begin{tabular}{c}
  \\ \cline{1-1}
 \multicolumn{1}{|l|}{5} \\ \cline{1-1}
 \multicolumn{1}{|l|}{5} \\ \cline{1-1}
 \end{tabular}
 \end{center}
 \end{minipage}

 & \begin{minipage}{32mm}
 \begin{center}
  \begin{tabular}{c}
  \\ \cline{1-1}
 \multicolumn{1}{|l|}{4} \\ \cline{1-1}
 \multicolumn{1}{|l|}{3} \\ \cline{1-1}
 \multicolumn{1}{|l|}{4} \\ \cline{1-1}
 \end{tabular}
 \end{center}
 \end{minipage}
\\
 & & & \\ \hline
     $0$, ${\rm I\hspace{-.1em}I}$, ${\rm I\hspace{-.1em}I\hspace{-.1em}I}$
  & $0$, ${\rm I\hspace{-.1em}I}$, ${\rm I\hspace{-.1em}I\hspace{-.1em}I}$
  & ${\rm I\hspace{-.1em}I}$, ${\rm I\hspace{-.1em}I\hspace{-.1em}I}$
  & $0$, ${\rm I\hspace{-.1em}I}$
 \\ \hline
$({\rm V})$ & $({\rm I\hspace{-.1em}X})$ & $({\rm V\hspace{-.1em}I\hspace{-.1em}I})$  & $({\rm X})$ \\ \hline
\end{tabular}
\end{center}
\end{table}

\begin{table}[H]
\begin{center}
\begin{tabular}{|c|c|c|c|}
\multicolumn{4}{c}{} \\ \hline
(1,i) & (1,j) & (1,k) & (1,l) \\ \hline
 \begin{minipage}{32mm}
 \begin{center}
 \begin{tabular}{c}
  \\ \cline{1-1}
 \multicolumn{1}{|l|}{4} \\ \cline{1-1}
 \multicolumn{1}{|l|}{3} \\ \cline{1-1}
 \multicolumn{1}{|l|}{4} \\ \cline{1-1}
 \multicolumn{1}{|l|}{3} \\ \cline{1-1}
 \end{tabular}
 \end{center}
 \end{minipage}

 &  \begin{minipage}{32mm}
 \begin{center}
 \begin{tabular}{c}
  \\ \cline{1-1}
 \multicolumn{1}{|l|}{6} \\ \cline{1-1}
 \multicolumn{1}{|l|}{5} \\ \cline{1-1}
 \end{tabular}
 \end{center}
 \end{minipage}

& \begin{minipage}{32mm}
 \begin{center}
\begin{tabular}{cc}
  & \\ \cline{1-2}
 \multicolumn{1}{|l|}{4} & \multicolumn{1}{|l|}{4}  \\ \cline{1-2}
 \end{tabular}
 \end{center}
 \end{minipage}
 & \begin{minipage}{32mm}
 \begin{center}
 \begin{tabular}{cc}
  & \\ \cline{1-2}
 \multicolumn{1}{|l|}{5} & \multicolumn{1}{|l|}{4}  \\ \cline{1-2}
 \end{tabular}
 \end{center}
 \end{minipage}
\\
 & & & \\ \hline
     $0$
  & $0$
  & 
\begin{minipage}{32mm}
\begin{center}
$(0,0)$, $({\rm I\hspace{-.1em}I},0)$, \\
$({\rm I\hspace{-.1em}I\hspace{-.1em}I},0)$, 
$({\rm I\hspace{-.1em}I}, {\rm I\hspace{-.1em}I})$ 
 \end{center}
 \end{minipage}
 &
\begin{minipage}{32mm}
\begin{center}
$({\rm I\hspace{-.1em}I},0)$, 
$({\rm I\hspace{-.1em}I\hspace{-.1em}I},0)$, \\
$({\rm I\hspace{-.1em}I}, {\rm I\hspace{-.1em}I})$ 
 \end{center}
 \end{minipage}
 \\ \hline
$({\rm X\hspace{-.1em}I})$ & $({\rm I\hspace{-.1em}I\hspace{-.1em}I})$ & $({\rm I})$  & $({\rm I})$ \\ \hline
\end{tabular}
\end{center}
\end{table}

\begin{table}[H]
\begin{center}
\begin{tabular}{|c|c|c|c|}
\multicolumn{4}{c}{} \\ \hline
(1,m) & (1,n) & (1,o) & (1,p) \\ \hline
 \begin{minipage}{32mm}
 \begin{center}
  \begin{tabular}{cc}
  & \\ \cline{1-2}
 \multicolumn{1}{|l|}{5} & \multicolumn{1}{|l|}{5}  \\ \cline{1-2}
 \end{tabular}
 \end{center}
 \end{minipage}

 & \begin{minipage}{32mm}
 \begin{center}
 \begin{tabular}{cc}
  & \\ \cline{1-1}
\multicolumn{1}{|l|}{4} & \\ \cline{1-2}
 \multicolumn{1}{|l|}{3} & \multicolumn{1}{|l|}{4}  \\ \cline{1-2}
 \end{tabular}
 \end{center}
 \end{minipage}

 &  \begin{minipage}{32mm}
 \begin{center}
 \begin{tabular}{cc}
  & \\ \cline{1-1}
\multicolumn{1}{|l|}{4} & \\ \cline{1-2}
 \multicolumn{1}{|l|}{3} & \multicolumn{1}{|l|}{5}  \\ \cline{1-2}
 \end{tabular}
 \end{center}
 \end{minipage}

& \begin{minipage}{32mm}
 \begin{center}
\begin{tabular}{cc}
  & \\ \cline{1-2}
 \multicolumn{1}{|l|}{4} & \multicolumn{1}{|l|}{4} \\ \cline{1-2}
 \multicolumn{1}{|l|}{3} & \multicolumn{1}{|l|}{3}  \\ \cline{1-2}
 \end{tabular}
 \end{center}
 \end{minipage}
\\
 & & & \\ \hline
\begin{minipage}{32mm}
\begin{center}
$({\rm I\hspace{-.1em}I}, {\rm I\hspace{-.1em}I})$ 
 \end{center}
 \end{minipage}
  & 
 \begin{minipage}{32mm}
\begin{center}
$(0,0)$, $({\rm I\hspace{-.1em}I},0)$, \\
$(0, {\rm I\hspace{-.1em}I})$ 
 \end{center}
 \end{minipage}
  & 
\begin{minipage}{32mm}
\begin{center}
$(0, {\rm I\hspace{-.1em}I})$ 
 \end{center}
 \end{minipage}
  & 
\begin{minipage}{32mm}
\begin{center}
$(0,0)$
 \end{center}
 \end{minipage}
 \\ \hline
$({\rm I})$ & $({\rm I})$ & $({\rm I})$  & $({\rm I})$ \\ \hline
\end{tabular}
\end{center}
\end{table}

\noindent
where $0$, ${\rm I\hspace{-.1em}I}$, ${\rm I\hspace{-.1em}I\hspace{-.1em}I}$ in the second row from the bottom are all possible strictly negligible entries on the top of the column of the diagram and $0$ means no strictly negligible entries, and the entry $(\bullet)$ in the bottom row is the type of a fiber corresponding to the diagram which is defined as:

\begin{defn}\normalfont
Let $f\colon S\to B$ be a hyperelliptic fibered surface of genus $3$. 
We say a fiber $F_p$ of $f$ $($resp.\ $\widetilde{F}_p$ of $\widetilde{f}$, $\Gamma_p$ of $\varphi$, $\widetilde{\Gamma}_p$ of $\widetilde{\varphi})$ is a {\em fiber of type $(0)$} if the essential part of the singularity diagram of $\Gamma_p$ is (0,a) or (1,a), a {\em fiber of type $({\rm I})$} if that is (1,b), (1,c), (1,d), (1,k), (1,l), (1,m), (1,n), (1,o) or (1,p), a {\em fiber of type $({\rm I\hspace{-.1em}I})$} if that is (0,b), (0,c), (0,g), (0,h) or (0,i), a {\em fiber of type $({\rm I\hspace{-.1em}I\hspace{-.1em}I})$} if that is (1,j), a {\em fiber of type $({\rm I\hspace{-.1em}V})$} if that is (0,f), a {\em fiber of type $({\rm V})$} if that is (1,e), a {\em fiber of type $({\rm V\hspace{-.1em}I})$} if that is (0,d), a {\em fiber of type $({\rm V\hspace{-.1em}I\hspace{-.1em}I})$} if that is (1,g), a {\em fiber of type $({\rm V\hspace{-.1em}I\hspace{-.1em}I\hspace{-.1em}I})$} if that is (0,e), a {\em fiber of type $({\rm I\hspace{-.1em}X})$} if that is (1,f), a {\em
  fiber of type $({\rm X})$} if that is (1,h), a {\em fiber of type $({\rm X\hspace{-.1em}I})$} if that is (1,i).
\end{defn}

\noindent (ii)  All admissible sequences of abstract essential singularity diagrams with $t^1=3$ and $\mathcal{D}_1$ is of type $1$ (i.e. arising from an entry $(x_1,3)$) are as follows.

\begin{table}[H]
\begin{center}
\begin{tabular}{c}

 \begin{minipage}{0.2\hsize}
$\Bigl($
 \begin{tabular}{c}
   \\ \cline{1-1}
 \multicolumn{1}{|c|}{$(x_1,3)$}\\ \cline{1-1}
 \multicolumn{1}{c}{}
 \end{tabular}
$\Bigr)$
 \end{minipage}

\begin{minipage}{0.13\hsize}
 \begin{tabular}{c}
   \\ \cline{1-1}
 \multicolumn{1}{|c|}{$(y_1,4)$}\\ \cline{1-1}
 \multicolumn{1}{c}{\lower1ex\hbox{$\mathcal{D}_{x_1}^1$}}
 \end{tabular}
 \end{minipage}

\begin{minipage}{0.1\hsize}
 \begin{tabular}{c}
  $\cdots$
 \end{tabular}
 \end{minipage}

\begin{minipage}{0.2\hsize}
 \begin{tabular}{c}
   \\ \cline{1-1}
 \multicolumn{1}{|c|}{$(x_k,3)$}\\ \cline{1-1}
 \multicolumn{1}{c}{\lower1ex\hbox{$\mathcal{D}_{y_{k-1}}^0$}}
 \end{tabular}
 \end{minipage}

\begin{minipage}{0.2\hsize}
 \begin{tabular}{c}
   \\ \cline{1-1}
 \multicolumn{1}{|c|}{$(y_k,4)$}\\ \cline{1-1}
 \multicolumn{1}{c}{\lower1ex\hbox{$\mathcal{D}_{x_k}^1$}}
 \end{tabular}
 \end{minipage}

\end{tabular}
\end{center}
\end{table}

\noindent Then, $(x_1,3)$ is said to be of type $(3$-$4)^k$. An entry of type $(3$-$4)^0$ is nothing more than a negligible entry.

All admissible sequences of abstract essential singularity diagrams with $t^1=4$ and $\mathcal{D}_1$ being of type $0$ (i.e. arising from an entry $(x_1,4)$) are as follows.

\begin{table}[H]
\begin{center}
\begin{tabular}{c}

 \begin{minipage}{0.2\hsize}
 $\Bigl($
 \begin{tabular}{c}
   \\ \cline{1-1}
 \multicolumn{1}{|c|}{$(x_1,4)$}\\ \cline{1-1}
 \multicolumn{1}{c}{}
 \end{tabular}
 $\Bigr)$
 \end{minipage}

\begin{minipage}{0.15\hsize}
 \begin{tabular}{c}
   \\ \cline{1-1}
 \multicolumn{1}{|c|}{$(x_2,4)$}\\ \cline{1-1}
 \multicolumn{1}{c}{\lower1ex\hbox{$\mathcal{D}_{x_1}^0$}}
 \end{tabular}
 \end{minipage}

\begin{minipage}{0.1\hsize}
 \begin{tabular}{c}
  $\cdots$
 \end{tabular}
 \end{minipage}

\begin{minipage}{0.15\hsize}
 \begin{tabular}{c}
   \\ \cline{1-1}
 \multicolumn{1}{|c|}{$(x_k,4)$}\\ \cline{1-1}
 \multicolumn{1}{c}{\lower1ex\hbox{$\mathcal{D}_{x_{k-1}}^0$}}
 \end{tabular}
 \end{minipage}

\begin{minipage}{0.25\hsize}
 \begin{tabular}{c}
 \\
 $(x_{k+1},3)$ is of type $(3$-$4)^l$\\
\multicolumn{1}{c}{\lower1ex\hbox{$\mathcal{D}_{x_{k}}^1$}}
 \end{tabular}
 \end{minipage}

\end{tabular}
\end{center}
\end{table}

\noindent Then, $(x_1,4)$ is said to be of type $4^k$-$(3$-$4)^l$. 

All admissible sequences of abstract essential singularity diagrams with $t^1=5$ and $\mathcal{D}_1$ is of type $1$ (i.e. arising from an entry $(x_1,5)$) such that no entries with multiplicity $6$ appear are the following $3$ cases.

\begin{table}[H]
\begin{center}
\begin{tabular}{c}

 \begin{minipage}{0.2\hsize}
 $\Bigl($
 \begin{tabular}{c}
   \\ \cline{1-1}
 \multicolumn{1}{|c|}{$(x_1,5)$}\\ \cline{1-1}
 \multicolumn{1}{c}{}
 \end{tabular}
 $\Bigr)$
 \end{minipage}

\begin{minipage}{0.15\hsize}
 \begin{tabular}{c}
   \\ \cline{1-1}
 \multicolumn{1}{|c|}{$(x_2,5)$}\\ \cline{1-1}
 \multicolumn{1}{c}{\lower1ex\hbox{$\mathcal{D}_{x_1}^1$}}
 \end{tabular}
 \end{minipage}

\begin{minipage}{0.1\hsize}
 \begin{tabular}{c}
  $\cdots$
 \end{tabular}
 \end{minipage}

\begin{minipage}{0.2\hsize}
 \begin{tabular}{c}
   \\ \cline{1-1}
 \multicolumn{1}{|c|}{$(x_k,5)$}\\ \cline{1-1}
 \multicolumn{1}{c}{\lower1ex\hbox{$\mathcal{D}_{x_{k-1}}^1$}}
 \end{tabular}
 \end{minipage}

\end{tabular}
\end{center}
\end{table}

\begin{table}[H]
\begin{center}
\begin{tabular}{c}

 \begin{minipage}{0.18\hsize}
 $\Bigl($
 \begin{tabular}{c}
   \\ \cline{1-1}
 \multicolumn{1}{|c|}{$(x_1,5)$}\\ \cline{1-1}
 \multicolumn{1}{c}{}
 \end{tabular}
 $\Bigr)$
 \end{minipage}

\begin{minipage}{0.11\hsize}
 \begin{tabular}{c}
   \\ \cline{1-1}
 \multicolumn{1}{|c|}{$(x_2,5)$}\\ \cline{1-1}
 \multicolumn{1}{c}{\lower1ex\hbox{$\mathcal{D}_{x_1}^1$}}
 \end{tabular}
 \end{minipage}

\begin{minipage}{0.07\hsize}
 \begin{tabular}{c}
  $\cdots$
 \end{tabular}
 \end{minipage}

\begin{minipage}{0.13\hsize}
 \begin{tabular}{c}
   \\ \cline{1-1}
 \multicolumn{1}{|c|}{$(x_k,5)$}\\ \cline{1-1}
 \multicolumn{1}{c}{\lower1ex\hbox{$\mathcal{D}_{x_{k-1}}^1$}}
 \end{tabular}
 \end{minipage}

\begin{minipage}{0.14\hsize}
 \begin{tabular}{c}
   \\ \cline{1-1}
 \multicolumn{1}{|c|}{$(x_{k+1},4)$}\\ \cline{1-1}
 \multicolumn{1}{c}{\lower1ex\hbox{$\mathcal{D}_{x_k}^1$}}
 \end{tabular}
 \end{minipage}

\begin{minipage}{0.3\hsize}
 \begin{tabular}{c}
 \\
 $(x_{k+2},3)$ is of type $(3$-$4)^{l}$\\
\multicolumn{1}{c}{\lower1ex\hbox{$\mathcal{D}_{x_{k+1}}^0$}}
 \end{tabular}
 \end{minipage}

\end{tabular}
\end{center}
\end{table}

\begin{table}[H]
\begin{center}
\begin{tabular}{c}

 \begin{minipage}{0.18\hsize}
 $\Bigl($
 \begin{tabular}{c}
   \\ \cline{1-1}
 \multicolumn{1}{|c|}{$(x_1,5)$}\\ \cline{1-1}
 \multicolumn{1}{c}{}
 \end{tabular}
 $\Bigr)$
 \end{minipage}

\begin{minipage}{0.13\hsize}
 \begin{tabular}{c}
   \\ \cline{1-1}
 \multicolumn{1}{|c|}{$(x_2,5)$}\\ \cline{1-1}
 \multicolumn{1}{c}{\lower1ex\hbox{$\mathcal{D}_{x_1}^1$}}
 \end{tabular}
 \end{minipage}

\begin{minipage}{0.08\hsize}
 \begin{tabular}{c}
  $\cdots$
 \end{tabular}
 \end{minipage}

\begin{minipage}{0.18\hsize}
 \begin{tabular}{c}
   \\ \cline{1-1}
 \multicolumn{1}{|c|}{$(x_k,5)$}\\ \cline{1-1}
 \multicolumn{1}{c}{\lower1ex\hbox{$\mathcal{D}_{x_{k-1}}^1$}}
 \end{tabular}
 \end{minipage}

\begin{minipage}{0.2\hsize}
 \begin{tabular}{c}
 \\ \cline{1-1}
 \multicolumn{1}{|c|}{$(x_{k+2},4)$}\\ \cline{1-1}
 \multicolumn{1}{|c|}{$(x_{k+1},3)$}\\ \cline{1-1}
 \multicolumn{1}{c}{\lower1ex\hbox{$\mathcal{D}_{x_k}^1$}}
 \end{tabular}
 \end{minipage}

\end{tabular}
\end{center}
\end{table}

\noindent Then, $(x_1,5)$ is said to be of types $5^k$, $5^k$-$4$-$(3$-$4)^l$ and  $5^k$-$34$, respectively

We remark no singular points with multiplicity $6$ appear over $\Gamma_p$ unless it is of type $({\rm I\hspace{-.1em}I\hspace{-.1em}I})$.
All admissible sequences of abstract essential singularity diagrams with  $\mathcal{D}_1$ the diagram on a fiber $\Gamma_p$ of type $({\rm I\hspace{-.1em}I\hspace{-.1em}I})$ are as follows.

\begin{table}[H]
\begin{center}
\begin{tabular}{c}

 \begin{minipage}{0.2\hsize}
 \begin{tabular}{c}
   \\ \cline{1-1}
 \multicolumn{1}{|c|}{$(x_2,6)$}\\ \cline{1-1}
 \multicolumn{1}{|c|}{$(x_1,5)$}\\ \cline{1-1}
 \multicolumn{1}{c}{\lower1ex\hbox{$\mathcal{D}^1$}}
 \end{tabular}
 \end{minipage}

\begin{minipage}{0.18\hsize}
 \begin{tabular}{c}
   \\ \cline{1-1}
 \multicolumn{1}{|c|}{$(x_2,6)$}\\ \cline{1-1}
 \multicolumn{1}{c}{\lower1ex\hbox{$\mathcal{D}_{x_1}^1$}}
 \end{tabular}
 \end{minipage}

\begin{minipage}{0.4\hsize}
 \begin{tabular}{c}
   \\ 
 $(x_{3},4)$ or $(x_3,3)$ is of type $4^k$-$(3$-$4)^l$\\
 \multicolumn{1}{c}{\lower1ex\hbox{$\mathcal{D}_{x_2}^0$}}
 \end{tabular}
 \end{minipage}

\end{tabular}
\end{center}
\end{table}

\noindent (iii) Let $\Gamma_p$ be a fiber of type $({\rm I})$. 
First, if there exist singularities with multiplicity $5$, we blow up these singularities until there exist no singularities with multiplicity $5$ on the total transform of $\Gamma_p$. 
Let $i$ be the number of singularities of multiplicity $5$ (i.e.\ the number of these blow-ups). Then the dual graph of the total transform of $\Gamma_p$ is the Dynkin graph of type $A_{i+1}$, and all possible essential singularities on both ends of the chain are of types $4$-$(3$-$4)^{j}$ or $34$. 
We say $\Gamma_p$ is of type $({\rm I_{i,j,k}})$ more precisely if there exists a singularity of type $4$-$(3$-$4)^{j-1}$ on one end and there exists a singularity of type $4$-$(3$-$4)^{k-1}$ on another end, where $j$ or $k=0$ means there exists no essential singularity on the end. 
When there exists a singularity of type $34$ on one end, 
we denote $j$ or $k=\infty$.

Let $\Gamma_p$ be a fiber of type $({\rm I\hspace{-.1em}I})$. 
Similarly, if there exist singularities with multiplicity $4$, we blow up these singularities until there exist no singularities with multiplicity $4$ on the total transform of $\Gamma_p$. 
Let $i$ be the number of these blow-ups. Then the dual graph of the total transform of $\Gamma_p$ is also the Dynkin graph of type $A_{i+1}$, and all possible essential singularities on both ends of the chain are of type $(3$-$4)^{j}$. 
We say $\Gamma_p$ is of type $({\rm I\hspace{-.1em}I_{i,j,k}})$ if there exists a singularity of type $(3$-$4)^{j}$ on one end and there exists a singularity of type $(3$-$4)^{k}$ on another end.

Let $\Gamma_p$ be a fiber of type $({\rm I\hspace{-.1em}I\hspace{-.1em}I})$ (resp.\ a fiber of type $({\rm I\hspace{-.1em}V})$). 
We say $\Gamma_p$ is of type $({\rm I\hspace{-.1em}I\hspace{-.1em}I_{i,j}})$ (resp.\ $({\rm I\hspace{-.1em}V_{i,j}})$) if there exists a singularity of type $4^i$-$(3$-$4)^j$ on the exceptional curve for the blow-up at the singularity with multiplicity $6$ infinitely near to the singularity with multiplicity $5$ on $\Gamma_p$ (resp.\ the singularity with multiplicity $4$ infinitely near to the singularity with multiplicity $4$ on $\Gamma_p$). 

Let $\Gamma_p$ be a fiber of type $({\rm V})$ (resp.\ $({\rm V\hspace{-.1em}I})$). We say $\Gamma_p$ is of type $({\rm V_{j}})$ (resp.\ $({\rm V\hspace{-.1em}I_{j}})$) if there exists a singularity of type $(3$-$4)^j$ on the exceptional curve for the blow-up at the singularity with multiplicity $4$ infinitely near to the singularity with multiplicity $4$ on $\Gamma_p$ (resp.\ the singularity with multiplicity $4$ infinitely near to the singularity with multiplicity $3$ on $\Gamma_p$).

Let $\Gamma_p$ be a fiber of type $({\rm V\hspace{-.1em}I\hspace{-.1em}I})$ (resp.\ $({\rm V\hspace{-.1em}I\hspace{-.1em}I\hspace{-.1em}I})$). 
We say $\Gamma_p$ is of type $({\rm V\hspace{-.1em}I\hspace{-.1em}I_{j}})$ (resp.\ $({\rm V\hspace{-.1em}I\hspace{-.1em}I\hspace{-.1em}I_{j}})$) if there exists a singularity of type $4$-$(3$-$4)^{j-1}$ on the exceptional curve for the blow-up at the singularity with multiplicity $5$ infinitely near to the singularity with multiplicity $5$ on $\Gamma_p$ (resp.\ the singularity of multiplicity $3$ infinitely near to the singularity with multiplicity $4$ on $\Gamma_p$), where $j=0$ means there exists no essential singularity on the exceptional curve.

Now, the following lemma is straightforward.

\begin{lem}
Let $\widehat{\psi}\colon \widehat{W}\rightarrow W$ be the resolution of essential singularities and $\widehat{R}$ the branch locus on $\widehat{W}$. The total transform on $\widehat{W}$ of each fiber $\Gamma$ of types $({\rm I})$,\ldots, $({\rm X\hspace{-.1em}I})$ has the following configuration.

\setlength\unitlength{0.2cm}
\begin{figure}[H]
\begin{center}
\begin{tabular}{c}

 \begin{minipage}{0.6\hsize}
 \begin{center}
\begin{picture}(0,5)
 \multiput(-27,0.55)(4,0){16}{\line(1,0){3}}
 \multiput(-18,0.55)(0.3,0){20}{\line(1,0){0.15}}
 \multiput(2,0.55)(0.3,0){20}{\line(1,0){0.15}}
 \multiput(22,0.55)(0.3,0){20}{\line(1,0){0.15}}
 \put(-40,0){$({\rm I_{i,j,k}})$}
  \put(-28,1.5){{\tiny $j$}}
 \put(-28,0){$x$}
 \put(-24,0){$\bullet$}
  \put(-20,1.5){{\tiny $j-1$}}
 \put(-20,0){$\circ$}
 \put(-16,0){}
 \put(-12,0){$\bullet$}
  \put(-8,1.5){{\tiny $1$}}
 \put(-8,0){$\circ$}
  \put(-4,1.5){{\tiny $1$}}
 \put(-4,0){$\omega$}
  \put(0,1.5){{\tiny $2$}}
 \put(0,0){$\bullet$}
 \put(4,0){}
  \put(8,1.5){{\tiny $i$}}
 \put(8,0){$\bullet$}
  \put(12,1.5){{\tiny $i+1$}}
 \put(12,0){$\omega$}
  \put(16,1.5){{\tiny $1$}}
 \put(16,0){$\circ$}
 \put(20,0){$\bullet$}
 \put(24,0){}
  \put(28,1.5){{\tiny $k-1$}}
 \put(28,0){$\circ$}
 \put(32,0){$\bullet$}
  \put(36,1.5){{\tiny $k$}}
 \put(36,0){$x$}
 \end{picture}
 \end{center}
 \end{minipage}

\end{tabular}
\end{center}
\end{figure}

\setlength\unitlength{0.2cm}
\begin{figure}[H]
\begin{center}
\begin{tabular}{c}

 \begin{minipage}{0.6\hsize}
 \begin{center}
\begin{picture}(0,5)
 \multiput(-23,0.55)(4,0){5}{\line(1,0){3}}
 \put(-40,0){$($when $j=\infty$$)$}
 \put(-24,0){$\bullet$}
 \put(-20,0){$x$}
 \put(-16,0){$\bullet^{\prime}$}
  \put(-12,1.5){{\tiny $1$}}
 \put(-12,0){$\omega$}
  \put(-8,1.5){{\tiny $2$}}
 \put(-8,0){$\bullet$}
 \put(0,0){$\cdots$}
 
 \end{picture}
 \end{center}
 \end{minipage}

\end{tabular}
\end{center}
\end{figure}

\setlength\unitlength{0.2cm}
\begin{figure}[H]
\begin{center}
\begin{tabular}{c}

 \begin{minipage}{0.6\hsize}
 \begin{center}
\begin{picture}(0,5)
 \multiput(-27,0.55)(4,0){16}{\line(1,0){3}}
 \multiput(-18,0.55)(0.3,0){20}{\line(1,0){0.15}}
 \multiput(2,0.55)(0.3,0){20}{\line(1,0){0.15}}
 \multiput(22,0.55)(0.3,0){20}{\line(1,0){0.15}}
 \put(-40,0){$({\rm I\hspace{-.1em}I_{i,j,k}})$}
  \put(-28,1.5){{\tiny $j$}}
 \put(-28,0){$x$}
 \put(-24,0){$\bullet$}
  \put(-20,1.5){{\tiny $j-1$}}
 \put(-20,0){$\circ$}
 \put(-16,0){}
 \put(-12,0){$\bullet$}
  \put(-8,1.5){{\tiny $1$}}
 \put(-8,0){$\circ$}
  \put(-4,1.5){{\tiny $1$}}
 \put(-4,0){$z$}
  \put(0,1.5){{\tiny $2$}}
 \put(0,0){$\circ$}
 \put(4,0){}
  \put(8,1.5){{\tiny $i$}}
 \put(8,0){$\circ$}
  \put(12,1.5){{\tiny $i+1$}}
 \put(12,0){$z$}
  \put(16,1.5){{\tiny $1$}}
 \put(16,0){$\circ$}
 \put(20,0){$\bullet$}
 \put(24,0){}
  \put(28,1.5){{\tiny $k-1$}}
 \put(28,0){$\circ$}
 \put(32,0){$\bullet$}
  \put(36,1.5){{\tiny $k$}}
 \put(36,0){$x$}
 \end{picture}
 \end{center}
 \end{minipage}

\end{tabular}
\end{center}
\end{figure}

\setlength\unitlength{0.2cm}
\begin{figure}[H]
\begin{center}
\begin{tabular}{c}

 \begin{minipage}{0.6\hsize}
 \begin{center}
\begin{picture}(0,5)
 \multiput(-23,0.55)(4,0){10}{\line(1,0){3}}
 \multiput(-19,0.55)(0.3,0){20}{\line(1,0){0.15}}
 \multiput(1,0.55)(0.3,0){20}{\line(1,0){0.15}}
 \qbezier(-23.8,0.6)(-23.8,0.6)(-27.1,2.8)
 \qbezier(-23.8,0.5)(-23.8,0.5)(-27.1,-1.7)
 \put(-40,0){$({\rm I\hspace{-.1em}I\hspace{-.1em}I_{i,j}})$}
 \put(-28,-2.5){$\bullet$}
 \put(-28,2.5){$\bullet$}
  \put(-24,1.5){{\tiny $1$}}
 \put(-24,0){$\circ$}
  \put(-20,1.5){{\tiny $2$}}
 \put(-20,0){$\circ$}
 \put(-16,0){}
  \put(-12,1.5){{\tiny $i$}}
 \put(-12,0){$\circ$}
  \put(-8,1.5){{\tiny $i+1$}}
 \put(-8,0){$z$}
  \put(-4,1.5){{\tiny $1$}}
 \put(-4,0){$\circ$}
 \put(0,0){$\bullet$}
 \put(4,0){}
  \put(8,1.5){{\tiny $j-1$}}
 \put(8,0){$\circ$}
 \put(12,0){$\bullet$}
  \put(16,1.5){{\tiny $j$}}
 \put(16,0){$x$}
 \end{picture}
 \end{center}
 \end{minipage}

\end{tabular}
\end{center}
\end{figure}

\setlength\unitlength{0.2cm}
\begin{figure}[H]
\begin{center}
\begin{tabular}{c}

 \begin{minipage}{0.6\hsize}
 \begin{center}
\begin{picture}(0,5)
 \multiput(-23,0.55)(4,0){10}{\line(1,0){3}}
 \multiput(-19,0.55)(0.3,0){20}{\line(1,0){0.15}}
 \multiput(1,0.55)(0.3,0){20}{\line(1,0){0.15}}
 \qbezier(-23.8,0.6)(-23.8,0.6)(-27.1,2.8)
 \qbezier(-23.8,0.5)(-23.8,0.5)(-27.1,-1.7)
 \put(-40,0){$({\rm I\hspace{-.1em}V_{i,j}})$}
 \put(-28,-2.5){$\circ$}
 \put(-28,2.5){$\circ$}
  \put(-24,1.5){{\tiny $1$}}
 \put(-24,0){$\circ$}
  \put(-20,1.5){{\tiny $2$}}
 \put(-20,0){$\circ$}
 \put(-16,0){}
  \put(-12,1.5){{\tiny $i$}}
 \put(-12,0){$\circ$}
  \put(-8,1.5){{\tiny $i+1$}}
 \put(-8,0){$z$}
  \put(-4,1.5){{\tiny $1$}}
 \put(-4,0){$\circ$}
 \put(0,0){$\bullet$}
 \put(4,0){}
  \put(8,1.5){{\tiny $j-1$}}
 \put(8,0){$\circ$}
 \put(12,0){$\bullet$}
  \put(16,1.5){{\tiny $j$}}
 \put(16,0){$x$}
 \end{picture}
 \end{center}
 \end{minipage}

\end{tabular}
\end{center}
\end{figure}

\setlength\unitlength{0.2cm}
\begin{figure}[H]
\begin{center}
\begin{tabular}{c}

 \begin{minipage}{0.6\hsize}
 \begin{center}
\begin{picture}(0,5)
 \multiput(-23,0.55)(4,0){7}{\line(1,0){3}}
 \multiput(-11,0.55)(0.3,0){20}{\line(1,0){0.15}}
 \qbezier(-23.8,0.6)(-23.8,0.6)(-27.1,2.8)
 \qbezier(-23.8,0.5)(-23.8,0.5)(-27.1,-1.7)
 \put(-40,0){$({\rm V_{j}})$}
 \put(-28,-2.5){$\omega$}
 \put(-28,2.5){$\circ$}
 \put(-24,0){$\circ$}
 \put(-20,0){$\bullet$}
  \put(-16,1.5){{\tiny $1$}}
 \put(-16,0){$\circ$}
 \put(-12,0){$\bullet$}
 \put(-8,0){}
  \put(-4,1.5){{\tiny $j-1$}}
 \put(-4,0){$\circ$}
 \put(0,0){$\bullet$}
  \put(4,1.5){{\tiny $j$}}
 \put(4,0){$x$}
 \end{picture}
 \end{center}
 \end{minipage}

\end{tabular}
\end{center}
\end{figure}

\setlength\unitlength{0.2cm}
\begin{figure}[H]
\begin{center}
\begin{tabular}{c}

 \begin{minipage}{0.6\hsize}
 \begin{center}
\begin{picture}(0,5)
 \multiput(-23,0.55)(4,0){7}{\line(1,0){3}}
 \multiput(-11,0.55)(0.3,0){20}{\line(1,0){0.15}}
 \qbezier(-23.8,0.6)(-23.8,0.6)(-27.1,2.8)
 \qbezier(-23.8,0.5)(-23.8,0.5)(-27.1,-1.7)
 \put(-40,0){$({\rm V\hspace{-.1em}I_{j}})$}
 \put(-28,-2.5){$z$}
 \put(-28,2.5){$\bullet$}
 \put(-24,0){$\circ$}
 \put(-20,0){$\bullet$}
  \put(-16,1.5){{\tiny $1$}}
 \put(-16,0){$\circ$}
 \put(-12,0){$\bullet$}
 \put(-8,0){}
  \put(-4,1.5){{\tiny $j-1$}}
 \put(-4,0){$\circ$}
 \put(0,0){$\bullet$}
  \put(4,1.5){{\tiny $j$}}
 \put(4,0){$x$}
 \end{picture}
 \end{center}
 \end{minipage}

\end{tabular}
\end{center}
\end{figure}

\setlength\unitlength{0.2cm}
\begin{figure}[H]
\begin{center}
\begin{tabular}{c}

 \begin{minipage}{0.6\hsize}
 \begin{center}
\begin{picture}(0,5)
 \multiput(-23,0.55)(4,0){6}{\line(1,0){3}}
 \multiput(-15,0.55)(0.3,0){20}{\line(1,0){0.15}}
 \qbezier(-23.8,0.6)(-23.8,0.6)(-27.1,2.8)
 \qbezier(-23.8,0.5)(-23.8,0.5)(-27.1,-1.7)
 \put(-40,0){$({\rm V\hspace{-.1em}I\hspace{-.1em}I_{j}})$}
 \put(-28,-2.5){$\omega$}
 \put(-28,2.5){$\omega$}
 \put(-24,0){$\bullet$}
  \put(-20,1.5){{\tiny $1$}}
 \put(-20,0){$\circ$}
 \put(-16,0){$\bullet$}
 \put(-12,0){}
  \put(-8,1.5){{\tiny $j-1$}}
 \put(-8,0){$\circ$}
 \put(-4,0){$\bullet$}
  \put(0,1.5){{\tiny $j$}}
 \put(0,0){$x$}
 \end{picture}
 \end{center}
 \end{minipage}

\end{tabular}
\end{center}
\end{figure}

\setlength\unitlength{0.2cm}
\begin{figure}[H]
\begin{center}
\begin{tabular}{c}

 \begin{minipage}{0.6\hsize}
 \begin{center}
\begin{picture}(0,5)
 \multiput(-23,0.55)(4,0){6}{\line(1,0){3}}
 \multiput(-15,0.55)(0.3,0){20}{\line(1,0){0.15}}
 \qbezier(-23.8,0.6)(-23.8,0.6)(-27.1,2.8)
 \qbezier(-23.8,0.5)(-23.8,0.5)(-27.1,-1.7)
 \put(-40,0){$({\rm V\hspace{-.1em}I\hspace{-.1em}I\hspace{-.1em}I_{j}})$}
 \put(-28,-2.5){$z$}
 \put(-28,2.5){$z$}
 \put(-24,0){$\bullet$}
  \put(-20,1.5){{\tiny $1$}}
 \put(-20,0){$\circ$}
 \put(-16,0){$\bullet$}
 \put(-12,0){}
  \put(-8,1.5){{\tiny $j-1$}}
 \put(-8,0){$\circ$}
 \put(-4,0){$\bullet$}
  \put(0,1.5){{\tiny $j$}}
 \put(0,0){$x$}
 \end{picture}
 \end{center}
 \end{minipage}

\end{tabular}
\end{center}
\end{figure}

\setlength\unitlength{0.2cm}
\begin{figure}[H]
\begin{center}
\begin{tabular}{c}

 \begin{minipage}{0.6\hsize}
 \begin{center}
\begin{picture}(0,5)
 \multiput(-31,0.55)(4,0){2}{\line(1,0){3}}
 \put(-40,0){$({\rm I\hspace{-.1em}X})$}
 \put(-32,0){$\omega$}
 \put(-28,0){$x$}
 \put(-24,0){$\omega$}

 \multiput(-7,0.55)(4,0){3}{\line(1,0){3}}
 \put(-16,0){$({\rm X})$}
 \put(-8,0){$\omega^{\prime}$}
 \put(-4,0){$x$}
 \put(0,0){$\bullet$}
 \put(4,0){$z$}

 \multiput(21,0.55)(4,0){4}{\line(1,0){3}}
 \put(12,0){$({\rm X\hspace{-.1em}I})$}
 \put(20,0){$\bullet^{\prime\prime}$}
 \put(24,0){$x$}
 \put(28,0){$\bullet$}
 \put(32,0){$\circ$}
 \put(36,0){$\bullet$}
 \end{picture}
 \end{center}
 \end{minipage}

\end{tabular}
\end{center}
\end{figure}
\noindent where the symbol $x$ is a $(-1)$-curve, the symbol $\bullet$ is a $(-2)$-curve which is contained in $\widehat{R}$ and is disjoint from other components of $\widehat{R}$, the symbol $\circ$ is a $(-2)$-curve disjoint from $\widehat{R}$, the symbol $\omega$ is a $(-2)$-curve which is contained in $\widehat{R}$ and intersects with other components of $\widehat{R}$, the symbol $z$ is a $(-2)$-curve which is not contained in $\widehat{R}$ and intersects with $\widehat{R}$, the symbol $\bullet^{\prime}$ is a $(-3)$-curve which is contained in $\widehat{R}$ and is disjoint from other components of $\widehat{R}$, the symbol $\omega^{\prime}$ is a $(-3)$-curve which is contained in $\widehat{R}$ and intersects with other components of $\widehat{R}$ and the symbol $\bullet^{\prime\prime}$ is a $(-4)$-curve contained in $\widehat{R}$ which is contained in $\widehat{R}$ and is disjoint from other components of $\widehat{R}$. 
\end{lem}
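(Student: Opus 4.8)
The plan is to verify the asserted configurations one fiber type at a time, exploiting the fact that for each of the types $(\mathrm{I}),\ldots,(\mathrm{XI})$ the sequence of essential blow-ups composing the even resolution $\widehat{\psi}\colon\widehat{W}\to W$ has already been pinned down in the classification of admissible sequences of abstract essential singularity diagrams carried out just above. Once the centres $x_{i,j}$ and the multiplicities $m_{i,j}$ of the residual branch locus at each (infinitely near) centre are read off the relevant diagram, the reduced total transform $\widehat{\psi}^{*}\Gamma$ is controlled by three elementary pieces of data: its dual graph, the self-intersection of each component, and, for each component, whether it is contained in $\widehat{R}$ and whether it meets the remaining components of $\widehat{R}$. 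I would organise the proof around these three items and then simply run each type through.

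First I would record the dual graph together with the self-intersection numbers. Writing $\widehat{\psi}=\psi_1\circ\cdots\circ\psi_s$, the proper transform of $\Gamma$ begins with self-intersection $0$, each exceptional curve is born with self-intersection $-1$, and every later blow-up whose centre lies on a given component lowers that component's self-intersection by one. Whether a subsequent centre lies on the proper transform of $\Gamma$ or only on the preceding exceptional curve is exactly the tangency dichotomy (ii.1)--(ii.3) of \S2, which is encoded in the diagram by the values $m_{i,j}$ and by the forking relation $\mathcal{D}_\mu\overset{i}{\rightsquigarrow}\mathcal{D}_\nu$ of Lemma~\ref{connlem}. A linear string of infinitely near centres sitting on the successive exceptional curves then produces precisely a chain of $(-2)$-curves, giving the $A_{i+1}$ dual graph with $i$ equal to the number of multiplicity-$5$ (type $(\mathrm{I})$) resp.\ multiplicity-$4$ (type $(\mathrm{II})$) centres, the tip curves over each end never being blown up again and remaining the $(-1)$-curves denoted $x$. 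The exceptional $(-3)$- and $(-4)$-curves $\bullet^{\prime}$, $\omega^{\prime}$, $\bullet^{\prime\prime}$ arise only in the types $(\mathrm{III})$, $(\mathrm{IX})$, $(\mathrm{X})$, $(\mathrm{XI})$, where the column of the diagram forces two or three successive centres onto one and the same curve; their self-intersections drop to $-3$ or $-4$ by the very same count.

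Next I would settle membership in $\widehat{R}$ and the intersection pattern. Since $n=2$, Lemma~\ref{multlem}(1) tells us that the exceptional curve over a centre of multiplicity $m$ lies in the branch locus if and only if $m$ is odd; hence the even-multiplicity ($4$ and $6$) centres give curves not contained in $\widehat{R}$ (the symbols $\circ$, $z$), while the odd-multiplicity centres, together with the transform of $\Gamma$ itself when $\Gamma\subset R$, give curves contained in $\widehat{R}$ (the symbols $\bullet$, $\omega$, $\bullet^{\prime}$, $\omega^{\prime}$, $\bullet^{\prime\prime}$). To separate $\bullet$ from $\omega$ and $\circ$ from $z$ --- that is, to decide whether a component meets the \emph{other} components of $\widehat{R}$ --- I would apply Lemma~\ref{multlem}(2) together with the residual-branch data $s_{i,k}$: such a component meets $\widehat{R}$ off the chain exactly when a residual analytic branch of $R$ survives through a point of the exceptional curve away from its intersection with the transform of the previous curve, which is precisely the information carried by the bottom-row entries produced under forking in Lemma~\ref{connlem}(1)(a)--(d). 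Threading this through each type reproduces the placement of $\bullet$, $\omega$, $\circ$, $z$ exactly as drawn.

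Every case is then a finite, elementary computation, which is why the lemma is stated as straightforward. The step I expect to be the main obstacle is the last one: the consistent determination of which in-$\widehat{R}$ curves meet further components of $\widehat{R}$ (the $\bullet$/$\omega$ and $\circ$/$z$ dichotomy) and the correct reading of the non-generic tip configurations that yield the $(-3)$- and $(-4)$-curves, including the degenerate $j=\infty$ end. Both reduce to keeping track of the local branches of the residual branch locus on each exceptional curve, and for this the singularity-diagram formalism of \S2 and the forking Lemma~\ref{connlem} supply the complete bookkeeping, so no genuinely new idea is required beyond careful case analysis.
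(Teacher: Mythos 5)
Your proposal is correct and matches the paper's intent: the paper offers no written proof beyond declaring the lemma ``straightforward,'' the understood argument being exactly the case-by-case read-off from the classified admissible sequences that you describe, with membership in $\widehat{R}$ governed by the parity of the multiplicity (Lemma~\ref{multlem}(1) with $n=2$), self-intersections tracked through the successive blow-ups, and the $\bullet/\omega$ and $\circ/z$ dichotomy settled by the virtual local branch data via Lemma~\ref{connlem}. Your sketch correctly identifies all the needed bookkeeping, so there is nothing to add beyond carrying out the finite verification for each of the twelve types.
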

Taking double covering, we get the following list:
\begin{thm}
Each fiber $F$ of $f$ of types $({\rm I})$,\ldots, $({\rm X\hspace{-.1em}I})$ has the following configuration.

\setlength\unitlength{0.2cm}
\begin{figure}[H]
\begin{center}
\begin{tabular}{c}

 \begin{minipage}{0.6\hsize}
 \begin{center}
\begin{picture}(0,6)
 \multiput(-27,0.55)(4,0){15}{\line(1,0){3}}
 \multiput(-22,0.55)(0.3,0){20}{\line(1,0){0.15}}
 \multiput(22,0.55)(0.3,0){20}{\line(1,0){0.15}}
 \put(-40,0){$({\rm I_{1,j,k}})$}
 \put(-28,0){$e$}
  \put(-24,1.5){{\tiny $j-1$}}
 \put(-24,0){$\circ$}
 \put(-20,0){}
  \put(-16,1.5){{\tiny $2$}}
 \put(-16,0){$\circ$}
  \put(-12,1.5){{\tiny $1$}}
 \put(-12,0){$\circ$}
 \put(-8,0){$\bullet$}
 \put(0,4){$\circ$}
 \put(0.5,1){\line(0,1){3.2}}
 \put(-4,0){$\circ$}
 \put(0,0){$\bullet$}
 \put(4,0){$\circ$}
 \put(8,0){$\bullet$}
 \put(12,0){$\circ$}
  \put(12,1.5){{\tiny $1$}}
 \put(16,0){$\circ$}
  \put(16,1.5){{\tiny $2$}}
 \put(20,0){$\circ$}
 \put(24,0){}
  \put(28,1.5){{\tiny $k-1$}}
 \put(28,0){$\circ$}
 \put(32,0){$e$}
 \end{picture}
 \end{center}
 \end{minipage}

\end{tabular}
\end{center}
\end{figure}

\setlength\unitlength{0.2cm}
\begin{figure}[H]
\begin{center}
\begin{tabular}{c}

 \begin{minipage}{0.6\hsize}
 \begin{center}
\begin{picture}(0,6)
 \multiput(-27,0.55)(4,0){13}{\line(1,0){3}}
 \multiput(-22,0.55)(0.3,0){20}{\line(1,0){0.15}}
 \multiput(14,0.55)(0.3,0){20}{\line(1,0){0.15}}
 \put(-40,0){or}
 \put(-28,0){$e$}
  \put(-24,1.5){{\tiny $j-1$}}
 \put(-24,0){$\circ$}
 \put(-20,0){}
  \put(-16,1.5){{\tiny $2$}}
 \put(-16,0){$\circ$}
  \put(-12,1.5){{\tiny $1$}}
 \put(-12,0){$\circ$}
 \put(-8,0){$\bullet$}
 \put(-8,-4){$\circ$}
 \put(-7.5,-3){\line(0,1){3.2}}
 \put(-4,0){$\circ$}
 \put(0,0){$\bullet$}
 \put(0,-4){$\circ$}
 \put(0.5,-3){\line(0,1){3.2}}
 \put(4,0){$\circ$}
  \put(4,1.5){{\tiny $1$}}
 \put(8,0){$\circ$}
  \put(8,1.5){{\tiny $2$}}
 \put(12,0){$\circ$}
 \put(16,0){}
  \put(20,1.5){{\tiny $k-1$}}
 \put(20,0){$\circ$}
 \put(24,0){$e$}
 \end{picture}
 \end{center}
 \end{minipage}

\end{tabular}
\end{center}
\end{figure}

\setlength\unitlength{0.2cm}
\begin{figure}[H]
\begin{center}
\begin{tabular}{c}

 \begin{minipage}{0.6\hsize}
 \begin{center}
\begin{picture}(0,6)
 \multiput(-27,0.55)(4,0){16}{\line(1,0){3}}
 \multiput(-22,0.55)(0.3,0){20}{\line(1,0){0.15}}
 \multiput(2,0.55)(0.3,0){20}{\line(1,0){0.15}}
 \multiput(26,0.55)(0.3,0){20}{\line(1,0){0.15}}
 \put(-40,0){$({\rm I_{i,j,k}})$}
 \put(-28,0){$e$}
  \put(-24,1.5){{\tiny $j-1$}}
 \put(-24,0){$\circ$}
 \put(-20,0){}
  \put(-16,1.5){{\tiny $2$}}
 \put(-16,0){$\circ$}
  \put(-12,1.5){{\tiny $1$}}
 \put(-12,0){$\circ$}
  \put(-8,1.5){{\tiny $1$}}
 \put(-8,0){$\bullet$}
 \put(-8,-4){$\circ$}
 \put(-7.5,-3){\line(0,1){3.2}}
 \put(-4,0){$\circ$}
  \put(0,1.5){{\tiny $2$}}
 \put(0,0){$\bullet$}
 \put(4,0){}
  \put(8,1.5){{\tiny $i$}}
 \put(8,0){$\bullet$}
 \put(12,0){$\circ$}
  \put(16,1.5){{\tiny $i+1$}}
 \put(16,0){$\bullet$}
 \put(16,-4){$\circ$}
 \put(16.5,-3){\line(0,1){3.2}}
  \put(20,1.5){{\tiny $1$}}
 \put(20,0){$\circ$}
  \put(24,1.5){{\tiny $2$}}
 \put(24,0){$\circ$}
 \put(28,0){}
  \put(32,1.5){{\tiny $k-1$}}
 \put(32,0){$\circ$}
 \put(36,0){$e$}
 \end{picture}
 \end{center}
 \end{minipage}

\end{tabular}
\end{center}
\end{figure}

\setlength\unitlength{0.2cm}
\begin{figure}[H]
\begin{center}
\begin{tabular}{c}

 \begin{minipage}{0.6\hsize}
 \begin{center}
\begin{picture}(0,6)
 \multiput(-23,0.55)(4,0){4}{\line(1,0){3}}
 \put(-40,0){$($when $j=\infty$$)$}
 \put(-24,0){$e$}
  \put(-20,1.5){{\tiny $1$}}
 \put(-20,0){$\bullet$}
 \put(-16,0){$\circ$}
  \put(-12,1.5){{\tiny $2$}}
 \put(-12,0){$\bullet$}
 \put(-4,0){$\cdots$}
 
 \end{picture}
 \end{center}
 \end{minipage}

\end{tabular}
\end{center}
\end{figure}

\setlength\unitlength{0.2cm}
\begin{figure}[H]
\begin{center}
\begin{tabular}{c}

 \begin{minipage}{0.6\hsize}
 \begin{center}
\begin{picture}(0,6)
 \multiput(-27,0.55)(4,0){5}{\line(1,0){3}}
 \multiput(17,0.55)(4,0){5}{\line(1,0){3}}
 \multiput(-3,3.05)(4,0){4}{\line(1,0){3}}
 \multiput(-3,-1.95)(4,0){4}{\line(1,0){3}}
 \multiput(-22,0.55)(0.3,0){20}{\line(1,0){0.15}}
 \multiput(2,3.05)(0.3,0){20}{\line(1,0){0.15}}
 \multiput(2,-1.95)(0.3,0){20}{\line(1,0){0.15}}
 \multiput(26,0.55)(0.3,0){20}{\line(1,0){0.15}}
 \qbezier(16.2,0.6)(16.2,0.6)(12.9,2.8)
 \qbezier(16.2,0.5)(16.2,0.5)(12.9,-1.7)
 \qbezier(-7.1,0.6)(-7.1,0.6)(-3.8,2.8)
 \qbezier(-7.1,0.5)(-7.1,0.5)(-3.8,-1.7)
 \put(-40,0){$({\rm I\hspace{-.1em}I_{i,j,k}})$}
 \put(-28,0){$e$}
  \put(-24,1.5){{\tiny $j-1$}}
 \put(-24,0){$\circ$}
 \put(-20,0){}
  \put(-16,1.5){{\tiny $2$}}
 \put(-16,0){$\circ$}
  \put(-12,1.5){{\tiny $1$}}
 \put(-12,0){$\circ$}

  \put(-8,1.5){{\tiny $1$}}
 \put(-8,0){$\circ^{\prime}$}
  \put(-4,4){{\tiny $2$}}
 \put(-4,2.5){$\circ$}
  \put(0,4){{\tiny $3$}}
 \put(0,2.5){$\circ$}
 \put(4,2.5){}
  \put(8,4){{\tiny $i-1$}}
 \put(8,2.5){$\circ$}
  \put(12,4){{\tiny $i$}}
 \put(12,2.5){$\circ$}
  \put(16,1.5){{\tiny $i+1$}}
 \put(16,0){$\circ^{\prime}$}

  \put(-4,-1){{\tiny $2$}}
 \put(-4,-2.5){$\circ$}
  \put(0,-1){{\tiny $3$}}
 \put(0,-2.5){$\circ$}
 \put(4,-2.5){}
  \put(8,-1){{\tiny $i-1$}}
 \put(8,-2.5){$\circ$}
  \put(12,-1){{\tiny $i$}}
 \put(12,-2.5){$\circ$}

  \put(20,1.5){{\tiny $1$}}
 \put(20,0){$\circ$}
  \put(24,1.5){{\tiny $2$}}
 \put(24,0){$\circ$}
 \put(28,0){}
  \put(32,1.5){{\tiny $k-1$}}
 \put(32,0){$\circ$}
 \put(36,0){$e$}
 \end{picture}
 \end{center}
 \end{minipage}

\end{tabular}
\end{center}
\end{figure}

\setlength\unitlength{0.2cm}
\begin{figure}[H]
\begin{center}
\begin{tabular}{c}

 \begin{minipage}{0.6\hsize}
 \begin{center}
\begin{picture}(0,6)
 \multiput(1,0.55)(4,0){5}{\line(1,0){3}}
 \multiput(-19,3.05)(4,0){4}{\line(1,0){3}}
 \multiput(-19,-1.95)(4,0){4}{\line(1,0){3}}
 \multiput(-14,3.05)(0.3,0){20}{\line(1,0){0.15}}
 \multiput(-14,-1.95)(0.3,0){20}{\line(1,0){0.15}}
 \multiput(10,0.55)(0.3,0){20}{\line(1,0){0.15}}
 \qbezier(0.2,0.6)(0.2,0.6)(-3.1,2.8)
 \qbezier(0.2,0.5)(0.2,0.5)(-3.1,-1.7)
 \qbezier(-23.1,0.6)(-23.1,0.6)(-19.8,2.8)
 \qbezier(-23.1,0.5)(-23.1,0.5)(-19.8,-1.7)
 \put(-40,0){$({\rm I\hspace{-.1em}I\hspace{-.1em}I_{i,j}})$}

  \put(-24,1.5){{\tiny $1$}}
 \put(-24,0){$\circ$}
  \put(-20,4){{\tiny $2$}}
 \put(-20,2.5){$\circ$}
  \put(-16,4){{\tiny $3$}}
 \put(-16,2.5){$\circ$}
 \put(-12,2.5){}
  \put(-8,4){{\tiny $i-1$}}
 \put(-8,2.5){$\circ$}
  \put(-4,4){{\tiny $i$}}
 \put(-4,2.5){$\circ$}
  \put(0,1.5){{\tiny $i+1$}}
 \put(0,0){$\circ^{\prime}$}

  \put(-20,-1){{\tiny $2$}}
 \put(-20,-2.5){$\circ$}
  \put(-16,-1){{\tiny $3$}}
 \put(-16,-2.5){$\circ$}
 \put(-12,-2.5){}
  \put(-8,-1){{\tiny $i-1$}}
 \put(-8,-2.5){$\circ$}
  \put(-4,-1){{\tiny $i$}}
 \put(-4,-2.5){$\circ$}

  \put(4,1.5){{\tiny $1$}}
 \put(4,0){$\circ$}
  \put(8,1.5){{\tiny $2$}}
 \put(8,0){$\circ$}
 \put(12,0){}
  \put(16,1.5){{\tiny $j-1$}}
 \put(16,0){$\circ$}
 \put(20,0){$e$}
 \end{picture}
 \end{center}
 \end{minipage}

\end{tabular}
\end{center}
\end{figure}

\setlength\unitlength{0.2cm}
\begin{figure}[H]
\begin{center}
\begin{tabular}{c}

 \begin{minipage}{0.6\hsize}
 \begin{center}
\begin{picture}(0,6)
 \multiput(1,0.55)(4,0){5}{\line(1,0){3}}
 \multiput(-23,3.05)(4,0){5}{\line(1,0){3}}
 \multiput(-23,-1.95)(4,0){5}{\line(1,0){3}}
 \multiput(-14,3.05)(0.3,0){20}{\line(1,0){0.15}}
 \multiput(-14,-1.95)(0.3,0){20}{\line(1,0){0.15}}
 \multiput(10,0.55)(0.3,0){20}{\line(1,0){0.15}}
 \qbezier(0.2,0.6)(0.2,0.6)(-3.1,2.8)
 \qbezier(0.2,0.5)(0.2,0.5)(-3.1,-1.7)
 \qbezier(-23.8,3.1)(-23.8,3.1)(-27.1,4.3)
 \qbezier(-23.8,3)(-23.8,3)(-27.1,1.8)
 \qbezier(-23.8,-1.9)(-23.8,-1.9)(-27.1,-0.7)
 \qbezier(-23.8,-2)(-23.8,-2)(-27.1,-3.2)
 \put(-40,0){$({\rm I\hspace{-.1em}V_{i,j}})$}

 \put(-28,4){$\circ$}
 \put(-28,1){$\circ$}
  \put(-24,4){{\tiny $1$}}
 \put(-24,2.5){$\circ$}
  \put(-20,4){{\tiny $2$}}
 \put(-20,2.5){$\circ$}
  \put(-16,4){{\tiny $3$}}
 \put(-16,2.5){$\circ$}
 \put(-12,2.5){}
  \put(-8,4){{\tiny $i-1$}}
 \put(-8,2.5){$\circ$}
  \put(-4,4){{\tiny $i$}}
 \put(-4,2.5){$\circ$}
  \put(0,1.5){{\tiny $i+1$}}
 \put(0,0){$\circ^{\prime}$}

 \put(-28,-1){$\circ$}
 \put(-28,-4){$\circ$}
  \put(-24,-1){{\tiny $1$}}
 \put(-24,-2.5){$\circ$}
  \put(-20,-1){{\tiny $2$}}
 \put(-20,-2.5){$\circ$}
  \put(-16,-1){{\tiny $3$}}
 \put(-16,-2.5){$\circ$}
 \put(-12,-2.5){}
  \put(-8,-1){{\tiny $i-1$}}
 \put(-8,-2.5){$\circ$}
  \put(-4,-1){{\tiny $i$}}
 \put(-4,-2.5){$\circ$}

  \put(4,1.5){{\tiny $1$}}
 \put(4,0){$\circ$}
  \put(8,1.5){{\tiny $2$}}
 \put(8,0){$\circ$}
 \put(12,0){}
  \put(16,1.5){{\tiny $j-1$}}
 \put(16,0){$\circ$}
 \put(20,0){$e$}
 \end{picture}
 \end{center}
 \end{minipage}

\end{tabular}
\end{center}
\end{figure}

\setlength\unitlength{0.2cm}
\begin{figure}[H]
\begin{center}
\begin{tabular}{c}

 \begin{minipage}{0.6\hsize}
 \begin{center}
\begin{picture}(0,6)
 \multiput(-27,0.55)(4,0){5}{\line(1,0){3}}
 \multiput(-19,0.55)(0.3,0){20}{\line(1,0){0.15}}
 \qbezier(-27.8,0.6)(-27.8,0.6)(-31.1,2.8)
 \qbezier(-27.8,0.5)(-27.8,0.5)(-31.1,-1.7)
 \qbezier(-27.5,0.3)(-27.5,0.3)(-27.5,-3.1)
 \put(-40,0){$({\rm V_{j}})$}
 \put(-32,-2.5){$\circ$}
 \put(-32,2.5){$\circ$}
 \put(-28,-4){$e$}
 \put(-28,0){$\circ^{\prime}$}
  \put(-24,1.5){{\tiny $1$}}
 \put(-24,0){$\circ$}
  \put(-20,1.5){{\tiny $2$}}
 \put(-20,0){$\circ$}
 \put(-16,0){}
  \put(-12,1.5){{\tiny $j-1$}}
 \put(-12,0){$\circ$}
 \put(-8,0){$e$}

 \multiput(13,0.55)(4,0){5}{\line(1,0){3}}
 \multiput(21,0.55)(0.3,0){20}{\line(1,0){0.15}}
 \qbezier(12.2,0.6)(12.2,0.6)(8.9,2.8)
 \qbezier(12.2,0.5)(12.2,0.5)(8.9,-1.7)
 \put(0,0){$({\rm V\hspace{-.1em}I_{j}})$}
 \put(8,-2.5){$e$}
 \put(8,2.5){$e$}
 \put(12,0){$\circ$}
  \put(16,1.5){{\tiny $1$}}
 \put(16,0){$\circ$}
  \put(20,1.5){{\tiny $2$}}
 \put(20,0){$\circ$}
 \put(24,0){}
  \put(28,1.5){{\tiny $j-1$}}
 \put(28,0){$\circ$}
 \put(32,0){$e$}
 \end{picture}
 \end{center}
 \end{minipage}

\end{tabular}
\end{center}
\end{figure}

\setlength\unitlength{0.2cm}
\begin{figure}[H]
\begin{center}
\begin{tabular}{c}

 \begin{minipage}{0.6\hsize}
 \begin{center}
\begin{picture}(0,14)
 \multiput(-23.5,-8)(0,3){7}{\line(0,1){2.2}}
 \put(-40,0){$({\rm V\hspace{-.1em}I\hspace{-.1em}I_{0}})$}
 \put(-24,0){$e$}
 \put(-24,3){$\circ$}
 \put(-24,6){$\bullet$}
 \put(-28,6){$\circ$}
 \put(-27.2,6.5){\line(1,0){3.2}}
 \put(-24,9){$\circ$}
 \put(-24,12){$\bullet$}
 \put(-24,-3){$\circ$}
 \put(-24,-6){$\bullet$}
 \put(-24,-9){$\circ$}

\multiput(8.5,-8)(0,3){6}{\line(0,1){2.2}}
 \put(-8,0){or}
 \put(8,0){$e$}
 \put(8,3){$\circ$}
 \put(8,6){$\bullet$}
 \put(8,9){$\circ$}
 \put(8,-3){$\circ$}
 \put(8,-6){$\bullet$}
 \put(8,-9){$\circ$}
 \end{picture}
 \end{center}
 \end{minipage}

\end{tabular}
\end{center}
\end{figure}

\setlength\unitlength{0.2cm}
\begin{figure}[H]
\begin{center}
\begin{tabular}{c}

 \begin{minipage}{0.6\hsize}
 \begin{center}
\begin{picture}(0,14)
 \multiput(-31,0.55)(4,0){5}{\line(1,0){3}}
 \multiput(-31.5,-8)(0,3){6}{\line(0,1){2.2}}
 \multiput(-23,0.55)(0.3,0){20}{\line(1,0){0.15}}
 \put(-40,0){$({\rm V\hspace{-.1em}I\hspace{-.1em}I_{j}})$}
 \put(-32,0){$\bullet$}
 \put(-32,3){$\circ$}
 \put(-32,6){$\bullet$}
 \put(-32,9){$\circ$}
 \put(-32,-3){$\circ$}
 \put(-32,-6){$\bullet$}
 \put(-32,-9){$\circ$}
  \put(-28,1.5){{\tiny $1$}}
 \put(-28,0){$\circ^{\prime}$}
  \put(-24,1.5){{\tiny $2$}}
 \put(-24,0){$\circ$}
 \put(-20,0){}
  \put(-16,1.5){{\tiny $j-1$}}
 \put(-16,0){$\circ$}
 \put(-12,0){$e$}

 \multiput(13,0.55)(4,0){4}{\line(1,0){3}}
 \multiput(17,0.55)(0.3,0){20}{\line(1,0){0.15}}
 \qbezier(12.2,0.6)(12.2,0.6)(8.9,2.8)
 \qbezier(12.2,0.5)(12.2,0.5)(8.9,-1.7)
 \put(0,0){$({\rm V\hspace{-.1em}I\hspace{-.1em}I\hspace{-.1em}I_{j}})$}
 \put(8,-2.5){$\circ^{\prime}$}
 \put(8,2.5){$\circ^{\prime}$}
  \put(12,1.5){{\tiny $1$}}
 \put(12,0){$\circ$}
  \put(16,1.5){{\tiny $2$}}
 \put(16,0){$\circ$}
 \put(20,0){}
  \put(24,1.5){{\tiny $j-1$}}
 \put(24,0){$\circ$}
 \put(28,0){$e$}
 \end{picture}
 \end{center}
 \end{minipage}

\end{tabular}
\end{center}
\end{figure}

\setlength\unitlength{0.2cm}
\begin{figure}[H]
\begin{center}
\begin{tabular}{c}

 \begin{minipage}{0.6\hsize}
 \begin{center}
\begin{picture}(0,8)
 \multiput(-31,0.55)(4,0){2}{\line(1,0){3}}
 \put(-40,0){$({\rm I\hspace{-.1em}X})$}
 \put(-32,0){$e$}
 \put(-28,0){$e$}
 \put(-24,0){$e$}

 \multiput(-7,0.55)(4,0){3}{\line(1,0){3}}
 \put(-16,0){$({\rm X})$}
 \put(-8,0){$\bullet$}
 \put(-4,0){$\circ$}
 \put(0,0){$\circ^{\prime}$}
 \put(4,0){$\circ^{\prime}$}

 \multiput(21,0.55)(4,0){3}{\line(1,0){3}}
 \put(12,0){or}
 \put(20,0){$\circ$}
 \put(24,0){$\bullet$}
 \put(28,0){e}
 \put(32,0){$\circ^{\prime}$}
 \end{picture}
 \end{center}
 \end{minipage}

\end{tabular}
\end{center}
\end{figure}

\setlength\unitlength{0.2cm}
\begin{figure}[H]
\begin{center}
\begin{tabular}{c}

 \begin{minipage}{0.6\hsize}
 \begin{center}
\begin{picture}(0,8)
 \multiput(-31,0.55)(4,0){2}{\line(1,0){3}}
 \put(-40,0){$({\rm X\hspace{-.1em}I})$}
 \put(-32,0){$\bullet$}
 \put(-28,0){$e$}
 \put(-24,0){$\circ$}
 \end{picture}
 \end{center}
 \end{minipage}

\end{tabular}
\end{center}
\end{figure}
\noindent where the symbol $\bullet$ is a $(-2)$-curve contained in ${\rm Fix}(\widetilde{\sigma})$, the symbol $\circ$ is a $(-2)$-curve not contained in ${\rm Fix}(\widetilde{\sigma})$, the symbol $\circ^{\prime}$ is a $(-3)$-curve not contained in ${\rm Fix}(\widetilde{\sigma})$, and the symbol $e$ is an effective divisor. 
\end{thm}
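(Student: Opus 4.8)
The plan is to obtain the configuration of each fiber $F$ of $f$ by transporting the configurations of the total transform of $\Gamma$ on $\widehat{W}$, established in the preceding lemma, through the double covering and then down to the relatively minimal model $S$. Concretely, I would first form the double cover $\widehat{\theta}\colon\widehat{S}\to\widehat{W}$ branched over $\widehat{R}$ and analyze it curve by curve along the dual graphs listed in the lemma, and then contract the resulting $(-1)$-curves to reach $S$, reading off $F$ at the end.

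The heart of the argument is a local dictionary describing how the double cover acts on each decorated curve $C$ occurring in the lemma, which follows from \eqref{cyceq} with $n=2$ together with the projection formula. A component $C\subset\widehat{R}$ has a single reduced preimage $\widehat{C}\cong C$ lying in $\mathrm{Fix}(\widetilde{\sigma})$ with $\widehat{C}^2=\frac12 C^2$; a curve $C\not\subset\widehat{R}$ with $C\cdot\widehat{R}=0$ splits into two disjoint copies, each isomorphic to $C$ with unchanged self-intersection and off the fixed locus; and a curve $C\not\subset\widehat{R}$ with $C\cdot\widehat{R}>0$ pulls back to a connected curve with self-intersection $2C^2$, again off the fixed locus. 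This already matches the symbols: the $(-4)$-curves $\bullet''$ in $\widehat{R}$ become the $(-2)$-curves $\bullet$ in the fixed locus, while the off-branch curves $\circ$ and $z$ feed into the curves $\circ,\circ'$ not in the fixed locus, in agreement with the correspondence between $(-k)$-curves in $\mathrm{Fix}(\widetilde{\sigma})$ and $(-2k)$-curves in the branch locus recalled in \S1.

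Second, I would pass from the smooth model $\widetilde{S}$ to $S$ by the birational morphism $\rho$, which contracts exactly the $(-1)$-curves lying in $\mathrm{Fix}(\widetilde{\sigma})$. Every $(-2)$-curve $\bullet$ contained in $\widehat{R}$ produces such a $(-1)$-curve, so it is blown down; each contraction raises by one the self-intersection of every neighbour it meets, and one checks in each of the finitely many cases that these contractions convert the computed self-intersections into the values $-2$ and $-3$ recorded in the statement. The images of the $(-1)$-curves $x$ at the ends of the chains, together with the multiple-fibre and remaining vertical pieces, are gathered into the effective divisors denoted $e$, which is precisely why the endpoints of the dual graphs in the lemma are replaced by $e$ in the theorem. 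Carrying this out for each configuration of types $(\mathrm{I})$--$(\mathrm{XI})$ yields the asserted list.

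The step I expect to be the main obstacle is the treatment of the $(-3)$-curves $\bullet'$ and $\omega'$ contained in $\widehat{R}$, for which the halving $\widehat{C}^2=\frac12 C^2$ is not an integer; this reflects the fact that $\widehat{R}$ is only the branch locus after the \emph{even} resolution of essential singularities and still carries negligible singularities lying on or meeting these curves, so the plain formula does not apply. I would circumvent this by performing the computation one level higher, on $\widetilde{W}$, where $\widetilde{R}$ is genuinely smooth: blowing up the negligible singularity on such a curve turns its proper transform into an even self-intersection curve, after which \eqref{cyceq} applies and returns an integral answer, and the extra exceptional data is absorbed into $e$. The remaining point---that the enumeration is exhaustive and that no further degeneration escapes the list---is guaranteed by the classification of admissible sequences of abstract essential singularity diagrams carried out in the steps (i)--(iii) above.
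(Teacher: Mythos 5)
Your proposal is correct and follows essentially the same route as the paper, which derives the theorem from the preceding lemma simply by ``taking double covering'' and contracting; your curve-by-curve dictionary for the double cover and the contraction of the $(-1)$-curves in $\mathrm{Fix}(\widetilde{\sigma})$ is just a fleshed-out version of that step. Your observation that the odd self-intersection components of $\widehat{R}$ must be handled on $\widetilde{W}$, where $\widetilde{R}$ is smooth, correctly supplies a detail the paper leaves implicit.
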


Let $f\colon S\to B$ be a hyperelliptic fibered surface of genus $3$.
Recall the slope equality in Theorem~\ref{slopeeq} for $g=3$ and $n=2$:
$$
K_f=\frac{8}{3}\chi_f+{\rm Ind},
$$
where ${\rm Ind}=\sum_{p\in B}{\rm Ind}(F_p)$ is defined by ${\rm Ind}(F_p)=\alpha_2(F_p)+\varepsilon(F_p)$, which is called the Horikawa index of $F_p$.
Computing the Horikawa index for a fiber $F_p$ of each type,
we get:
\begin{thm} \label{g3hypthm}
The Horikawa index of a hyperelliptic fibration of genus $3$ is given by
\begin{eqnarray*}
{\rm Ind}&=&\sum_{i} \frac{2}{3}i\nu({\rm I}_{i,0,0})+\sum_{i,k\ge 1} \left(\frac{2}{3}i+\frac{5}{3}k-1\right)\nu({\rm I}_{i,0,k})+\sum_{i} \left(\frac{2}{3}i+\frac{5}{3}\right)\nu({\rm I}_{i,0,\infty})\\
&+&\sum_{i,j\ge 1,k\ge 1} \left(\frac{2}{3}i+\frac{5}{3}(j+k)-2\right)\nu({\rm I}_{i,j,k})+\sum_{i,j\ge 1} \left(\frac{2}{3}i+\frac{5}{3}j+\frac{2}{3}\right)\nu({\rm I}_{i,j,\infty})\\
&+&\sum_{i} \left(\frac{2}{3}i+\frac{10}{3}\right)\nu({\rm I}_{i,\infty,\infty})+\sum_{i,j,k} \left(\frac{2}{3}i+\frac{5}{3}(j+k)\right)\nu({\rm I\hspace{-.1em}I}_{i,j,k})\\
&+&\sum_{i,j} \left(\frac{2}{3}i+\frac{5}{3}j+\frac{8}{3}\right)\nu({\rm I\hspace{-.1em}I\hspace{-.1em}I}_{i,j})+\sum_{i,j} \left(\frac{2}{3}i+\frac{5}{3}j+\frac{4}{3}\right)\nu({\rm I\hspace{-.1em}V}_{i,j})\\
&+&\sum_{j} \left(\frac{5}{3}j+\frac{4}{3}\right)\nu({\rm V}_{j})+\sum_{j} \left(\frac{5}{3}j+\frac{5}{3}\right)\nu({\rm V\hspace{-.1em}I}_{j})\\
&+&\frac{4}{3}\nu({\rm V\hspace{-.1em}I\hspace{-.1em}I}_{0})+\sum_{j\ge 1} \left(\frac{5}{3}j+\frac{1}{3}\right)\nu({\rm V\hspace{-.1em}I\hspace{-.1em}I}_{j})+\sum_{j\ge 1} \left(\frac{5}{3}j+\frac{2}{3}\right)\nu({\rm V\hspace{-.1em}I\hspace{-.1em}I\hspace{-.1em}I}_{j})\\
&+&\frac{4}{3}\nu({\rm I\hspace{-.1em}X})+\frac{7}{3}\nu({\rm X})+\frac{10}{3}\nu({\rm X\hspace{-.1em}I})\\
\end{eqnarray*}
where $\nu(*)$ denotes the number of fibers of type $(*)$.
\end{thm}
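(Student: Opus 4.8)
The plan is to reduce Theorem~\ref{g3hypthm} to a finite, type-by-type evaluation of the two fiber invariants $\alpha_2(F_p)$ and $\varepsilon(F_p)$, reading off the stated coefficients from the classification already established in steps (i)--(iii). First I would specialize the slope equality of Theorem~\ref{slopeeq} to $g=3$, $n=2$, so that $r=8$ and $(2n-1)r-3n=18$. A direct substitution shows that the coefficient $n\bigl(\tfrac{(n+1)(n-1)(r-nk)k}{(2n-1)r-3n}-1\bigr)$ of $\alpha_k(F_p)$ vanishes for $k=1$ and $k=3$ and equals $\tfrac23$ for $k=2$, while $\alpha_k(F_p)=0$ for $k\ge4$ because no point of multiplicity $\ge 8$ occurs in genus $3$ (the largest multiplicity, occurring only on a fiber of type $({\rm III})$, is $6$, which feeds $\alpha_3$). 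Hence the per-fiber Horikawa index collapses to
$$\mathrm{Ind}(F_p)=\tfrac23\alpha_2(F_p)+\varepsilon(F_p),$$
and, since a general fiber has $\alpha_2=\varepsilon=0$, the global index is $\mathrm{Ind}=\sum_{*}\mathrm{Ind}(*)\,\nu(*)$, the sum ranging over the twelve types with their indices.

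The second step computes $\alpha_2(F_p)$, which is immediate from the diagrams. By Definition~\ref{sinddef}, $\alpha_2(F_p)$ is the number of singular points of $R$ (infinitely near ones included) of multiplicity $4$ or $5$ over $p$; equivalently it is the number of entries equal to $4$ or $5$ in the essential singularity diagram of $\Gamma_p$ and of the exceptional curves to which it forks. Running through the classification, a type $({\rm I})$ fiber contributes $i$ from its multiplicity-$5$ points and one $4$ per step of the two terminal $4$-$(3$-$4)$ or $34$ towers, giving $\alpha_2=i+j+k$ (with the conventions $j,k=0,\infty$); a type $({\rm II})$ fiber gives $\alpha_2=i+j+k$ from the multiplicity-$4$ points and the two $(3$-$4)$ ends; the remaining types are handled identically by reading off their $4$'s and $5$'s from the diagrams in (i). The multiplicity-$3$ entries feed only $\alpha_1$ and the multiplicity-$6$ entry of type $({\rm III})$ only $\alpha_3$, so both are invisible to $\mathrm{Ind}(F_p)$.

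The third and decisive step computes $\varepsilon(F_p)$, the number of $(-1)$-curves contracted by $\rho$, i.e.\ the number of vertical $(-2)$-curves contained in $\widetilde R$. Here I would pass to the even resolution $\widehat\psi\colon\widehat W\to W$ of essential singularities and use the configuration lemma listing the total transform of each $\Gamma$ on $\widehat W$ with its decorated dual graph. A $(-2)$-curve lying in $\widehat R$ and disjoint from the rest of $\widehat R$ (the symbol $\bullet$) pulls back under $\widetilde\theta$ to a $(-1)$-curve in $\mathrm{Fix}(\widetilde\sigma)$ and is precisely what $\rho$ blows down, whereas a $(-2)$-curve meeting $\widehat R$ (the symbol $\omega$) or disjoint from it ($\circ$, $z$) does not; thus $\varepsilon(F_p)$ equals the number of $\bullet$-curves in the configuration. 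Counting these type by type gives, for the chain types $({\rm I})$ and $({\rm II})$, $\varepsilon=\alpha_1$ (each multiplicity-$3$ node of a $(3$-$4)$ block produces one $\bullet$), while the extra additive constants $\tfrac43,\tfrac53,\dots$ recorded in the theorem for types $({\rm III})$--$({\rm XI})$ arise from the additional $\bullet$-curves created around the multiplicity-$6$ point and at the special ends. Substituting the two counts into $\mathrm{Ind}(F_p)=\tfrac23\alpha_2(F_p)+\varepsilon(F_p)$ reproduces the coefficient attached to each $\nu(*)$, and summing over $p\in B$ yields the asserted formula.

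The main obstacle is this last bookkeeping of $\varepsilon$: one must verify the decorated dual graphs of the configuration lemma and keep the $\bullet/\omega/\circ/z$ distinction straight while running the even resolution and the double cover for the degenerate and multiple-fiber configurations, most delicately type $({\rm III})$ with its isolated multiplicity-$6$ point and the special types $({\rm VII})$--$({\rm XI})$, where the correspondence between vertical $(-2)$-curves of $\widehat R$ and contracted $(-1)$-curves must be checked carefully rather than inferred from the generic chain pattern. Granting the configuration lemma and the configuration theorem for $F$ already displayed, the determination of $\alpha_2$ is wholly mechanical and the remaining assembly of $\mathrm{Ind}$ is a finite computation.
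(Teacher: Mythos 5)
Your proposal is correct and follows essentially the same route as the paper: specialize the slope equality of Theorem~\ref{slopeeq} to $(g,n)=(3,2)$, $r=8$ so that only $\alpha_2$ and $\varepsilon$ survive in $\mathrm{Ind}(F_p)$, and then evaluate these two invariants type by type from the classification of essential singularity diagrams and the configuration lemma, exactly as the paper does (it likewise leaves the final bookkeeping of $\varepsilon$ implicit). One remark in your favor: your coefficient $\tfrac23$ on $\alpha_2(F_p)$ is the correct one --- it is what Theorem~\ref{slopeeq} gives and what the displayed coefficients such as $\tfrac23 i$ for $({\rm I}_{i,0,0})$ and $\tfrac43$ for $({\rm I\hspace{-.1em}X})$ require --- whereas the paper's prose ``${\rm Ind}(F_p)=\alpha_2(F_p)+\varepsilon(F_p)$'' is evidently a misprint for $\tfrac23\alpha_2(F_p)+\varepsilon(F_p)$.
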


\section{Multiple fibers}

For any fiber $F$ of a fibered surface, the intersection form on the support of $F$ is negative semi-definite (with $1$-dimensional kernel) by Zariski's lemma.
Hence, we have the smallest non-zero effective divisor $D$ with support in $\mathrm{Supp}(F)$ such that $DC=0$ holds for any irreducible component 
$C$ of $F$.
We call $D$ the numerical cycle.
There exists a positive integer $k$ such that $F=kD$.
When $k>1$, we call $F$ a multiple fiber of multiplicity $k$.

The following lemma is easy to prove.
\begin{lem} \label{easylem}
Let $n\ge 4$ be a positive integer and $a$, $b$ integers such that ${\rm gcd}(a,b,n)=1$. Then, it follows that $a+2b\notin n\mathbb{Z}$ or $2a+b\notin n\mathbb{Z}$.
\end{lem}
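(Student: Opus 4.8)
The plan is to argue by contradiction: suppose that \emph{both} $a+2b\in n\mathbb{Z}$ and $2a+b\in n\mathbb{Z}$, and derive a contradiction with $n\ge 4$ using the coprimality hypothesis. Working modulo $n$, the two assumed congruences are $a+2b\equiv 0$ and $2a+b\equiv 0$, so the combinations obtained by adding and subtracting them should already contain all the arithmetic we need. Subtracting the first from the second gives $a-b\equiv 0 \pmod n$, that is $a\equiv b\pmod n$. Substituting $b\equiv a$ back into $a+2b\equiv 0$ then yields $3a\equiv 0\pmod n$ (and symmetrically $3b\equiv 0\pmod n$).

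The key remaining step is to upgrade the hypothesis $\gcd(a,b,n)=1$ to $\gcd(a,n)=1$. First I would observe that since $a\equiv b\pmod n$ we may write $b=a+tn$ for some integer $t$; any common divisor of $a$ and $n$ then divides $b$ as well, so the common divisors of $\{a,n\}$ coincide with those of $\{a,b,n\}$, whence $\gcd(a,n)=\gcd(a,b,n)=1$. Now $n\mid 3a$ together with $\gcd(a,n)=1$ forces $n\mid 3$, which is impossible for $n\ge 4$. This contradiction shows that at least one of $a+2b$, $2a+b$ lies outside $n\mathbb{Z}$, proving the lemma.

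There is essentially no serious obstacle here; the only point requiring a moment of care is the reduction $\gcd(a,b,n)=1\Rightarrow\gcd(a,n)=1$, which relies crucially on the congruence $a\equiv b\pmod n$ derived in the first step rather than on coprimality alone. (One could alternatively phrase the whole argument additively: if both quantities are divisible by $n$, then so is their sum $3(a+b)$ and their difference $a-b$; but passing through $a\equiv b$ and $3a\equiv 0$ keeps the bookkeeping cleanest.) I would present the final write-up in two or three lines once the two congruences and the gcd reduction are in place.
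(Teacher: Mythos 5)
Your argument is correct and complete: assuming both $a+2b\equiv 0$ and $2a+b\equiv 0 \pmod n$, subtracting gives $a\equiv b\pmod n$, which both upgrades $\gcd(a,b,n)=1$ to $\gcd(a,n)=1$ and reduces the first congruence to $3a\equiv 0\pmod n$, forcing $n\mid 3$ and contradicting $n\ge 4$. The paper itself offers no proof (it only remarks that the lemma ``is easy to prove''), so there is nothing to compare against; your write-up is the natural argument and fills that gap correctly, with the gcd reduction handled properly.
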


From Lemma \ref{easylem}, we obtain the following assertion on multiple fibers of $f$.

\begin{prop} \label{multfiberprop}
Let $f\colon S\rightarrow B$ be a primitive cyclic covering fibration of type $(g,0,n)$. 
If $n\le 3$, then any multiple fiber of $f$ is an $n$-fold fiber. 
If $n\ge 4$, then $f$ has no multiple fibers.
\end{prop}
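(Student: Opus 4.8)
The plan is to reduce the multiplicity of a fiber to a gcd of coefficients upstairs, and then to extract the obstruction from the congruences governing the branch locus. First I would pass to $\widetilde S$. Since $\rho\colon\widetilde S\to S$ is a composition of blow-ups and $\widetilde f=f\circ\rho$, and since the gcd of the coefficients of a fiber is unchanged by blowing up a point of that fiber (the new exceptional curve acquires the multiplicity of the fiber at the centre, which is a non-negative combination of the old coefficients and hence a multiple of their gcd), the multiplicity of $F_p$ equals the multiplicity of $\widetilde F_p=\widetilde\theta^{*}\widetilde\Gamma_p$. Writing $\widetilde\Gamma_p=\sum_C a_C C$ for the fiber of $\widetilde\varphi$ over $p$ (a tree of rational curves with $a_{\widehat\Gamma_p}=1$ for the proper transform of $\Gamma_p$), the covering $\widetilde\theta$ is totally ramified over the components $C\subset\widetilde R$ and \'etale in codimension one over the others, so by \eqref{cyceq} the coefficient of a component of $\widetilde\theta^{*}\widetilde\Gamma_p$ lying over $C$ is $a_C$ if $C\not\subset\widetilde R$ and $na_C$ if $C\subset\widetilde R$. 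Hence the multiplicity of $F_p$ is
$$
\gcd\bigl(\{a_C : C\not\subset\widetilde R\}\cup\{na_C : C\subset\widetilde R\}\bigr).
$$

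The first reduction is then immediate: if $\Gamma_p\not\subset R$, then $\widehat\Gamma_p\not\subset\widetilde R$ is a component with $a_{\widehat\Gamma_p}=1$, so the gcd is $1$ and $F_p$ is not multiple. I would therefore assume $\Gamma_p\subset R$; then $\widehat\Gamma_p\subset\widetilde R$ contributes $n$, so the multiplicity $k$ divides $n$, and $k$ divides $a_C$ for every non-branch component $C$. In other words $k=\gcd\bigl(n,\{a_C:C\not\subset\widetilde R\}\bigr)$, so $F_p$ is an $n$-fold fiber exactly when every non-branch component has coefficient divisible by $n$, while it is a non-trivial multiple fiber exactly when this gcd exceeds $1$. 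Everything thus comes down to controlling, modulo $n$, the coefficients of the non-branch components of the blow-up tree.

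The heart of the argument is this arithmetic. The coefficients $a_C$ are computed along the tree: following the proper transform of a curve in its singularity diagram they increase by the multiplicity of that curve, and at a fork point they add, so the small coefficients $1,2,\dots$ sitting near $\widehat\Gamma_p$ force the relevant congruences. Whether a given exceptional curve lies in $\widetilde R$ is dictated by Lemma~\ref{multlem}$(1)$ (its centre has multiplicity $\equiv 0$ or $1\pmod n$) together with the propagation rules of Lemmas~\ref{mslem}, \ref{mslem2} and \ref{mijlem}, while the tangency cases (ii.2), (ii.3) of \S2 produce intersections of local multiplicity $2$ in the tree. Combining the relation $\widetilde\Gamma_p\cdot E=0$ (Zariski) with the $n$-divisibility $\widetilde R\sim n\widetilde{\mathfrak d}$ at a non-branch curve $E$ meeting two components of coefficients $a,b$, one transversally and one tangentially, yields a congruence of shape $a+2b\equiv 0\pmod n$, and a neighbouring vertex gives $2a+b\equiv 0\pmod n$. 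Here Lemma~\ref{easylem} enters: for $n\ge 4$ these two congruences are incompatible with $\gcd(a,b,n)=1$, so tracing back through the tree one can always locate a non-branch component whose coefficient is prime to $n$, forcing $k=1$; for $n\le 3$ the two congruences degenerate (for $n=3$ to $a\equiv b\pmod 3$) and are satisfiable with $\gcd(a,b,n)=1$, which is exactly what leaves room for multiple fibers and forces any surviving one to be $n$-fold.

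I expect the main obstacle to be the global bookkeeping in this last step: propagating the residues $a_C\bmod n$ and the branch indicators through an arbitrary tree with nested forks and tangencies, and verifying that the two-congruence configuration to which Lemma~\ref{easylem} applies genuinely appears whenever all the small non-branch coefficients would have to share a common prime $p\mid n$. The borderline cases $n=2,3$, where Lemma~\ref{easylem} truly fails, must then be checked by hand to confirm that the only possibility is an $n$-fold fiber, in agreement with the type-$({\rm I\hspace{-.1em}I\hspace{-.1em}I})$ double fibers and the type-$({\rm V\hspace{-.1em}I\hspace{-.1em}I\hspace{-.1em}I})$ triple fibers constructed earlier.
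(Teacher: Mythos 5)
Your reduction is sound and in places more careful than the paper's own argument: the identification of the multiplicity with the gcd of the coefficients of $\widetilde{F}_p$, its invariance under $\rho$, the formula $k=\gcd\bigl(n,\{a_C: C\not\subset\widetilde{R}\}\bigr)$ when $\Gamma_p\subset R$, and the consequences that $k\mid n$ (hence $k=n$ when $n=2,3$) and that $F_p$ is never multiple when $\Gamma_p\not\subset R$, are all correct. The genuine gap is in the case $n\ge 4$, which is the entire content of the second assertion, and you have in effect flagged it yourself. The mechanism you propose for producing the congruences $a+2b\equiv 0$ and $2a+b\equiv 0\pmod n$ does not work: $\widetilde{\Gamma}_p$ is the total transform of a smooth rational fiber under point blow-ups, so its components form a simple normal crossing tree with no tangencies between them, and Zariski's relation at a component $E$ only gives $(-E^2)\,a_E=\sum_{\mathrm{nbrs}}a_C$, not anything of the shape $a+2b\equiv 0\pmod n$. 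In the paper these quantities arise purely combinatorially: since the components of $\widetilde{R}$ are pairwise disjoint, any two adjacent components of the fiber whose coefficients are both $\not\equiv 0$ must have had their intersection point blown up, the new exceptional curve carries coefficient $a+b$, and iterating produces $2a+b$ and $a+2b$; Lemma~\ref{easylem} then guarantees that some new adjacent pair again violates disjointness, so the process can never terminate. That non-terminating induction (one must also track that the new pair $(a',b')$ still satisfies $\gcd(a',b',n)=1$ and $a'+b'\not\equiv 0$, which is exactly where the lemma is needed) is the heart of the proof, and it is precisely the ``global bookkeeping'' you defer; without it the $n\ge 4$ case is not proved.

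A second, subtler point that you half-notice but do not resolve: since you only establish $k\mid n$, a multiple fiber requires all non-branch coefficients to be divisible by some prime $p\mid n$, so the combinatorial obstruction must be run modulo $p$ (equivalently modulo $k$), not modulo $n$. Lemma~\ref{easylem} and the non-termination argument are statements modulo $n$; modulo $2$ the chain with coefficients $1$--$2$--$1$ already satisfies the disjointness condition, so pure combinatorics cannot exclude a double fiber for, say, $n=4$, and one must invoke the arithmetic constraint of Lemma~\ref{multlem} (every blown-up point of $R$ has multiplicity $\equiv 0$ or $1\pmod n$) to rule such configurations out. The paper's own proof sidesteps this by asserting $k=n$ where only $k\mid n$ follows, so your write-up inherits rather than repairs this weakness; if you intend to argue via $k=\gcd\bigl(n,\{a_C\}\bigr)$ as you do, you must either justify $k=n$ or carry out the obstruction prime by prime.
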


\begin{proof}
It is sufficient to show the claim with respect to $\widetilde{f}$. Suppose that $\widetilde{F}=\widetilde{F}_p$ is a multiple fiber of $\widetilde{f}$. 
We write $\widetilde{F}=kD$ where $k\ge 2$ and $D$ is the numerical cycle. 
Let $\widetilde{\Gamma}=\widetilde{\Gamma}_p$ be the fiber of $\widetilde{\varphi}$ at $p=\widetilde{f}(\widetilde{F})$. 
We write $\widetilde{\Gamma}=L_1+\cdots+L_s$ and $L_i=k_i\Gamma_i$ where $\Gamma_i\simeq\mathbb{P}^1$ and $\Gamma_i\neq\Gamma_j$ if $i\neq j$. 
At least two $k_i$'s are $1$ since $\widetilde{\Gamma}$ is the total transform of a fiber $\Gamma\simeq\mathbb{P}^1$ of $\varphi$. 
We may assume $k_1=k_2=1$. 
The numerical cycle $D$ is decomposed to $D_1+\cdots+D_s$ such that $\widetilde{\theta}(D_i)=\Gamma_i$. 
Since $\widetilde{\theta}^\ast \widetilde{\Gamma}=\widetilde{F}$, it follows $\widetilde{\theta}^\ast L_i=kD_i$. 
In particular, we have $\widetilde{\theta}^\ast \Gamma_1=kD_1$. Thus, $\Gamma_1$ is contained in $\widetilde{R}$ since $k\ge 2$. 
Hence it follows $k=n$. Suppose that $\Gamma_i$ is not contained in $\widetilde{R}$. 
Since $\widetilde{\theta}^\ast \Gamma_i$ is reduced and $nD_i=k_i\widetilde{\theta}^\ast \Gamma_i$, then it follows $k_i\in n\mathbb{Z}$. 
Hence, $\widetilde{\Gamma}$ satisfies the following $(\#)$:

\smallskip

\noindent
$(\#)$ We take an irreducible component $\Gamma_j$ such that $k_j\notin n\mathbb{Z}$ arbitrarily. If another component $\Gamma_i$ intersects $\Gamma_j$, then it follows that $k_i\in n\mathbb{Z}$,

\smallskip

\noindent
since $\widetilde{R}$ consists of smooth disjoint curves. However, we can show that there exist no reducible fibers of $\widetilde{\varphi}$ which satisfy $(\#)$ if $n\ge 4$. This can be shown as follows. Let $\Gamma$ be the fiber of $\varphi$ at $p$. If $\widetilde{\Gamma}$ is reducible, $\Gamma$ is blown up by $\widetilde{\psi}$ at least once. we may assume that $\psi_1$,\ldots,$\psi_{s-1}$ are blow-ups at a point on the fiber at $p$. Put $\widetilde{\Gamma}_i=(\psi_i\circ\cdots\circ\psi_1)^\ast \Gamma=L_1+\cdots+L_{i+1}$ where we identify $L_k$ with the proper transform of $L_k$. Then, we have $\widetilde{\Gamma}_1=L_1+L_2=\Gamma_1+\Gamma_2$. Since  $\widetilde{\Gamma}$ satisfies $(\#)$, the intersection point of $\Gamma_1$ and $\Gamma_2$ is blown up. Thus, we may assume $\psi_2$ is the blow-up at the point. Then, we have $\widetilde{\Gamma}_2=L_1+L_2+L_3=\Gamma_1+\Gamma_2+2\Gamma_3$. This operation repeats unless the total transform of $\Gamma$ satisfies $(\#)$. Blowing u
 p at the intersection point of $L_i=a\Gamma_i$ and $L_j=b\Gamma_j$, the multiplicity of the new exceptional curve is $a+b$. From this observation and Lemma \ref{easylem}, the operation would repeat endlessly, which leads us to a contradiction. Hence there exist no  reducible fibers of $\widetilde{\varphi}$ satisfying $(\#)$. If $\widetilde{\Gamma}\simeq\mathbb{P}^1$, then $\widetilde{F}=nD$ and $D\simeq\mathbb{P}^1$. It contradicts $g\ge 2$ of $\widetilde{f}$.
\end{proof}

\begin{rem}
In the case where $n=2$, i.e., $f$ is a hyperelliptic fibration of genus $g$, it is known that there exists a fibration $f$ with a double fiber for any odd $g$. In the case where $n=3$, we have shown in \S3 that there exists a fibration $f$ with a triple fiber.
\end{rem}

\end{document}